\UseRawInputEncoding
\documentclass[letterpaper,10pt,reqno]{amsart}
\usepackage{indentfirst} 
\usepackage{amssymb}
\usepackage{mathtools} 
\usepackage{mathabx} 
\usepackage{amsthm}
\usepackage{thmtools}
\usepackage{enumitem} 
\usepackage[colorlinks=true]{hyperref} 
\usepackage[usenames,dvipsnames]{xcolor} 
\usepackage{ifthen}
\usepackage{tikz}
\usetikzlibrary{decorations.pathreplacing}
\usepackage{tikz-cd}
\usepackage{verbatim}

\usepackage{pgfplots}
\pgfplotsset{compat=newest}

\usetikzlibrary{calc, positioning}
\usepackage{pgfplots}
\pgfplotsset{width=10cm,compat=1.9}
\usepgfplotslibrary{external}
  

\makeatletter

\def\MRbibitem{\@ifnextchar[\my@lbibitem\my@bibitem}

\def\mybiblabel#1#2{\@biblabel{{\hyperref{http://www.ams.org/mathscinet-getitem?mr=#1}{}{}{#2}}}}

\def\myhyperanchor#1{\Hy@raisedlink{\hyper@anchorstart{cite.#1}\hyper@anchorend}}

\def\my@lbibitem[#1]#2#3#4\par{%
  \item[\mybiblabel{#2}{#1}\myhyperanchor{#3}\hfill]#4%
  \@ifundefined{ifbackrefparscan}{}{\BR@backref{#3}}%
  \if@filesw{\let\protect\noexpand\immediate
    \write\@auxout{\string\bibcite{#3}{#1}}}\fi\ignorespaces%
}

\def\my@bibitem#1#2#3\par{%
  \refstepcounter\@listctr
  \item[\mybiblabel{#1}{\the\value\@listctr}\myhyperanchor{#2}\hfill]#3%
  \@ifundefined{ifbackrefparscan}{}{\BR@backref{#2}}%
  \if@filesw\immediate\write\@auxout
    {\string\bibcite{#2}{\the\value\@listctr}}\fi\ignorespaces%
}

\makeatother

\DeclareFontFamily{U} {MnSymbolA}{}
\DeclareFontShape{U}{MnSymbolA}{m}{n}{
   <-6> MnSymbolA5
   <6-7> MnSymbolA6
   <7-8> MnSymbolA7
   <8-9> MnSymbolA8
   <9-10> MnSymbolA9
   <10-12> MnSymbolA10
   <12-> MnSymbolA12}{}
\DeclareFontShape{U}{MnSymbolA}{b}{n}{
   <-6> MnSymbolA-Bold5
   <6-7> MnSymbolA-Bold6
   <7-8> MnSymbolA-Bold7
   <8-9> MnSymbolA-Bold8
   <9-10> MnSymbolA-Bold9
   <10-12> MnSymbolA-Bold10
   <12-> MnSymbolA-Bold12}{}
\DeclareSymbolFont{MnSyA} {U} {MnSymbolA}{m}{n}
 \DeclareFontFamily{U} {MnSymbolC}{}
\DeclareFontShape{U}{MnSymbolC}{m}{n}{
  <-6> MnSymbolC5
  <6-7> MnSymbolC6
  <7-8> MnSymbolC7
  <8-9> MnSymbolC8
  <9-10> MnSymbolC9
  <10-12> MnSymbolC10
  <12-> MnSymbolC12}{}
\DeclareFontShape{U}{MnSymbolC}{b}{n}{
  <-6> MnSymbolC-Bold5
  <6-7> MnSymbolC-Bold6
  <7-8> MnSymbolC-Bold7
  <8-9> MnSymbolC-Bold8
  <9-10> MnSymbolC-Bold9
  <10-12> MnSymbolC-Bold10
  <12-> MnSymbolC-Bold12}{}
\DeclareSymbolFont{MnSyC} {U} {MnSymbolC}{m}{n}

\DeclareMathSymbol{\top}{\mathord}{MnSyA}{219} 
\DeclareMathSymbol{\plus}{\mathord}{MnSyC}{20} 


\declaretheorem[numberwithin=section]{theorem}
\declaretheorem[sibling=theorem]{lemma}
\declaretheorem[sibling=theorem]{corollary}
\declaretheorem[sibling=theorem]{proposition}
\declaretheorem[sibling=theorem,style=definition]{definition}
\declaretheorem[sibling=theorem,style=remark]{example}

\declaretheorem[sibling=theorem,style=remark]{remark}

\setcounter{tocdepth}{1}    
\setcounter{secnumdepth}{3}
\hypersetup{bookmarksdepth = 3} 
\numberwithin{equation}{section}     

\setlist[enumerate,1]{label={\upshape(\alph*)},ref=\alph*}
\setlist[enumerate,2]{label={\upshape(\arabic*)},ref=\arabic*}


\newcommand{\M}{\mathcal{M}}
\newcommand{\R}{\mathbb{R}}

\newcommand{\N}{\mathbb{N}}

\newcommand{\cK}{\mathcal{K}}

\newcommand{\cS}{\mathcal{S}}



\newcommand{\Rnon}{\mathbb{R}_{\plus}}    
\newcommand{\Rpos}{\mathbb{R}_{\plus\plus}} 
\newcommand{\Mat}[2][]{\ifthenelse{\equal{#1}{}}{\R^{{#2}\times{#2}}}{\R^{{#1}\times{#2}}}}
\newcommand{\Man}[2][]{\ifthenelse{\equal{#1}{}}{\Rnon^{{#2}\times{#2}}}{\Rnon^{{#1}\times{#2}}}}
\newcommand{\Map}[2][]{\ifthenelse{\equal{#1}{}}{\Rpos^{{#2}\times{#2}}}{\Rpos^{{#1}\times{#2}}}}

\renewcommand{\epsilon}{\varepsilon}
\renewcommand{\phi}{\varphi}


\begin{document}

\title{Time change for flows and Thermodynamic formalism}
\date{\today}

\subjclass[2010]{Primary 05C80; Secondary 05C70, 05C63.}

\begin{thanks}
{The authors would like to express their gratitude to Katrin Gelfert for interesting discussions and comments around the subject of this paper. Her ideas motivated part of this work. G.I.\ was partially supported  by Proyecto Fondecyt 1150058. G.I. and I.C. were partially supported by CONICYT PIA ACT172001 }
\end{thanks}

\author{Italo Cipriano} \address{Facultad de Matem\'aticas,
Pontificia Universidad Cat\'olica de Chile (PUC), Avenida Vicu\~na Mackenna 4860, Santiago, Chile}
\email{icipriano@gmail.com }
\urladdr{http://www.icipriano.org/}

\author{Godofredo Iommi} \address{Facultad de Matem\'aticas,
Pontificia Universidad Cat\'olica de Chile (PUC), Avenida Vicu\~na Mackenna 4860, Santiago, Chile}
\email{giommi@mat.puc.cl}
\urladdr{http://www.mat.puc.cl/\textasciitilde giommi/}

\begin{abstract}
This paper is devoted to study how do thermodynamic formalism quantities varies for time changes of suspension flows defined over countable Markov shifts.
We prove that in general no quantity is preserved. We also make a topological description of the space of suspension flows according to certain thermodynamic quantities. For example, we show that the set of suspension flows  defined over the full shift on a countable alphabet having finite entropy is open. Of independent interest might be a set of analytic tools we use to construct examples  with prescribed thermodynamic behaviour.
\end{abstract}

\maketitle

\section{Introduction}

A natural question that has attracted attention for quite some time is: How do ergodic properties of a flow varies with a time change? This question has been addressed under different regularity assumptions on the flow (and the time change)  and for a wide range of ergodic properties.  For example, under certain measurability assumptions it is known that ergodicity is preserved under time changes (see \cite[Theorem 5.1]{to} or \cite[Section 5]{par2}). On the other hand, mixing and weak-mixing are not necessarily preserved. Some examples have been extensively studied, for instance it was shown by Marcus \cite[Section 3]{ma} that under mild differentiability assumptions   time changes of the horocycle flow are topologically mixing. Spectral properties of the time change of  the horocylcle flow have been studied recently in \cite{fu,ti}. For more results along this lines see \cite{afu}. In the context of Axiom A flows, Parry \cite[Theorem 3]{par} showed that there exists a time change so that the Sinai-Ruelle-Bowen measure coincides with the measure of maximal entropy, a property he called synchronization. More recently, Gelfert and  Motter \cite{gm}, for a class of smooth flows, showed that several thermodynamic formalism quantities remained essentially unchanged by suitable time changes. The purpose of the present paper is to discuss how do thermodynamic quantities varies for time changes of suspension flows defined over countable Markov shifts. We stress that in this setting the phase space is no longer compact and this yields a very different thermodynamic behaviour.

The use of suspension flows over Markov shifts as a tool to study differentiable flows has a long story, probably starting with the study of the geodesic flow over the modular surface \cite{ku}. In the early 1970's Bowen \cite{bo1} and Ratner \cite{ra} constructed Markov partitions for Axiom A flows. These were later used by Bowen and Ruelle \cite{br} to study thermodynamic formalism for such flows. In this setting the symbolic model was a suspension flow defined over a sub-shift of finite type defined on a finite alphabet. Recently,  Lima and Sarig \cite{ls} have constructed  symbolic codings for three dimensional $C^{\infty}$ flows of positive entropy defined on compact manifolds. They showed that given $\chi >0$ there is a suspension flow defined over a finite entropy countable Markov shift that captures every hyperbolic measure for which its positive Lyapunov exponent is larger than $\chi$. Other examples of flows that can be coded by suspension flows over countable Markov shifts are some classes of geodesic flows defined over non-compact manifolds of variable pinched negative curvature \cite{dp, irv} and certain Teichm\"uler flows \cite{bg, h}.

In sub-section \ref{ss:nc} we provide a wide range of examples showing that, in strong contrast to the compact setting, no thermodynamic quantity is preserved in the context of suspension flows defined over countable Markov shifts. Most of these examples are constructed making use of analytic tools developed in the study of Dirichlet series (see sub-section \ref{sb:ex}). We also establish conditions on the time change that ensure that some thermodynamic quantities are actually preserved (Theorem \ref{thm:pre}). Finally, in Section \ref{sec:sta} we topologize the space of suspension flows defined over a fixed countable Markov shift. We study topological properties of the sets of flows having finite (or infinite) entropy, those which have (or not) measures of maximal entropy and other sets having particular ergodic properties.

\section{Countable Markov shifts }
This section is devoted, on the one hand, to recall the necessary definitions and results from thermodynamic formalism for countable Markov shifts that will be used in the article. On the other, more interestingly, we provide tools to construct examples of potentials for which the pressure has prescribed properties. The tools come from the study of zeta functions and Dirichlet series in number theory.

\subsection{Countable Markov shifts}
Let $T=(t_{ij})_{\N \times \N }$ be an infinite matrix 
of zeros and ones. Let
\begin{equation*}
\Sigma=\left\{ x \in \N ^{\N} : t_{x_{i} x_{i+1}}=1 \ \text{for every $i \in \N$}\right\}.
\end{equation*}
The \emph{shift map} $\sigma:\Sigma \to \Sigma$ is defined by $(\sigma(x))_i=x_{i+1}$.  The space $\Sigma$ is equipped with
the topology generated by the cylinder sets
\begin{equation*}
C_{a_1 \cdots a_n}= \{ x\in \Sigma: x_i=a_i \ \text{for
$i=1,\ldots,n$}\}.
\end{equation*}
In general it is a non-compact space. The pair $(\Sigma, \sigma)$ is called \emph{countable Markov shift}. In what follows we will always assume it to be  a transitive dynamical system.  Given a function $ \phi \colon \Sigma \to \R$ let
\[ V_{n}(\phi):= \sup \{| \phi(x)- \phi(y)| : x,y\in \Sigma, \ x_{i}=y_{i}
\ \text{for $i=1,\ldots,n$} \},
\]
where $x=(x_0, x_1, \ldots, )$ and $y=(y_0, y_1, \ldots, )$.  We say that $\phi$ has \emph{summable variation} if $\sum_{n=1}^{\infty} V_n(\phi)<\infty$.  We also say that
$\phi$ is \emph{locally H\"older} if there exist constants $K>0$ and
$\theta\in (0,1)$ such that $V_{n}( \phi) \le K \theta^{n}$ for all
$n\geq 1$. The following result summarizes work by Sarig  \cite{sa1},  which generalizes previous work by  Gurevich \cite{gu1, gu2} and by  Mauldin and Urba\'nski \cite{mu}.

\begin{theorem} \label{thm:pcms}
Let $(\Sigma, \sigma)$ be a transitive countable Markov shift and $\phi \colon \Sigma \to \R$  a function of summable variations. 
\begin{eqnarray*}
P_{\sigma}(\phi) &:=& \lim_{n \to
\infty} \frac{1}{n} \log \sum_{x:\sigma^{n}x=x} \exp \left(
\sum_{i=0}^{n-1} \phi(\sigma^{i}x)\right)  \chi_{C_{i_{0}}}(x) \\
&=& \sup \left\{ h(\nu) + \int \phi \ d \nu : \nu \in \M_{\sigma} \text{ and } - \int \phi \ d \nu < \infty \right\}\\
&=& \sup \{ P_{K}( \phi) : K\in \cK \}.
\end{eqnarray*}
The functional $P_{\sigma}$ is called \emph{Gurevich pressure} of $\phi$. The symbol $\chi_{C_{i_{0}}}(x)$ denotes the characteristic function of the cylinder $C_{i_{0}} \subset \Sigma$. The space of $\sigma-$invariant probability measures is denoted by $\M_{\sigma} $ and $h(\nu)$ denotes the entropy of the measure $\nu$. Finally, $\cK:= \{ K \subset \Sigma : K \ne \emptyset \text{ compact and }
\sigma\text{-invariant}\}$.
\end{theorem}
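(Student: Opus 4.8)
The plan is to first show that the limit in the first line exists and is independent of $i_{0}$, and then to deduce the two displayed equalities from the corresponding classical statements for subshifts of finite type on finite alphabets, by exhausting $\Sigma$ with compact subsystems. For the first step, fix a symbol $a$, write $Z_{n}(\phi,a)$ for the sum in the statement with $C_{i_{0}}=C_{a}$, and set $\phi_{n}=\sum_{i=0}^{n-1}\phi\circ\sigma^{i}$. If $x$ is $n$-periodic and $y$ is $m$-periodic, both with first coordinate $a$, then $(x_{0}\cdots x_{n-1}y_{0}\cdots y_{m-1})$ is an admissible word whose periodic extension $z$ is $(n+m)$-periodic with first coordinate $a$, the map $(x,y)\mapsto z$ is injective, and summable variation yields $|\phi_{n+m}(z)-\phi_{n}(x)-\phi_{m}(y)|\le C$ for a constant $C$ depending only on $\sum_{k}V_{k}(\phi)$. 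Hence $Z_{n+m}(\phi,a)\ge e^{-C}Z_{n}(\phi,a)Z_{m}(\phi,a)$, so $\log Z_{n}(\phi,a)-C$ is superadditive and Fekete's lemma gives $\lim_{n}\tfrac1n\log Z_{n}(\phi,a)\in(-\infty,+\infty]$. Inserting fixed bridging words (which exist by transitivity) before and after a loop based at $a$ produces loops based at any other symbol $b$ and shows the limit does not depend on the chosen symbol; call it $P_{\sigma}(\phi)$.

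For the exhaustion, given a finite $W\subset\N$ let $\Sigma_{W}=\{x\in\Sigma:x_{i}\in W\text{ for all }i\}$, a compact $\sigma$-invariant subshift of finite type on the finite alphabet $W$, and let $Z_{n}^{\Sigma_{W}}$ be the corresponding sums. The concatenation estimate above applies on $\Sigma_{W}$ with the same constant $C$ (because $V_{k}(\phi|_{\Sigma_{W}})\le V_{k}(\phi)$), giving both $\tfrac1n\log Z_{n}^{\Sigma_{W}}(\phi,a)\to P_{\Sigma_{W}}(\phi)$ and $P_{\Sigma_{W}}(\phi)\ge\tfrac1n\log Z_{n}^{\Sigma_{W}}(\phi,a)-\tfrac Cn$ for every $n$. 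Since a given periodic orbit lies in $\Sigma_{W}$ once $W$ is large, $Z_{n}^{\Sigma_{W}}(\phi,a)\uparrow Z_{n}^{\Sigma}(\phi,a)$ as $W\uparrow\N$; choosing $n$ large, so that $C/n$ is small and $\tfrac1n\log Z_{n}^{\Sigma}(\phi,a)$ is close to $P_{\sigma}(\phi)$ (with the evident modification when $P_{\sigma}(\phi)=+\infty$), and then $W$ large, gives $\sup_{W}P_{\Sigma_{W}}(\phi)\ge P_{\sigma}(\phi)$, while $Z_{n}^{\Sigma_{W}}\le Z_{n}^{\Sigma}$ gives the reverse inequality. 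Finally, any $K\in\cK$ meets only finitely many length-one cylinders (these form an open partition of $\Sigma$), so $K\subseteq\Sigma_{W}$ for some finite $W$ and $P_{K}(\phi)\le P_{\Sigma_{W}}(\phi)$ by monotonicity of the pressure; since $\Sigma_{W}\in\cK$ when non-empty, this yields $\sup_{K\in\cK}P_{K}(\phi)=\sup_{W}P_{\Sigma_{W}}(\phi)=P_{\sigma}(\phi)$, the third displayed line.

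It remains to prove the variational principle. For ``$\le$'', the classical variational principle on the (transitive) compact system $\Sigma_{W}$ gives $P_{\Sigma_{W}}(\phi)=\sup\{h(\nu)+\int\phi\,d\nu:\nu\in\M_{\sigma}(\Sigma_{W})\}$; since $\phi$ is uniformly continuous it is bounded on the compact set $\Sigma_{W}$, so each such $\nu$, regarded as an element of $\M_{\sigma}$, satisfies $-\int\phi\,d\nu<\infty$, and taking the supremum over $W$ shows $P_{\sigma}(\phi)$ is at most the right-hand side. For ``$\ge$'', by the ergodic decomposition and affinity of $\nu\mapsto h(\nu)+\int\phi\,d\nu$ it suffices to treat an ergodic $\nu$ with $-\int\phi\,d\nu<\infty$; the Shannon--McMillan--Breiman and Birkhoff ergodic theorems then produce, for $\epsilon>0$ and $n$ large, a family of admissible $n$-words $w$ on a finite alphabet $W$ with $\nu(C_{w})\le e^{-n(h(\nu)-\epsilon)}$ and $\inf_{C_{w}}\phi_{n}\ge n\big(\int\phi\,d\nu-\epsilon\big)$, of total $\nu$-measure bounded below; enlarging $W$ to allow a fixed return bridge to $a$, each $w$ determines a distinct periodic orbit of $\Sigma_{W}$ of some period $n'=n+O(1)$, and counting these orbits gives $\tfrac1{n'}\log Z_{n'}^{\Sigma_{W}}(\phi,a)\ge h(\nu)+\int\phi\,d\nu-3\epsilon$, whence $P_{\sigma}(\phi)\ge h(\nu)+\int\phi\,d\nu-3\epsilon$; the cases $h(\nu)=+\infty$ or $\int\phi\,d\nu=+\infty$ are handled by truncation and then force $P_{\sigma}(\phi)=+\infty$.

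The main obstacle is this last ``$\ge$'' step: converting the Shannon--McMillan--Breiman and Birkhoff word statistics of an arbitrary, possibly far from tight, invariant measure into honest periodic orbits through a prescribed symbol, while keeping the Birkhoff sums controlled by $\sum_{k}V_{k}(\phi)$ and correctly handling the infinite-pressure cases. Everything else reduces to Fekete's lemma, monotone convergence in $W$, and the compact theory.
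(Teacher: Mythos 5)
First, a point of reference: the paper does not prove this theorem at all --- it is stated as a summary of results of Gurevich, Sarig and Mauldin--Urba\'nski, with the proof delegated to \cite{sa1} --- so your proposal cannot be measured against an in-paper argument and has to stand on its own as a reconstruction of Sarig's theorem. The first two thirds of that reconstruction are sound and follow the standard route: the loop-concatenation estimate at a fixed symbol $a$, with distortion controlled by $2\sum_k V_k(\phi)$, does give almost-superadditivity of $\log Z_n(\phi,a)$ and hence existence of the limit by Fekete's lemma (one caveat, inherited from the statement itself: for a shift that is transitive but not topologically mixing, $Z_n(\phi,a)$ vanishes off an arithmetic progression and the limit exists only along multiples of the period; Sarig assumes topological mixing at this point). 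The exhaustion by the compact subshifts $\Sigma_W$ and the identification $\sup_W P_{\Sigma_W}(\phi)=\sup_{K\in\cK}P_K(\phi)=P_{\sigma}(\phi)$ is also essentially correct, provided you handle the possible non-transitivity of $\Sigma_W$ by decomposing it into transitive components and applying your independence-of-symbol step on each component.

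The genuine gap is exactly where you flagged it, and it is larger than the flag suggests. In the inequality $P_{\sigma}(\phi)\ge h(\nu)+\int\phi\,d\nu$ you claim that Shannon--McMillan--Breiman and Birkhoff produce ``admissible $n$-words $w$ on a finite alphabet $W$'' which can then be closed up by ``a fixed return bridge to $a$''. Neither assertion is available: a $\nu$-generic orbit visits infinitely many symbols unless $\nu$ is supported on a compact subsystem, so the good $n$-words do not lie in any $\Sigma_W$; and the admissible bridges from the (infinitely many possible) terminal symbols of $w$ back to $a$ have unbounded lengths, so the closed orbits have periods $n'$ spread over an unbounded range, the map from words to orbits is not obviously injective at fixed period, and the resulting orbits live in $\Sigma$ rather than in $\Sigma_W$, which breaks your reduction to the compact variational principle. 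This can be repaired (restrict to times at which the orbit returns to a fixed finite union of cylinders of almost full measure, so that endpoints and hence bridge lengths are bounded; control the multiplicity of the closing map; and estimate $Z_{n'}^{\Sigma}(\phi,a)$ directly rather than through $\Sigma_W$), but that repair is the real content of the theorem and it is missing from the sketch. It is worth knowing that Sarig's own proof of this inequality avoids periodic-orbit counting altogether: he induces on a cylinder $C_a$ of positive $\nu$-measure, uses the formulas of Abramov and Kac to transfer $h(\nu)+\int\phi\,d\nu$ to the induced full shift, and compares with the partition sums $Z_n(\phi,a)$ there --- a route you may find easier to make rigorous than the shadowing argument.
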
 
It is possible to show that the limit in the definition of $P_{\sigma}$ always exists \cite{sa1}. Moreover, since
$(\Sigma,\sigma)$ is transitive, $P_{\sigma}(\phi)$ does not depend on $i_0$. A measure $\nu \in \M_{\sigma}$ such that $P_{\sigma}(\phi)= h(\nu) + \int \phi \ d \nu$ is called \emph{equilibrium measure} for $\phi$. Buzzi and Sarig \cite{busa} proved that a potential of summable variations has at most one equilibrium measure.

\subsection{The BIP case} \label{ss:bip}

We say that a countable Markov shift $(\Sigma, \sigma)$, defined by the transition matrix $T=(t_{ij})_{\N\times\N},$ satisfies the \emph{BIP condition} if and only if there exists a finite set $B \subset \N $ such that for every $a \in \N $ there exist $b,b'\in B$ with $t_{b a} t_{a b'}=1.$ For this class of countable Markov shifts, introduced by Sarig \cite{Sar01a}, the thermodynamic formalism  is similar to that of sub-shifts of finite type defined on finite alphabets.
 The following theorem\footnote{ The authors thank Thiago Costa Raszeja for pointing out that $s_{\infty}$ can be zero, by considering for example the full shift and $\varphi(x)=-x_0.$ } summarizes results proven by Sarig in \cite{sar3000} and by Mauldin and Urba\'{n}ski \cite{mu}.

\begin{theorem} \label{bip}
Let $(\Sigma, \sigma)$ be a transitive countable Markov shift satisfying the BIP condition and $\phi: \Sigma \to \R$ a non positive locally H\"older potential. Then, there exists $s_{\infty} \in [0, \infty]$ such that pressure function $t \to P_{\sigma}(t\phi)$ has the following properties
\begin{equation*}
P_{\sigma}(t \phi)=
\begin{cases}
\infty  & \text{ if  } t  < s_{\infty}; \\
\text{real analytic } & \text{ if  } t >s_{\infty}.
\end{cases}
\end{equation*}
Moreover, if $t> s_{\infty}$, there exists a unique equilibrium measure for $t \phi$.
\end{theorem}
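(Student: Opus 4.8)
The plan is to assemble three pieces. The variational principle of Theorem~\ref{thm:pcms} gives convexity and monotonicity of $t\mapsto P_\sigma(t\phi)$ for free, and with them the threshold $s_\infty$ and the first alternative. Sarig's Ruelle--Perron--Frobenius theorem for BIP shifts of finite Gurevich pressure supplies, for $t>s_\infty$, a transfer operator with a simple isolated leading eigenvalue $e^{P_\sigma(t\phi)}$ and a unique Gibbs measure. Finally, Kato's analytic perturbation theory promotes this spectral picture to real-analyticity of $t\mapsto e^{P_\sigma(t\phi)}$. The hypothesis $\phi\le 0$ is what keeps the perturbation argument alive in the non-compact setting: it dominates the whole complexified family of transfer operators by one fixed bounded operator.

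First I would extract the soft part. Write $P(t):=P_\sigma(t\phi)$. For $t>0$, Theorem~\ref{thm:pcms} gives $P(t)=\sup\{h(\nu)+t\int\phi\,d\nu\}$, the supremum over the ($t$-independent) family of $\nu\in\M_\sigma$ with $\int(-\phi)\,d\nu<\infty$; each summand being affine in $t$ with slope $\int\phi\,d\nu\le 0$, $P$ is convex and non-increasing on $(0,\infty)$, with values in $(-\infty,+\infty]$. Hence $\{t>0:P(t)<\infty\}$ is a right-unbounded interval; let $s_\infty$ be its left endpoint, so $P(t)=\infty$ for $t<s_\infty$ (vacuous if the interval is empty). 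For the rest fix $t_0>s_\infty$ and an auxiliary $t_1\in(s_\infty,t_0)$, so $P(t_1)<\infty$.

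Next I would invoke Sarig's thermodynamic formalism for the transitive BIP shift $(\Sigma,\sigma)$ and the locally Hölder potential $t\phi$ of finite Gurevich pressure (which holds for every $t>s_\infty$): the transfer operator $L_{t\phi}f(x)=\sum_{\sigma y=x}e^{t\phi(y)}f(y)$ acts boundedly on a suitable Banach space of locally Hölder functions, its leading eigenvalue $\lambda(t)=e^{P(t)}$ is simple and isolated in the spectrum, and $t\phi$ has a unique Gibbs measure $\mu_t$. Uniqueness of the equilibrium measure of $t\phi$ is then immediate from Buzzi--Sarig \cite{busa} (summable variations); for existence, $\mu_t$ is an equilibrium measure because $P(t)<\infty$ and $-\int\phi\,d\mu_t<\infty$, the integrability being read off from the Gibbs inequalities, the estimate $-\phi|_{C_a}\le -\sup_{C_a}\phi+V_1(\phi)$, and the finiteness of $\sum_a e^{t'\sup_{C_a}\phi}$ for an intermediate $t'\in(s_\infty,t)$ — finite because $P(t')<\infty$ and $(\Sigma,\sigma)$ is BIP, and here $\phi\le 0$ is used to absorb the polynomial factor into $e^{-(t-t')\sup_{C_a}(-\phi)}$.

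Finally, for real-analyticity I would perturb around each $t_0>s_\infty$. For complex $z$ with $\mathrm{Re}\,z>t_1$ the sign condition gives $|e^{z\phi(y)}|=e^{(\mathrm{Re}\,z)\phi(y)}\le e^{t_1\phi(y)}$, and, choosing $t_1'\in(s_\infty,t_1)$, also $|e^{z\phi(y)}\phi(y)|\le C\,e^{t_1'\phi(y)}$ since $u\mapsto u\,e^{-(t_1-t_1')u}$ is bounded on $[0,\infty)$; hence $L_{z\phi}$ and its formal $z$-derivatives are dominated, entrywise, by bounded operators, so $z\mapsto L_{z\phi}$ is a holomorphic family of bounded operators on the complexified space. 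Since $\lambda(t_0)$ is a simple isolated eigenvalue of $L_{t_0\phi}$, Kato's theory makes $\lambda(z)$ and its spectral projection holomorphic near $t_0$, whence $P(t)=\log\lambda(t)$ is real-analytic at $t_0$; as $t_0>s_\infty$ was arbitrary, $P$ is real-analytic on $(s_\infty,\infty)$. The main obstacle is the ingredient I have quoted rather than proved: the Ruelle--Perron--Frobenius theorem for BIP shifts of finite Gurevich pressure — constructing the Banach space and establishing the spectral gap — due to Sarig \cite{sar3000}, with its conformal counterpart in Mauldin--Urba\'nski \cite{mu}; granting it, the rest is routine bookkeeping. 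An alternative avoiding operator theory is to induce on one cylinder so that the first-return system is a countable full shift, express $P(t)$ implicitly through the renewal equation
\[
\sum_{n\ge 1}\sum_{w}\exp\!\big(t\,S_n\phi(w)-ns\big)=1
\]
over first-return loops $w$, and apply the analytic implicit function theorem — strict monotonicity in $s$ being evident from the factor $-n\le-1$ — though this too rests on Sarig's inducing and positive-recurrence results.
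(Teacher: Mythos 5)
The paper does not actually prove this theorem: it is presented as a summary of results of Sarig \cite{sar3000} (see also \cite{Sar01a, cs}) and Mauldin--Urba\'nski \cite{mu}, so there is no in-paper argument to compare against. Your outline is, in substance, the standard proof from those references: convexity and monotonicity of $t\mapsto P_\sigma(t\phi)$ from the variational expression (using $\phi\le 0$ to make the constraint set $t$-independent) to define $s_\infty$ and get the first alternative; the BIP Ruelle--Perron--Frobenius theorem with spectral gap for the finite-pressure range; domination of the complexified transfer operators $L_{z\phi}$ by $L_{t_1\phi}$ (again via $\phi\le 0$) plus Kato perturbation for analyticity; and Buzzi--Sarig \cite{busa} for uniqueness, with existence coming from $\int(-\phi)\,d\mu_t<\infty$, which you correctly reduce to $\sum_a(-\sup_{C_a}\phi)e^{t\sup_{C_a}\phi}<\infty$ and hence to finiteness of the pressure at an intermediate parameter. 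Two small remarks. First, your claim that $z\mapsto L_{z\phi}$ is holomorphic because its formal derivatives are ``dominated entrywise'' deserves one more line: one expands $L_{z\phi}=\sum_n\frac{(z-t_0)^n}{n!}L_{t_0\phi}(\phi^n\,\cdot\,)$ and checks operator-norm convergence on the Hölder space, which is exactly where the bound $\lvert\phi^n e^{z\phi}\rvert\le C_n e^{t_1'\phi}$ is used; this is carried out in \cite{cs, mu}. Second, your definition of $s_\infty$ as the left endpoint of the finiteness interval only yields $s_\infty\ge 0$, not $s_\infty>0$ as the statement asserts; in fact for $\phi|_{C_a}=-e^{a}$ on the full shift one has $P_\sigma(t\phi)<\infty$ for every $t>0$, so $s_\infty=0$ there --- this is an imprecision in the statement as quoted rather than a gap in your argument, but it is worth noting that nothing in your (or the cited) proof forces $s_\infty>0$.
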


We classify non positive  locally H\"older potentials according to the behaviour of the pressure function $t \mapsto P_{\sigma}(t \phi)$ at $t=s_{\infty}$.

\begin{definition}
Let $(\Sigma, \sigma)$ be a transitive countable Markov shift satisfying the BIP condition and $\phi: \Sigma \to \R$ a non positive  locally H\"older potential. We say that 
\begin{enumerate}
\item The potential $\phi$ is of \emph{infinite type} if for every $t \in \R$ we have $P_{\sigma}(t \phi)= \infty$.
\item  The potential $\phi$ is of \emph{continuous type} if there exists $t_0 \in \R$ such that $P_{\sigma}(t_0 \phi) <\infty$, $P_{\sigma}(s_{\infty} \phi)= \infty$ and $\lim_{t \to s_{\infty}^+}P_{\sigma}(t \phi)=\infty$.
\item  The potential $\phi$ is of \emph{discontinuous type} if there exists $t_0 \in \R$ such that $P_{\sigma}(t_0 \phi) <\infty$ and is not of continuous type, that is $P_{\sigma}(s_{\infty} \phi) <\infty$ or $\lim_{t \to s_{\infty}^+}P_{\sigma}(t \phi) <\infty$.
\end{enumerate}
\end{definition}

\begin{tikzpicture}

\draw[->] (0,0) -- (2,0) node[anchor=north] {$t$};
\draw	(0,0) node[anchor=north] {0};
\draw[->] (0,0) -- (0,2) node[anchor=east] {$P_{\sigma}(t \phi)$};
\draw[] (0,0) -- (0,1) node[anchor=east] {$\infty$};
\draw[thick] (0,1) -- (2,1);
\draw (1,2) node[anchor=south]{{\scriptsize Infinite type}};

\draw[->] (4+0,0) -- (4+2,0) node[anchor=north] {$t$};
\draw	(4+0,0) node[anchor=north] {0};
\draw[->] (4+0,0) -- (4+0,2) node[anchor=east] {$P_{\sigma}(t \phi)$};
\draw[] (4+0,0) -- (4+0,1) node[anchor=east] {$\infty$};
\draw[thick] (4+0,1) -- (4+1,1);
\draw (4+1,2) node[anchor=south]{{\scriptsize Continuous type}};
\draw[thick] (5,1) -- (5,1) parabola[bend at end] (6,-1);
\draw[] (5,0) -- (5,0) node[anchor=north] {$s_\infty$};
\draw[thick,dotted] (5,0) -- (5,1);

\draw[->] (8+0,0) -- (8+2,0) node[anchor=north] {$t$};
\draw	(8+0,0) node[anchor=north] {0};
\draw[->] (8+0,0) -- (8+0,2) node[anchor=east] {$P_{\sigma}(t \phi)$};
\draw[] (8+0,0) -- (8+0,1) node[anchor=east] {$\infty$};
\draw[thick] (8+0,1) -- (8+1,1);
\draw (8+1,2) node[anchor=south]{{\scriptsize Discontinuous type}};
\draw[thick] (9,0.6) -- (9,0.6) parabola[bend at end] (10,-1);
\draw[] (9,0) -- (9,0) node[anchor=north] {$s_\infty$};
\draw[thick,dotted] (9,0) -- (9,1);

\end{tikzpicture}

\subsection{The full-shift and analytic tools to construct examples.} \label{sb:ex}
Let $(\Sigma, \sigma)$ be the full-shift on countably many symbols, that is
\[ \Sigma:= \left\{ (x_n)_{n \in \N\cup \{0\}} : x_n \in \N \right\}.\]
It clearly satisfies the BIP property. If $\phi :\Sigma \to \R$ is a non positive locally constant potential, that is, for every $n\in\N$ we have that $\phi|C_n:= - \log \lambda_n$ for some $\lambda_n \in (1,\infty)$, then there is a simple formula for the pressure  (\cite[Example 1]{bi1})
\begin{equation} \label{Dirichlet}
e^{P_{\sigma}(t\phi )}= \sum_{n=1}^{\infty} \lambda_n^{-t}.
\end{equation}
Note that, in this case, $s_{\infty}=\inf\{t: \sum_{n=1}^{\infty} \lambda_n^{-t}<\infty\}.$
The above formulas allows for the use of analytic tools to construct examples of potentials having prescribed properties.  But not only that, we will relate the continuity type of the non positive potential $\phi$  with the nature of the singularities of the meromorphic extensions of the Dirichlet series \eqref{Dirichlet}. Indeed, we observe that a potential $\phi$ is of continuity type if (\ref{Dirichlet}) has meromorphic extension with maybe some poles, and $\phi$ is of discontinuity type if (\ref{Dirichlet}) has meromorphic extension with only branch points. We now recall some theorems on meromorphic extensions of certain Dirichlet series that we will use later. The following result can be found in \cite[Main Theorem I]{eie}.

\begin{theorem} \label{teo:e}
Let $P(x)=\prod_{j=1}^k(x+\delta_j)$ be a polynomial with real coefficients and $\delta_j \in \mathbb{C}$ satisfying  $\mathcal{R}e(\delta_j)>-1$ for $j=1, \ldots k.$ The associated Zeta function  $$ Z(s)=\sum_{n=1}^{\infty}\frac{1}{P(n)^s}$$ is holomorphic for $\mathcal{R}e(s)>\frac{1}{k}$ and it has an analytic continuation in the whole complex plane with only possible poles at $\frac{j}{k}$ for  $j=1,0,-1,-2,\ldots$ other than non-positive integers.
\end{theorem}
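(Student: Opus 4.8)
The plan is to read off the analytic continuation directly from the Dirichlet series, by splitting off from each term $P(n)^{-s}$ its leading behaviour $n^{-ks}$ and handling the remainder as an absolutely convergent perturbation assembled out of shifted Riemann zeta values. Two elementary inputs come first. Since $P$ has real coefficients its non-real roots occur in conjugate pairs, so $P(n)=\prod_{\delta_j\in\mathbb{R}}(n+\delta_j)\,\prod_{\{\delta_j,\overline{\delta_j}\}}|n+\delta_j|^{2}$; as $\mathcal{R}e(\delta_j)>-1$ forces $n+\delta_j>0$ at every real root and $\delta_j\neq -n$ at every root when $n\ge 1$, this gives $P(n)>0$ for all $n\ge 1$, so $P(n)^{-s}$ is unambiguously defined. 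Combined with $P(n)=n^{k}(1+O(1/n))$ this yields $|P(n)^{-s}|\asymp n^{-k\,\mathcal{R}e(s)}$, whence $Z$ converges absolutely and locally uniformly — hence is holomorphic — exactly on $\{\mathcal{R}e(s)>1/k\}$.

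Next I would fix an integer $n_0>\sum_j|\delta_j|$ and, for $n>n_0$, expand $P(n)^{-s}=n^{-ks}\prod_{j=1}^{k}(1+\delta_j/n)^{-s}$ by the binomial series to obtain, for every $L\ge 0$,
\[
P(n)^{-s}=\sum_{\ell=0}^{L}c_\ell(s)\,n^{-ks-\ell}+n^{-ks}r_L(n,s),
\]
where $c_\ell(s)$ is the coefficient of $u^{\ell}$ in $\prod_{j=1}^{k}(1+\delta_j u)^{-s}$ — a polynomial in $s$ of degree $\le\ell$, with $c_0\equiv 1$, because $\log\prod_j(1+\delta_j u)$ has no constant term — and $|r_L(n,s)|\le C_L(s)\,n^{-L-1}$ with $C_L$ locally bounded in $s$. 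Summing over $n$ and using $\sum_{n>n_0}n^{-ks-\ell}=\zeta(ks+\ell)-\sum_{n\le n_0}n^{-ks-\ell}$ gives, for $\mathcal{R}e(s)>1/k$,
\[
Z(s)=\sum_{n\le n_0}P(n)^{-s}+\sum_{\ell=0}^{L}c_\ell(s)\Bigl(\zeta(ks+\ell)-\sum_{n\le n_0}n^{-ks-\ell}\Bigr)+\sum_{n>n_0}n^{-ks}r_L(n,s).
\]
Here the finite sums are entire, the tail $\sum_{n>n_0}n^{-ks}r_L(n,s)$ converges locally uniformly and is holomorphic on $\{\mathcal{R}e(s)>-L/k\}$, and each $\zeta(ks+\ell)$ contributes only a simple pole at $s=(1-\ell)/k$ with residue $1/k$. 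Letting $L\to\infty$ then exhibits $Z$ as meromorphic on $\mathbb{C}$ with at most simple poles at the points $s=(1-\ell)/k$, $\ell=0,1,2,\dots$, that is at $j/k$ with $j=1,0,-1,-2,\dots$, the residue at $(1-\ell)/k$ being $c_\ell\bigl((1-\ell)/k\bigr)/k$.

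To discard the non-positive integers from this list I would use the shape of the coefficients: $(1-\ell)/k$ is a non-positive integer precisely when $\ell=1+km$ for some integer $m\ge 0$, in which case it equals $-m$ and the residue there is $c_{1+km}(-m)/k$. But $c_\ell(-m)$ is the coefficient of $u^{\ell}$ in $\prod_{j=1}^{k}(1+\delta_j u)^{m}$, a polynomial in $u$ of degree $km<1+km$, so $c_{1+km}(-m)=0$ and $Z$ is regular at $-m$. This produces exactly the stated set of possible poles.

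I expect the only genuinely technical point to be the remainder control: one must show $|r_L(n,s)|\le C_L(s)\,n^{-L-1}$ with $C_L$ locally bounded, which follows from Taylor's theorem applied to $u\mapsto\prod_j(1+\delta_j u)^{-s}$ on $|u|\le 1/n_0$, together with the resulting holomorphy of $\sum_{n>n_0}n^{-ks}r_L(n,s)$ on $\{\mathcal{R}e(s)>-L/k\}$. The bounds have to be made uniform for $s$ in compact sets, since this uniformity is what legitimizes the decomposition of $Z(s)$ and the passage $L\to\infty$; everything else is bookkeeping.
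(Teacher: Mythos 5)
Your proof is correct, but note that the paper itself offers no proof of this statement to compare against: it is quoted directly from Eie \cite[Main Theorem I]{eie}. The proof in that reference (like the related results the paper records as Theorems \ref{teo_zieger} and \ref{Hurwitz_zeta}) goes through the Mellin transform, writing $\Gamma(s)Z(s)=\int_0^\infty t^{s-1}\sum_{n\ge 1}e^{-P(n)t}\,dt$ and extracting the continuation from an asymptotic expansion of $\sum_n e^{-P(n)t}$ in powers of $t^{1/k}$ as $t\to 0^+$; in that approach the exclusion of the non-positive integers comes from the zeros of $1/\Gamma(s)$. Your route is genuinely different and more elementary: you peel off the terms $c_\ell(s)n^{-ks-\ell}$ from the binomial expansion of $\prod_j(1+\delta_j/n)^{-s}$, reassemble the series from shifted Riemann zeta functions plus an absolutely convergent tail holomorphic on $\{\mathcal{R}e(s)>-L/k\}$, and let $L$ grow; this is entirely self-contained (modulo the continuation of $\zeta$ itself) and has the bonus of producing the residues explicitly as $c_\ell\bigl((1-\ell)/k\bigr)/k$. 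Your replacement for the $1/\Gamma$ mechanism --- that $c_{1+km}(-m)=0$ because $\prod_j(1+\delta_j u)^{m}$ is a polynomial of degree $km$ in $u$ --- is clean and correct, since $c_\ell$ is a polynomial in $s$ whose value at $s=-m$ must be the corresponding Taylor coefficient. Two points deserve to be written out in a final version: (i) that $P(n)>0$ (real roots give positive factors since $\delta_j>-1$, non-real roots pair off into $|n+\delta_j|^2$) and that the real-logarithm definition of $P(n)^{-s}$ agrees with $n^{-ks}\prod_j(1+\delta_j/n)^{-s}$ taken with principal branches, the imaginary parts of $\mathrm{Log}(1+\delta_j/n)$ cancelling in conjugate pairs; and (ii) the locally uniform remainder bound $|r_L(n,s)|\le C_L(s)n^{-L-1}$, which you rightly flag as the technical crux and which follows from Cauchy estimates on $|u|\le 1/n_0$. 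Both are routine, and the argument as a whole is sound.
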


We recall the definitions of generalized Dirichlet series and asymptotic expansion. 

\begin{definition}
A generalized Dirichlet series is an infinite series 
\begin{equation}\label{genDirichletSeries}
L(s)=\sum_{n=1}^{\infty}a_n \lambda_n^{-s},
\end{equation}
where $s\in \mathbb{C}, \{a_n\}\subset \mathbb{C}, \{\lambda_n\}\subset \R^+$ and such that  $0<\lambda_1<\lambda_2<\cdots$ and $\lim_{n\to\infty}\lambda_n=\infty.$
\end{definition}

\begin{definition}
We say that  $f(t)$ has asymptotic expansion $$f(t)\sim \sum_{n=0}^{\infty} a_n f_n(t) \mbox{ as }t\to 0,$$ if $f(t)-\sum_{n=0}^{N-1} a_n f_n(t)$ is $O(f_N(t))$ as $t\to 0$ for any integer $N\geq 0.$
\end{definition}

The next result appears in \cite[p.313]{zieger}.
 
 \begin{theorem}\label{teo_zieger}
Let $L(s)$ be a generalized Dirichlet series as in (\ref{genDirichletSeries}). If the following conditions are satisfied:
\begin{enumerate}
\item the sequence $(\lambda_n)$ is growing at least as fast as some positive power of $n;$
\item the series $L(s)$ is convergent at some $s^* \in \mathbb{C};$ and
\item $f(t):=\sum_{n=1}^{\infty}a_n e^{-\lambda_n t}$ has asymptotic expansion
$$
f(t) \sim \sum_{n=-1}^{\infty} b_n t^n \mbox{ as $t\to 0.$}
$$
\end{enumerate}
Then $L(s)$ has a meromorphic extension to all $s\in\mathbb{C},$ with a simple pole of residue $b_{-1}$ at $s=1$ and no other singularities. Its values at non-positive integers are given by
$$
L(-n)=(-1)^n n! b_n \mbox{ }(n=0,1,2,\ldots).
$$
\end{theorem}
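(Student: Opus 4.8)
The plan is to route everything through the Laplace transform of the coefficient sequence and to read off the analytic continuation from the asymptotic expansion in hypothesis (3), using the Mellin identity $\Gamma(s)\lambda_n^{-s}=\int_0^\infty t^{s-1}e^{-\lambda_n t}\dd t$ (valid for $\operatorname{Re}(s)>0$). First I would establish the integral representation
\begin{equation*}
\Gamma(s)\,L(s)=\int_0^\infty t^{s-1}f(t)\dd t
\end{equation*}
on a suitable half-plane $\operatorname{Re}(s)>\sigma_0$. To justify the interchange of sum and integral I would use hypothesis (2): since $\sum_n a_n\lambda_n^{-s^*}$ converges its terms are bounded, so $|a_n|\le M\lambda_n^{\operatorname{Re}(s^*)}$ (in particular $f(t)=\sum_n a_ne^{-\lambda_nt}$ converges for every $t>0$ because $\lambda_n\to\infty$); combining this with hypothesis (1), which gives $\lambda_n\ge c\,n^\alpha$ for some $\alpha>0$, one sees that $\sum_n|a_n|\lambda_n^{-s}$ converges absolutely once $\alpha(\operatorname{Re}(s)-\operatorname{Re}(s^*))>1$, and on that half-plane Fubini applies.

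Next I would split $\int_0^\infty=\int_0^1+\int_1^\infty$ and handle the two pieces separately. For $t\ge 1$ the bound on $|a_n|$ together with $\lambda_1>0$ yields $|f(t)|\le Ce^{-\lambda_1 t}$, so $\int_1^\infty t^{s-1}f(t)\dd t$ converges for every $s\in\mathbb{C}$ and defines an entire function. For the integral over $(0,1)$ I would feed in hypothesis (3): for each integer $N\ge 0$ write $f(t)=\sum_{n=-1}^{N-1}b_n t^n+R_N(t)$ with $R_N(t)=O(t^N)$ as $t\to 0^+$, so that
\begin{equation*}
\int_0^1 t^{s-1}f(t)\dd t=\sum_{n=-1}^{N-1}\frac{b_n}{s+n}+\int_0^1 t^{s-1}R_N(t)\dd t ,
\end{equation*}
where $\int_0^1 t^{s+n-1}\dd t=(s+n)^{-1}$ on $\operatorname{Re}(s)>-n$ and the remainder integral is holomorphic on $\operatorname{Re}(s)>-N$. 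Since $N$ is arbitrary, this exhibits $\Gamma(s)L(s)$ as a meromorphic function on $\mathbb{C}$ whose only singularities are (at most) simple poles at $s=1,0,-1,-2,\dots$, with residue $b_n$ at $s=-n$ for $n\ge -1$.

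Finally I would divide by $\Gamma(s)$. At $s=1$, $\Gamma$ is holomorphic and $\Gamma(1)=1$, so $L$ inherits a simple pole there of residue $b_{-1}$; everywhere else off $\{0,-1,-2,\dots\}$ the function $\Gamma$ is holomorphic and zero-free, so $L$ is regular. At each $s=-n$ with $n\ge 0$, $\Gamma$ has a simple pole of residue $(-1)^n/n!$, which cancels the at-most-simple pole of $\Gamma(s)L(s)$; hence $L$ is holomorphic at $-n$, and comparing the leading Laurent coefficients gives $L(-n)=(-1)^n n!\,b_n$. The main obstacle, and essentially the only technical point, is the analytic bookkeeping around the integral representation: pinning down the half-plane of absolute convergence needed for Fubini (this is precisely where hypotheses (1)–(2) enter, not merely convergence of $L$ at one point) and checking that $\int_0^1 t^{s-1}R_N(t)\dd t$ is genuinely holomorphic on $\operatorname{Re}(s)>-N$; both follow from the quantitative bounds above but should be written out with care.
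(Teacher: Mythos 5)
The paper does not prove this statement at all: it is quoted verbatim from Zeidler \cite[p.~313]{zieger} as a known result, so there is no in-paper argument to compare against. Your Mellin-transform proof is the standard one for such continuation theorems and is correct: the integral representation $\Gamma(s)L(s)=\int_0^\infty t^{s-1}f(t)\,\mathrm{d}t$, the splitting at $t=1$ with the tail entire and the near-origin piece expanded via hypothesis (3), and the cancellation of the poles of $\Gamma(s)L(s)$ at $s=0,-1,-2,\dots$ against those of $\Gamma(s)$ all go through exactly as you describe, and hypotheses (1)--(2) are used in precisely the right place (to get a finite abscissa of absolute convergence for Fubini and exponential decay of $f$ at infinity).
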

 
Note that the hypotheses of Theorems \ref{teo:e} and \ref{teo_zieger} are not disjoint, indeed there are maps that satisfy both, for example $L(s)=\sum_{n=1}^{\infty}\frac{1}{(n+q)^s}.$ This special case corresponds to the Hurwitz zeta function.


\begin{theorem}[Hurwitz zeta function]\label{Hurwitz_zeta}
The formally defined series $$\zeta(s,q)=\sum_{n=0}^{\infty}(n+q)^{-s}$$ with  $Re(s) > 1$ and $Re(q) > 0$ is absolutely convergent  and can be extended by analytic continuation to a meromorphic function on all $s\neq 1.$ At 
$s=1$ it has a simple pole with residue $1.$ Moreover, the function has an integral representation in terms of the Mellin transform as $$\zeta(s,q)=\frac{1}{\Gamma(s)} \int_0^\infty \frac{t^{s-1}e^{-qt}}{1-e^{-t}}dt$$ for $Re(s)>1$ and $Re(q)>0.$
\end{theorem}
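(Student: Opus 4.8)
The plan is to establish three things in sequence: absolute convergence for $\mathrm{Re}(s)>1$ and $\mathrm{Re}(q)>0$, the Mellin–transform integral representation on that same half‑plane, and finally the meromorphic continuation with the residue computation at $s=1$. For absolute convergence I would compare $\sum_{n\ge 0}|(n+q)^{-s}|=\sum_{n\ge 0}(n+\mathrm{Re}(q))^{-\mathrm{Re}(s)}$ (up to the finitely many initial terms) with the integral $\int_{1}^{\infty}x^{-\mathrm{Re}(s)}\,dx$, which is finite exactly when $\mathrm{Re}(s)>1$; the condition $\mathrm{Re}(q)>0$ guarantees no term is singular. This is routine and I would not dwell on it.

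Next I would derive the integral representation. Starting from the classical identity $\Gamma(s)=\int_0^\infty t^{s-1}e^{-t}\,dt$ for $\mathrm{Re}(s)>0$, the substitution $t\mapsto (n+q)t$ gives $(n+q)^{-s}\Gamma(s)=\int_0^\infty t^{s-1}e^{-(n+q)t}\,dt$. Summing over $n\ge 0$ and interchanging sum and integral yields
\begin{equation*}
\Gamma(s)\,\zeta(s,q)=\int_0^\infty t^{s-1}e^{-qt}\sum_{n=0}^{\infty}e^{-nt}\,dt=\int_0^\infty \frac{t^{s-1}e^{-qt}}{1-e^{-t}}\,dt,
\end{equation*}
using the geometric series $\sum_{n\ge0}e^{-nt}=(1-e^{-t})^{-1}$ valid for $t>0$. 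The interchange is justified by Tonelli's theorem since all integrands are positive when $s$, $q$ are real and positive, and then extended to complex $s$ with $\mathrm{Re}(s)>1$ by analytic continuation in $s$ (both sides are holomorphic there, the integral converging absolutely because near $t=0$ the integrand is $O(t^{\mathrm{Re}(s)-2})$ and near $t=\infty$ it decays exponentially since $\mathrm{Re}(q)>0$). Dividing by $\Gamma(s)$, which is nonzero for $\mathrm{Re}(s)>1$, gives the stated formula.

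For the meromorphic continuation I would split the integral at $t=1$. The tail $\int_1^\infty t^{s-1}e^{-qt}(1-e^{-t})^{-1}\,dt$ is entire in $s$ because the exponential decay at infinity dominates any polynomial factor $t^{s-1}$. For the piece near $0$, I would insert the expansion $\frac{t}{1-e^{-t}}=\sum_{k=0}^{\infty}c_k t^k$ (so $\frac{1}{1-e^{-t}}=\frac1t+\frac12+\cdots$, with $c_0=1$), write
\begin{equation*}
\int_0^1 \frac{t^{s-1}e^{-qt}}{1-e^{-t}}\,dt=\int_0^1 t^{s-2}\Big(\textstyle\sum_{k=0}^{N} c_k t^k e^{-qt}\Big)dt+\int_0^1 t^{s-2}R_N(t)\,dt,
\end{equation*}
where $R_N(t)=O(t^{N+1})$ near $0$. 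The finite sum produces terms $\int_0^1 t^{s-2+k}e^{-qt}\,dt$, each extending to a meromorphic function of $s$ with at worst a simple pole at $s=1-k$ (coming from the $t^{s-2+k}$ behaviour at $0$), while the remainder integral is holomorphic for $\mathrm{Re}(s)>1-N$. Letting $N\to\infty$ gives meromorphic continuation to all of $\mathbb{C}$; after dividing by $\Gamma(s)$, whose poles at $s=0,-1,-2,\dots$ cancel all the prospective poles at non‑positive integers, the only surviving singularity of $\zeta(s,q)$ is the simple pole at $s=1$. Its residue is $\lim_{s\to1}(s-1)\Gamma(s)^{-1}\int_0^1 t^{s-2}c_0 e^{-qt}\,dt$; since $c_0=1$, $\Gamma(1)=1$, and $\int_0^1 t^{s-2}e^{-qt}\,dt$ has residue $1$ at $s=1$ (the pole comes entirely from the $t^0$ term of $e^{-qt}$ integrated against $t^{s-2}$), the residue equals $1$.

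The main obstacle I anticipate is organizing the continuation argument cleanly — specifically, keeping careful track of which poles are introduced by the local expansion near $t=0$ and verifying that the $1/\Gamma(s)$ factor kills exactly the unwanted ones at $s=0,-1,-2,\dots$, leaving only $s=1$. Everything else (convergence, the Mellin identity, Fubini/Tonelli justifications) is standard; alternatively, one could cite Theorem \ref{teo_zieger} with $a_n\equiv 1$, $\lambda_n=n+q$, since the needed asymptotic expansion of $f(t)=\sum_{n\ge0}e^{-(n+q)t}=e^{-qt}(1-e^{-t})^{-1}$ as $t\to 0$ is precisely the Laurent expansion above, immediately yielding both the continuation and the residue $b_{-1}=1$.
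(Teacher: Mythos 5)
The paper does not prove this statement: Theorem \ref{Hurwitz_zeta} is quoted as a classical fact about the Hurwitz zeta function, with no proof supplied, so there is nothing internal to compare your argument against. Your proposal is the standard textbook proof (Mellin representation, split the integral at $t=1$, expand $t/(1-e^{-t})$ in its Taylor series at $0$, and let $1/\Gamma(s)$ kill the poles at non-positive integers), and it is correct. One small imprecision: each term $\int_0^1 t^{s-2+k}e^{-qt}\,dt$ does not have ``at worst a simple pole at $s=1-k$''; expanding $e^{-qt}$ shows it has simple poles at every integer $s=1-k-m$, $m\ge 0$. This does not damage the argument, since all of these except the $k=m=0$ pole at $s=1$ lie at non-positive integers and are cancelled by the zeros of $1/\Gamma(s)$, exactly as you say in the next clause; the residue computation at $s=1$ is also correct. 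Your closing remark that one could instead invoke Theorem \ref{teo_zieger} with $a_n\equiv 1$, $\lambda_n=n+q$ is consistent with how the paper itself relates the two results (it notes the Hurwitz zeta function as the common example satisfying the hypotheses of Theorems \ref{teo:e} and \ref{teo_zieger}).
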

 
\begin{remark}
We observe that if a Dirichlet series $L(s)$ with positive terms in the sum has holomorphic extension to $Re(s)>p,$ where $p$ is a simple pole, then $\lim_{s\to p^+} L(s)=+\infty.$
\end{remark}

We will construct and describe potentials of continuous type. Indeed, motivated on Theorem \ref{teo:e}, one can prove directly the following proposition.

\begin{proposition}
Let $P(x)$ be a real polynomial with degree $k$ such that $P>0$ in $(-1,\infty)$ and $P>1$ in $\N.$ If $\varphi$ is the locally constant potential defined by $\varphi |C_i= -\log P(i)$ for $i\in\mathbb{N},$ then  $s_{\infty}=\frac{1}{k}$ and $\phi$ is of continuous type.
\end{proposition}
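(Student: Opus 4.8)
The plan is to avoid any analytic continuation and reduce everything to the explicit pressure formula \eqref{Dirichlet} together with a comparison to the harmonic series; Theorem \ref{teo:e} serves only as motivation for the choice of example.

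\emph{Setting up.} First I would check that $\varphi$ is covered by the preceding results. Being locally constant, $\varphi$ has $V_n(\varphi)=0$ for every $n\ge 1$, hence summable variation, and it is trivially locally H\"older; moreover $P(i)>1$ for $i\in\N$ gives $\varphi|C_i=-\log P(i)<0$, so $\varphi$ is a non‑positive locally H\"older potential and Theorem \ref{bip} applies. Writing $\lambda_i:=P(i)\in(1,\infty)$, formula \eqref{Dirichlet} yields $e^{P_\sigma(t\varphi)}=\sum_{n=1}^{\infty}P(n)^{-t}$ for every $t\in\R$, with the convention that the left-hand side is $+\infty$ precisely when the series diverges.

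\emph{Locating $s_\infty$.} Since $\deg P=k$ and $P>0$ on $(-1,\infty)$, the leading coefficient of $P$ is positive and $P(n)/n^k$ tends to it; combined with $P(n)>1$, this produces constants $0<c_1\le c_2$ with $c_1 n^k\le P(n)\le c_2 n^k$ for \emph{all} $n\ge 1$. Hence for $t>0$ we get $c_2^{-t}\sum_{n}n^{-kt}\le\sum_{n}P(n)^{-t}\le c_1^{-t}\sum_{n}n^{-kt}$, while for $t\le 0$ the series $\sum_n P(n)^{-t}$ diverges trivially. As $\sum_n n^{-kt}$ converges exactly when $kt>1$, the series $\sum_n P(n)^{-t}$ converges exactly when $t>1/k$, so by the characterization noted after \eqref{Dirichlet} we obtain $s_\infty=\inf\{t:\sum_n P(n)^{-t}<\infty\}=1/k$.

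\emph{Continuous type.} It remains to verify the three conditions. Any $t_0>1/k$ gives $P_\sigma(t_0\varphi)<\infty$. At $t=s_\infty=1/k$ the lower bound gives $\sum_n P(n)^{-1/k}\ge c_2^{-1/k}\sum_n n^{-1}=\infty$, so $P_\sigma(s_\infty\varphi)=\infty$. For the limit, each map $t\mapsto P(n)^{-t}$ is nonincreasing because $P(n)>1$, so $t\mapsto\sum_n P(n)^{-t}$ is nonincreasing, and the monotone convergence theorem gives $\lim_{t\to(1/k)^+}\sum_n P(n)^{-t}=\sum_n P(n)^{-1/k}=\infty$; taking logarithms, $\lim_{t\to s_\infty^+}P_\sigma(t\varphi)=\infty$. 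Thus $\varphi$ is of continuous type. The only steps requiring a little care are the uniform two‑sided bound $c_1 n^k\le P(n)\le c_2 n^k$ valid for all $n\ge 1$ (not just asymptotically) and the interchange of limit and sum; neither is a genuine obstacle, which is exactly why the proposition admits a direct proof rather than one going through a meromorphic‑continuation theorem.
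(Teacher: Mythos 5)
Your proof is correct and complete. The paper itself gives no written argument for this proposition: it only says that, ``motivated'' by Theorem \ref{teo:e}, one can prove it directly, and the surrounding text (Theorem \ref{teo:e} together with the remark that a positive Dirichlet series blows up as $s$ approaches a simple pole from the right) strongly suggests the intended route is through the meromorphic continuation of $Z(s)=\sum_n P(n)^{-s}$, holomorphic for $\mathcal{R}e(s)>1/k$ with a pole at $s=1/k$. Your argument replaces all of that with the two-sided bound $c_1 n^k\le P(n)\le c_2 n^k$, comparison with the $p$-series, and monotone convergence for the boundary limit. This buys two things: it is entirely elementary and self-contained, and it actually covers the stated hypotheses in full generality --- Theorem \ref{teo:e} applies to polynomials of the form $\prod_j(x+\delta_j)$ with $\mathcal{R}e(\delta_j)>-1$, which is neither implied by nor equivalent to the assumptions ``$P>0$ on $(-1,\infty)$ and $P>1$ on $\N$'' (a real polynomial positive on $(-1,\infty)$ may well have complex roots with real part $\le -1$). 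The continuation route, by contrast, would yield the finer information about the nature of the singularity at $s=1/k$, which is what the paper uses elsewhere to distinguish continuous from discontinuous type; for the present statement that extra information is not needed, and your direct argument is the cleaner one. The two points you flag as needing care --- uniformity of the polynomial bounds down to $n=1$ and the interchange of limit and sum --- are both handled correctly (positivity of $P$ on $\N$ gives the former, monotonicity of $t\mapsto P(n)^{-t}$ for $P(n)>1$ gives the latter).
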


Similarly, based on Theorems \ref{teo_zieger} and \ref{Hurwitz_zeta}, we obtain the following proposition.

\begin{proposition}
Let $(\lambda_n)_n$ be a strictly increasing sequence of real numbers with  $1\leq \lambda_1$, growing at least as fast as some positive power of $n$. Assume that there exists  $\epsilon>0$ and a sequence of real numbers $(a_n)_n$  such that the function $f:(0,\epsilon)\to\mathbb{R}$ defined by $f(t):=\sum_{m=1}^{\infty} e^{-\lambda_m  t}$ has asymptotic expansion
$$
f(t)\sim \sum_{n=-1}^{\infty} a_n t^n \mbox{ as $t\to 0.$}
$$ 
If $\varphi$ is the locally constant potential defined by $\varphi |C_n:= -\log  \lambda_n$ for $n\in\mathbb{N},$ then
$s_{\infty}=1$ and $\varphi$ is of continuous type.
\end{proposition}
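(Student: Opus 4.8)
The plan is to rewrite the pressure as a Dirichlet series via \eqref{Dirichlet}, apply Theorem~\ref{teo_zieger} to locate its singularities, and then translate that analytic information into statements about $s_{\infty}$ and the continuity type.

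By \eqref{Dirichlet} we have $e^{P_{\sigma}(t\varphi)}=L(t)$ for every $t\in\R$, where
\[
L(s):=\sum_{n=1}^{\infty}\lambda_n^{-s}
\]
is a generalized Dirichlet series all of whose coefficients equal $1$. First I would check the three hypotheses of Theorem~\ref{teo_zieger}: hypothesis (1) is precisely the assumed power growth of $(\lambda_n)$; hypothesis (2) follows from it, since $\lambda_n\geq c\,n^{\delta}$ for large $n$ forces $L(s)$ to converge for real $s>1/\delta$; and hypothesis (3) is the assumed asymptotic expansion of $f$, with the coefficients called $b_n$ there being our $a_n$. Theorem~\ref{teo_zieger} then shows that $L$ has a meromorphic continuation to $\mathbb{C}$ whose only singularity is a simple pole at $s=1$ with residue $a_{-1}$. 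I would also record that $a_{-1}>0$: each summand of $f(t)=\sum_{m}e^{-\lambda_m t}$ increases to $1$ as $t\to 0^{+}$, so $f(t)\to+\infty$ by monotone convergence, while the expansion gives $f(t)=a_{-1}t^{-1}+O(1)$ near $0$; combining these forces $a_{-1}>0$, so $s=1$ is a genuine pole.

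Next I would pin down $s_{\infty}=\inf\{t:\sum_n\lambda_n^{-t}<\infty\}$ using the Mellin representation $L(t)=\Gamma(t)^{-1}\int_{0}^{\infty}u^{t-1}f(u)\,\dd u$, valid for real $t>0$ by Tonelli (both sides being simultaneously finite or infinite). Near $u=0$ one has $u^{t-1}f(u)=a_{-1}u^{t-2}+O(u^{t-1})$, which is integrable on $(0,1]$ exactly when $t>1$ (and non-integrable there when $0<t\le 1$, as $a_{-1}>0$), while for $u\geq 1$ the bound $f(u)\leq e^{\lambda_1}f(1)\,e^{-\lambda_1 u}$ makes the tail integral converge for every $t$. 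Hence $\sum_n\lambda_n^{-t}<\infty$ if and only if $t>1$, so $s_{\infty}=1$. (Alternatively, since $L$ has nonnegative coefficients, Landau's theorem identifies its abscissa of convergence with the real part of its rightmost singularity, which is $1$.)

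It remains to verify that $\varphi$ is of continuous type, using $s_{\infty}=1$ and, say, $t_0=2$: from the previous step $P_{\sigma}(2\varphi)=\log L(2)<\infty$. The map $t\mapsto L(t)$ is non-increasing on $(1,\infty)$, and by the observation following Theorem~\ref{Hurwitz_zeta}, applied to the positive-term series $L$, which is holomorphic for $\mathcal{R}e(s)>1$ with a simple pole at $s=1$, one has $\lim_{t\to1^{+}}L(t)=+\infty$; hence $\lim_{t\to1^{+}}P_{\sigma}(t\varphi)=+\infty$, and monotonicity then forces $L(1)=\sum_n\lambda_n^{-1}=+\infty$, i.e. $P_{\sigma}(s_{\infty}\varphi)=+\infty$. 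All three conditions defining continuous type are thus met. The step needing the most care is the passage from the meromorphic continuation supplied by Theorem~\ref{teo_zieger} to the exact value of $s_{\infty}$, that is, the precise range of real $t$ for which $\sum_n\lambda_n^{-t}$ converges; the Mellin representation (or Landau's theorem) is what makes this transparent.
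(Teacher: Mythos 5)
Your proof is correct and follows exactly the route the paper intends: the paper states this proposition with only the one-line attribution ``based on Theorems \ref{teo_zieger} and \ref{Hurwitz_zeta}'' (together with the remark on positive-term Dirichlet series with a simple pole), and your argument is precisely that strategy with the omitted details supplied. The additions you make --- verifying the hypotheses of Theorem \ref{teo_zieger}, showing the residue $a_{-1}$ is strictly positive so that $s=1$ is a genuine pole, and the Mellin/Landau step identifying $s_{\infty}=1$ exactly --- are all sound and are exactly what a complete write-up of the paper's sketch would require.
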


We now state a result on the meromorphic extension of certain Dirichlet series \cite[Theorem 1]{Grabner_Thuswaldner96} that is related to the construction of potentials of discontinuous type.

\begin{theorem} \label{thm:log}
Let $\eta$ and $\theta$ be real numbers, then the Dirichlet series
$$
\mathcal{L}_{\eta,\theta}(s)=\sum_{k=2}^{\infty}\frac{(\log k)^{\eta}}{(k(\log k)^{\theta})^s}
$$
admits an analytic continuation to the whole complex plane except at the line joining $1$ with $-\infty.$ This line gives a branch cut of the function, whose nature depends on the parameters. The singular expansion of the function around $s=1$ starts with 
$$
\begin{aligned}
\Gamma(\eta-\theta+1)(s-1)^{\theta-\eta-1} & \mbox{ for }\theta-\eta\notin \mathbb{N}\\
\frac{(-1)^{m-1}}{(m-1)!}(s-1)^{m-1}\log \frac{1}{s-1} & \mbox{ for }\theta-\eta=m \in \mathbb{N}.
\end{aligned}
$$
\end{theorem}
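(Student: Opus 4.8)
The plan is to realise $\mathcal{L}_{\eta,\theta}$ as a Mellin-type transform of the Riemann zeta function $\zeta$ and then read off both the analytic continuation and the singular behaviour at $s=1$ from the classical analytic structure of $\zeta$. First I would rewrite $\mathcal{L}_{\eta,\theta}(s)=\sum_{k\ge 2}(\log k)^{\eta-\theta s}k^{-s}$ and insert the elementary identity $(\log k)^{-w}=\Gamma(w)^{-1}\int_{0}^{\infty}u^{w-1}k^{-u}\,du$, valid for $\mathrm{Re}(w)>0$. Interchanging sum and integral (absolute convergence holds for $\mathrm{Re}(s)>1$ and $\mathrm{Re}(\theta s-\eta)>0$) yields
\[
\mathcal{L}_{\eta,\theta}(s)=\frac{1}{\Gamma(\theta s-\eta)}\int_{0}^{\infty}u^{\theta s-\eta-1}\bigl(\zeta(s+u)-1\bigr)\,du .
\]
Since $1/\Gamma$ is entire and, for fixed $s$ with $\mathrm{Re}(s)>1$, the series $\sum_{k\ge 2}(\log k)^{-w}k^{-s}$ converges and is entire in $w$, the auxiliary condition $\mathrm{Re}(\theta s-\eta)>0$ may be removed by continuation in $w$, so the displayed formula holds for every $s$ with $\mathrm{Re}(s)>1$.

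Next I would split the integral at $u=1$ and use $\zeta(s+u)-1=\tfrac{1}{s+u-1}+H(s+u)$ with $H(z):=\zeta(z)-1-\tfrac{1}{z-1}$ entire. The tail $\int_{1}^{\infty}u^{\theta s-\eta-1}\bigl(\zeta(s+u)-1\bigr)\,du$ converges absolutely (the integrand decays like $u^{\theta s-\eta-1}2^{-u}$) and is holomorphic in $s$ except where the ray $\{\,s+u:u\ge 1\,\}$ meets the pole $z=1$ of $\zeta$, that is, on $(-\infty,0]$. In $\int_{0}^{1}$, subtracting finitely many Taylor coefficients of $H(s+u)$ at $u=0$ (and treating $\tfrac{1}{s+u-1}$ separately) continues the integral to a meromorphic function of $\theta s-\eta$ whose poles lie in $\{0,-1,-2,\dots\}$ and are annihilated by the prefactor $1/\Gamma(\theta s-\eta)$; the only remaining obstruction is that the pole $u=1-s$ of $\tfrac{1}{s+u-1}$ enters $[0,1]$ precisely for $s\in[0,1]$, together with the branch point at $s=1$ coming from the endpoint $u=0$. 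Since $(-\infty,0]\cup[0,1]=(-\infty,1]$, and since one may deform the $u$-contour off the real axis around $u=1-s$ for nonreal $s$, the three pieces glue to a single holomorphic function on $\mathbb{C}\setminus(-\infty,1]$, and the two boundary values across $(-\infty,1]$ genuinely differ (by a residue of $\zeta$ on $(-\infty,0]$, and by the multivaluedness of $u\mapsto u^{\theta s-\eta-1}$ near $u=0$ at $s=1$); this is the asserted branch cut.

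For the singular expansion at $s=1$ only the term $\mathcal{I}(s):=\Gamma(\beta)^{-1}\int_{0}^{1}u^{\beta-1}(u+a)^{-1}\,du$ matters, where I write $a=s-1$ and $\beta=\theta s-\eta$, the two other pieces being analytic at $s=1$. Splitting $\int_{0}^{1}=\int_{0}^{a}+\int_{a}^{1}$, substituting $u=av$ in the first integral, expanding $(u+a)^{-1}$ geometrically in the second, and using the Beta integral $\int_{0}^{\infty}v^{\beta-1}(1+v)^{-1}\,dv=\Gamma(\beta)\Gamma(1-\beta)$ to resum, I would get
\[
\mathcal{I}(s)=a^{\beta-1}\,\Gamma(1-\beta)+\frac{1}{\Gamma(\beta)}\sum_{j\ge 0}\frac{(-1)^j a^j}{\beta-1-j}.
\]
When $\theta-\eta\notin\mathbb{N}$ no denominator vanishes as $s\to 1$, the sum is analytic there, and expanding $a^{\beta-1}\Gamma(1-\beta)$ in powers of $a=s-1$ gives the leading term $\Gamma(\eta-\theta+1)(s-1)^{\theta-\eta-1}$. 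When $\theta-\eta=m\in\mathbb{N}$, the summand $a^{\beta-1}\Gamma(1-\beta)$ and the $j=m-1$ term of the series each have a simple pole in $\beta$ at $\beta=m$, and their residues cancel; the surviving finite part, obtained from $\bigl(a^{\beta-1}-a^{m-1}\bigr)/(\beta-m)\to a^{m-1}\log a$ as $\beta\to m$ together with the residue $(-1)^m/(m-1)!$ of $\Gamma(1-\beta)$ at $\beta=m$ and $1/\Gamma(\beta)\to 1/(m-1)!$, equals $\frac{(-1)^{m-1}}{(m-1)!}(s-1)^{m-1}\log\frac{1}{s-1}$, while all further terms are of strictly higher order in $s-1$. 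This gives both cases of the singular expansion.

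The step I expect to be the main obstacle is the bookkeeping in the continuation: one has to organise the contour deformations so that the three pieces assemble into a genuinely single-valued holomorphic function on $\mathbb{C}\setminus(-\infty,1]$, check that the result is independent of the splitting point $u=1$, and secure the uniform bounds on $\zeta$ in vertical strips (and, via the functional equation, to the left of the critical strip) needed to justify absolute convergence and differentiation under the integral sign along the rays $\{\,s+u:u\ge 1\,\}$. A secondary but delicate point is the resonant case $\theta-\eta\in\mathbb{N}$: one must verify that the pole of $\Gamma(1-\theta s+\eta)$ at $s=1$ cancels exactly against a pole of the correction series, and track the constant through this cancellation to land on $\frac{(-1)^{m-1}}{(m-1)!}$.
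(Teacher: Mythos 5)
This theorem is not proved in the paper at all --- it is quoted from Grabner--Thuswaldner \cite{Grabner_Thuswaldner96} --- and your argument follows essentially the same route as that source: the Mellin representation $(\log k)^{-w}=\Gamma(w)^{-1}\int_0^\infty u^{w-1}k^{-u}\,du$ turns $\mathcal{L}_{\eta,\theta}(s)$ into $\Gamma(\theta s-\eta)^{-1}\int_0^\infty u^{\theta s-\eta-1}\bigl(\zeta(s+u)-1\bigr)\,du$, from which the continuation off the cut $(-\infty,1]$ follows by contour deformation and the singularity at $s=1$ comes from the collision of the pole $u=1-s$ with the endpoint $u=0$. Your closed form $\mathcal{I}(s)=a^{\beta-1}\Gamma(1-\beta)+\Gamma(\beta)^{-1}\sum_{j\ge 0}(-1)^j a^j/(\beta-1-j)$ and the residue cancellation in the resonant case $\theta-\eta=m\in\mathbb{N}$ both check out and reproduce exactly the two stated leading terms, so the proposal is sound modulo the routine justifications (uniform bounds, interchange of sum and integral, gluing of the three pieces) that you already flag.
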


\begin{remark} \label{rem:tilde}
We observe that if a Dirichlet series $L(s)$ with positive terms in the sum admits an analytic continuation to the whole complex plane except the line joining $1$ with $-\infty,$ then the map $[1,\infty) \ni s \to \tilde{L}(s)\in\mathbb{R}$ is continuous, where 
$$
\tilde{L}(s)=
\begin{cases}
L(s) & \mbox{ if }s>1;\\
\lim_{s\to 1^+} L(s) & \mbox{ if }s=1.\\
\end{cases}
$$
\end{remark}

From Theorem \ref{thm:log} and Remark \ref{rem:tilde}, we obtain the following proposition that allow us to construct and describe potentials of discontinuous type.

\begin{proposition}
Let $\theta >0.$ If $\varphi$ is the locally constant potential defined by $\varphi |C_n:= -\log \left((n+1)\log^{\theta} (n+1)\right)$ for $n\in\mathbb{N},$ then $s_{\infty}=1$ and $\varphi$ is of discontinuous type.
\end{proposition}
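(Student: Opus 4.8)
The plan is to read off both claims from the Dirichlet–series identity \eqref{Dirichlet} together with Theorem \ref{thm:log}. Write $\lambda_n:=(n+1)\log^{\theta}(n+1)$, so that $\varphi|C_n=-\log\lambda_n$; since $\log(n+1)>1$ for $n\ge 2$ we have $\lambda_n>1$ for all but finitely many $n$, and modifying $\varphi$ on those finitely many cylinders changes neither $s_\infty$ nor the continuity type, so we may treat $\varphi$ as a non-positive locally constant potential to which Theorem \ref{bip} applies. Substituting $k=n+1$ in \eqref{Dirichlet} gives
$$
e^{P_{\sigma}(t\varphi)}=\sum_{n=1}^{\infty}\lambda_n^{-t}=\sum_{k=2}^{\infty}\frac{1}{\bigl(k(\log k)^{\theta}\bigr)^{t}}=\mathcal{L}_{0,\theta}(t),
$$
which is exactly the generalized Dirichlet series of Theorem \ref{thm:log} with $\eta=0$.

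First I would determine $s_\infty$. By Theorem \ref{thm:pcms} and \eqref{Dirichlet} one has $s_\infty=\inf\{t:\sum_n\lambda_n^{-t}<\infty\}$, and an elementary comparison — the integral test applied to $\int^{\infty}x^{-t}(\log x)^{-\theta t}\dd x$, or the bounds $1\le(\log(n+1))^{\theta t}\le(n+1)^{\varepsilon}$ valid for large $n$ and every $\varepsilon>0$ — shows that $\sum_n\lambda_n^{-t}$ converges for $t>1$ and diverges for $t<1$. Hence $s_\infty=1$. Since $P_\sigma(2\varphi)=\log\sum_n\lambda_n^{-2}<\infty$, the potential $\varphi$ is not of infinite type.

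Next I would analyse the pressure at $t=s_\infty=1$. Applying Theorem \ref{thm:log} with $\eta=0$, the series $\mathcal{L}_{0,\theta}$ admits an analytic continuation to $\mathbb{C}$ minus the half-line joining $1$ with $-\infty$, which is a branch cut; in particular $s=1$ is not a pole. Since $\mathcal{L}_{0,\theta}$ is a Dirichlet series with positive terms, Remark \ref{rem:tilde} applies and yields that $\ell:=\lim_{s\to1^{+}}\mathcal{L}_{0,\theta}(s)$ exists and is finite, so that $\lim_{t\to s_\infty^{+}}P_\sigma(t\varphi)=\log\ell<\infty$. By the trichotomy definition, a finite right-hand limit of the pressure at $s_\infty$ means $\varphi$ is not of continuous type; together with $P_\sigma(2\varphi)<\infty$ this is precisely the statement that $\varphi$ is of discontinuous type. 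This is in contrast with the previous two propositions, where the continuation has a genuine pole at $s_\infty$ and the remark following Theorem \ref{Hurwitz_zeta} forces the limit to be $+\infty$, i.e. the continuous type.

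The argument is in essence a dictionary between the pressure and the series $\mathcal{L}_{0,\theta}$: the only points requiring any thought are recognizing \eqref{Dirichlet} as that series and locating the correct imported result. The genuine analytic input — the branch-cut continuation and the finiteness of the boundary value at $s=1$ — is provided off the shelf by Theorem \ref{thm:log} and Remark \ref{rem:tilde}, so the main obstacle is not mathematical but organisational: checking that the hypotheses of those two statements (real parameters $\eta,\theta$; positive terms; branch cut along $(-\infty,1]$) are met, which here is immediate. The only thing one must verify by hand is $s_\infty=1$, and that is routine.
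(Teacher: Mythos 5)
Your overall strategy --- rewriting $e^{P_{\sigma}(t\varphi)}$ as $\mathcal{L}_{0,\theta}(t)$ and importing Theorem \ref{thm:log} together with Remark \ref{rem:tilde} --- is exactly the route the paper takes (it offers no more detail than the citation of those two results), and your computation of $s_{\infty}=1$ and your handling of the finitely many cylinders on which $\varphi$ may fail to be non-positive are fine. The genuine gap is the step ``Remark \ref{rem:tilde} applies and yields that $\ell:=\lim_{s\to1^{+}}\mathcal{L}_{0,\theta}(s)$ exists and is finite.'' The existence of a branch-cut continuation does not by itself control whether the boundary value at the branch point is finite, and for $0<\theta\le 1$ it is not: the terms of $\mathcal{L}_{0,\theta}(s)$ increase monotonically as $s\downarrow 1$, so
\[
\lim_{s\to1^{+}}\mathcal{L}_{0,\theta}(s)=\sum_{k=2}^{\infty}\frac{1}{k(\log k)^{\theta}}=+\infty \qquad (0<\theta\le 1),
\]
which is also what the singular expansion in Theorem \ref{thm:log} shows, since $\Gamma(1-\theta)(s-1)^{\theta-1}\to+\infty$ for $0<\theta<1$ and $\log\frac{1}{s-1}\to+\infty$ for $\theta=1$. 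In that range both $P_{\sigma}(s_{\infty}\varphi)$ and $\lim_{t\to s_{\infty}^{+}}P_{\sigma}(t\varphi)$ are infinite while $P_{\sigma}(2\varphi)<\infty$, so $\varphi$ is of \emph{continuous} type there; your argument, taken at face value, would ``prove'' the opposite. The statement is really only correct for $\theta>1$, consistent with Lemma \ref{Lem_disc} (\ref{lem11_2}).

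Once you restrict to $\theta>1$, the conclusion no longer needs the analytic continuation at all: the integral test gives $\sum_{k\ge 2}\bigl(k(\log k)^{\theta}\bigr)^{-1}<\infty$, hence $P_{\sigma}(s_{\infty}\varphi)=P_{\sigma}(\varphi)<\infty$ directly, and the first disjunct in the definition of discontinuous type applies. So the fix is to restrict the parameter range and replace the appeal to Remark \ref{rem:tilde} by this one-line convergence check --- or, if you prefer to keep the analytic-continuation viewpoint, to verify from the singular expansion of Theorem \ref{thm:log} that the leading singular term tends to $0$ as $s\to 1^{+}$ precisely when $\theta>1$. The lesson is that ``branch point rather than pole at $s_{\infty}$'' is not equivalent to ``discontinuous type''; one must additionally check that the singularity is bounded from the right.
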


\section{Suspension flows over countable Markov shifts}
In this section we recall definitions and properties of the thermodynamic formalism for suspension flows over countable Markov shifts.

\subsection{Suspension semi-flows and invariant measures}
Let $(\Sigma, \sigma)$ be a countable Markov shift and let $\tau \colon \Sigma \to \R$ be a positive continuous function such that for every  $x\in\Sigma$ we have $\sum_{i=0}^{\infty}\tau(\sigma^i x)=\infty.$ Consider the space
\begin{equation*}\label{eq:flow phase }
Y= \{ (x,t)\in \Sigma  \times \R \colon 0 \le t \le\tau(x)\}
\end{equation*}
with the points $(x,\tau(x))$ and  $(\sigma(x),0)$ identified for
each $x\in \Sigma $. The \emph{suspension semi-flow} over $\sigma$
with \emph{roof function} $\tau$ is the semi-flow $\Phi = (
\varphi_t)_{t \ge 0}$ on $Y$ defined by
\begin{equation*}
 \varphi_t(x,s)= (x,
s+t) \ \text{whenever $s+t\in[0,\tau(x)]$.}
\end{equation*}
A probability measure $\mu$  on $Y$  is
\emph{$\Phi$-invariant} if $\mu(\varphi_t^{-1}A)= \mu(A)$ for every
$t \ge 0$ and every measurable set $A \subset Y$. Denote by $\M_\Phi$ the space of $\Phi$-invariant probability
measures on $Y.$
The space $\M_\Phi$ is closely related to the space $\M_\sigma$ of $\sigma$-invariant probability measures on $\Sigma $. Let
\begin{equation*} 
\M_\sigma(\tau):= \left\{ \mu \in \mathcal{M}_{\sigma}: \int \tau \text{d} \mu < \infty \right\}.
\end{equation*}
Note that if $\Sigma$ is compact then $\M_\sigma(\tau)=\M_{\sigma}$. A result by Ambrose and Kakutani \cite{ak} implies that if $m$ denotes the Lebesgue measure then
\begin{equation} \label{eq:meas}
\frac{(\mu \times m)|_{Y} }{(\mu \times m)(Y)} \in \M_{\Phi}.
\end{equation}
Moreover,  if  $(\Sigma , \sigma)$ is a sub-shift of finite type defined over a finite alphabet,  then equation \eqref{eq:meas} defines a bijection between $:\M_\sigma$ and 
$\M_{\Phi}$. When  $(\Sigma , \sigma)$ is a countable Markov shift and $\tau:\Sigma \to \R^+$ is  not  bounded above then there is a bijection between $\M_\sigma(\tau)$ and $M_{\Phi}$. Note though that there might be measure $\nu \in \M_{\sigma}$ such that
$\int \tau \ d \nu  = \infty$. In this case the measure obtained in  equation  \eqref{eq:meas} is an infinite flow invariant measure. The more subtle case is when $(\Sigma , \sigma)$ is a countable Markov shift and $\tau:\Sigma \to \R^+$ is  not  bounded away from zero, since it is possible that for an infinite (sigma-finite) $\sigma-$invariant measure $\nu$  we have $\int  \tau \ d \nu <\infty$.  In this case the measure $(\nu \times m)|_{Y} /(\nu \times m)(Y) \in \M_\Phi$.

\subsection{Thermodynamic formalism}

The entropy of a flow with respect to an invariant measure,  denoted $h_\Phi(\mu)$, can be defined as  the entropy of the corresponding time one map. The entropy of the flow is related to the entropy of the shift. The following formula was obtained by Abramov \cite{a} and later generalised by Savchenko \cite[Theorem 1]{sav}.  Let $\mu \in \mathcal{M}_\Phi$ be an ergodic measure  such that  $\mu=(\nu \times m)|_{Y} /(\nu \times m)(Y)$, where $\nu$ is a sigma-finite (finite or infinite) invariant measure for the shift with $\int \tau \ d\nu < \infty$. Then
\begin{equation}
h_{\Phi}(\mu)=\frac{h_{\sigma}(\nu)}{\int \tau  \text{d} \nu}.
\end{equation}
It also possible to relate the integral of a potential on the flow to a corresponding one on the base. Indeed, given a continuous function $g \colon Y\to\R$ we define the function
$\Delta_g\colon\Sigma\to\R$~by
\[
\Delta_g(x)=\int_{0}^{\tau(x)} g(x,t) \, ~{\rm d}t.
\]
The function $\Delta_g$ is also continuous, moreover if $\mu \in \M_{\Phi}$ is the normalisation of  $\nu \times m$ then
\begin{equation*} \label{eq:rela}
\int_{Y} g \, ~{\rm d} \mu = \frac{\int_\Sigma \Delta_g\, ~{\rm d}
\nu}{\int_\Sigma\tau \, ~{\rm d} \nu}.
\end{equation*}
Thermodynamic formalism for suspension flows over countable Markov shifts has been studied by several people (see for example \cite{bi1, ij, ijt, jkl, ke, sav}). The following result summarizes some of the results that have been obtained on  thermodynamic formalism for suspension flows over countable Markov shifts.

\begin{theorem} \label{pre}
Let $(\Sigma, \sigma)$ a topologically mixing countable Markov shift and $\tau:\Sigma \to \R^+$ a roof function of summable variations.  Let $(Y, \Phi)$ be the associated suspension semi-flow. Let $g:Y \to \R$ be a function such that $\Delta_g:\Sigma \to \R$ is locally H\"older. Then the following equalities hold
\begin{eqnarray*}
P_{\Phi}(g)&:=&\lim_{t \to \infty} \frac{1}{t} \log \left(\sum_{\phi_s(x,0)=(x,0), 0<s \leq t} \exp\left( \int_0^s g(\phi_k(x,0)) \text{d}k \right) \chi_{C_{i_0}}(x) \right) \\
&=& \inf\{t \in \R : P_{\sigma} (\Delta_g - t \tau) \leq 0\} =\sup \{t \in \R : P_{\sigma} (\Delta_g - t \tau) \geq 0\} \\
&=& \sup \{ P_{\sigma|K}( \phi) : K\in \cK \},\\
&=& \sup \left\{ h_{\mu}(\Phi) +\int_Y g \text{d} \mu : \mu\in
\mathcal{E}_{\Phi} \text{ and } -\int_Y g \, \text{d}\mu <\infty \right\},\end{eqnarray*}
where $\cK$ is the set of all compact and $\Phi-$invariant sets and  $P_K$ is the
classical topological pressure of the potential $\phi$ restricted to the
compact and $\sigma$-invariant set $K$ and  $\mathcal{E}_\Phi $ is the set of ergodic $\Phi-$invariant measures.
\end{theorem}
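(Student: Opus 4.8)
The plan is to establish the chain of equalities in Theorem~\ref{pre} by reducing each identity to the corresponding statement for the base shift (Theorem~\ref{thm:pcms}) via the standard ``pressure-zero'' characterization of flow pressure. First I would prove the variational-type identity
$$\inf\{t\in\R : P_{\sigma}(\Delta_g - t\tau)\le 0\} = \sup\{t\in\R : P_{\sigma}(\Delta_g - t\tau)\ge 0\}$$
by showing that the map $t\mapsto P_{\sigma}(\Delta_g - t\tau)$ is strictly decreasing (indeed, by Theorem~\ref{thm:pcms} each $P_{\sigma}$ is a supremum of $h(\nu)+\int(\Delta_g-t\tau)\,d\nu$, and since $\tau>0$ has summable variation so is bounded below away from zero on each cylinder, increasing $t$ strictly decreases the functional on any measure charging positive $\tau$-integral), together with continuity/convexity of the pressure function in $t$ on the interval where it is finite. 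Care is needed because $P_{\sigma}(\Delta_g - t\tau)$ may equal $+\infty$ for small $t$; the inf and sup still coincide as the unique $t$ where the pressure function crosses zero, using that $P_{\sigma}(\Delta_g - t\tau)\to -\infty$ as $t\to\infty$ (a consequence of $\int\tau\,d\nu$ being bounded below on the relevant measures, or else the roof integral tending to infinity).

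Next I would identify this common value with the measure-theoretic supremum
$$\sup\Bigl\{h_{\mu}(\Phi)+\textstyle\int_Y g\,d\mu : \mu\in\cE_{\Phi},\ -\int_Y g\,d\mu<\infty\Bigr\}.$$
The key tool here is the Abramov--Savchenko formula $h_{\Phi}(\mu) = h_{\sigma}(\nu)/\int\tau\,d\nu$ and the identity $\int_Y g\,d\mu = \int\Delta_g\,d\nu / \int\tau\,d\nu$, valid for $\mu$ the normalization of $\nu\times m$ with $\nu\in\M_{\sigma}(\tau)$. For such $\mu$ and any $t\in\R$,
$$h_{\Phi}(\mu)+\int_Y g\,d\mu - t = \frac{h_{\sigma}(\nu)+\int(\Delta_g-t\tau)\,d\nu}{\int\tau\,d\nu},$$
so $h_{\Phi}(\mu)+\int_Y g\,d\mu\ge t$ holds for some ergodic $\mu$ exactly when $h_{\sigma}(\nu)+\int(\Delta_g-t\tau)\,d\nu\ge 0$ for some $\nu$, i.e.\ when $P_{\sigma}(\Delta_g - t\tau)\ge 0$; taking the supremum over $t$ gives the claimed equality. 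One must be slightly careful that the bijection $\nu\leftrightarrow\mu$ between $\M_{\sigma}(\tau)$ and $\M_{\Phi}$ (recalled in the previous subsection) also matches ergodic measures to ergodic measures, and that the finiteness conditions $-\int_Y g\,d\mu<\infty$ and $-\int(\Delta_g - t\tau)\,d\nu<\infty$ correspond under this bijection.

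For the approximation-by-compacta identity $\sup\{P_{\sigma|K}(\phi):K\in\cK\}$ I would invoke the corresponding statement in Theorem~\ref{thm:pcms} applied to $\Delta_g - t\tau$ together with the classical fact that for a continuous flow on a compact invariant set the flow pressure equals $\inf\{t: P_{\sigma|K}(\Delta_g - t\tau)\le 0\}$ (Bowen--Ruelle / Abramov in the compact setting), then pass to the supremum over $K$, checking that the sup of the compact-model flow pressures coincides with the sup of $\{t: P_{\sigma}(\Delta_g-t\tau)\ge 0\}$; exhaustion of $\Sigma$ by compact $\sigma$-invariant subsets makes the two suprema agree. Finally, for the defining periodic-orbit limit, I would set up the bijection between periodic orbits of $\Phi$ of period at most $t$ and periodic points of $\sigma$ weighted by return times, so that the flow partition sum becomes $\sum_{\sigma^n x = x}\exp(\sum_{i<n}(\Delta_g - P_{\Phi}(g)\tau)(\sigma^i x))$ evaluated at the critical parameter, and deduce the limit from the known growth rate of the base partition function at pressure zero (Sarig's theorem). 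The main obstacle I anticipate is the first equality --- controlling the periodic-orbit sum in the non-compact setting, where one cannot simply quote the compact Bowen--Ruelle argument and must instead carefully relate the flow Birkhoff sums along closed orbits to base Birkhoff sums of $\Delta_g - t\tau$ and invoke summable variation of $\tau$ and local H\"older continuity of $\Delta_g$ to control distortion; the other equalities are then essentially bookkeeping on top of Theorems~\ref{thm:pcms} and the Abramov--Savchenko relation.
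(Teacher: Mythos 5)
The paper itself offers no proof of Theorem \ref{pre}: it is stated explicitly as a summary of results from \cite{bi1,ij,ijt,jkl,ke,sav}, so there is no in-paper argument to compare against; your overall strategy (pressure-zero characterization on the base, Abramov--Savchenko, approximation by compact subsystems) is indeed the strategy of those references. That said, two steps in your sketch have genuine gaps. The first concerns the periodic-orbit limit. You propose to rewrite the flow partition sum as the base partition sum of $\Delta_g - P_{\Phi}(g)\tau$ ``evaluated at the critical parameter'' and to deduce the limit ``from the known growth rate of the base partition function at pressure zero.'' But in the non-compact setting one can have $P_{\sigma}(\Delta_g - P_{\Phi}(g)\tau) < 0$: this is precisely the discontinuous-type phenomenon this paper is built around (in Example \ref{ex:1}, $h(\Phi_2)=1$ while $P_{\sigma}(-\tau_2)<0$). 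At the critical parameter the base partition function then decays exponentially and cannot by itself yield the growth rate $P_{\Phi}(g)$. The working argument is asymmetric: for the upper bound one takes any $t$ with $P_{\sigma}(\Delta_g-t\tau)<0$ and bounds the flow sum over orbits of period at most $T$ by $e^{tT}$ times the (summable) base partition sums of $\Delta_g-t\tau$; for the lower bound one restricts to compact invariant subsystems, where Bowen--Ruelle applies and the pressure function genuinely crosses zero, and takes suprema. So the compact-approximation identity is an ingredient of the first equality rather than a consequence of it, and your plan as written would fail exactly in the discontinuous case.

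The second gap is in the variational principle. You only consider $\mu\in\cE_{\Phi}$ of the form $\overline{\nu\times m}$ with $\nu\in\M_{\sigma}(\tau)$ a probability measure. As the paper recalls, when $\tau$ is not bounded away from zero (which is allowed here: $\tau$ is merely positive with summable variations and non-summable orbit sums), $\M_{\Phi}$ also contains normalizations of $\nu\times m$ for \emph{infinite} sigma-finite $\sigma$-invariant $\nu$ with $\int\tau\,d\nu<\infty$, and such measures lie in $\cE_{\Phi}$. Since $P_{\sigma}(\Delta_g-t\tau)$ is a supremum over invariant \emph{probability} measures only, you must separately show that these extra measures satisfy $h_{\Phi}(\mu)+\int_Y g\,d\mu\le P_{\Phi}(g)$; this requires Savchenko's extension of the Abramov formula to sigma-finite measures together with an inducing argument, and is the delicate content of \cite{ijt} and \cite{jkl}. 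Without it the supremum over $\cE_{\Phi}$ could a priori exceed $\sup\{t: P_{\sigma}(\Delta_g-t\tau)\ge 0\}$. The remaining identities (the equality of the inf and the sup, and the matching of finiteness conditions under the measure correspondence) are handled correctly in your sketch.
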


\begin{corollary} \label{entropia}
Let $(\Sigma, \sigma)$ a topologically mixing countable Markov shift and $\tau:\Sigma \to \R^+$ a roof function of summable variations.  Let $(Y, \Phi)$ be the associated suspension flow. The topological entropy of the semi-flow is given by
\begin{equation*}
h(\Phi)=\inf\{t \in \R : P_{\sigma} (- t \tau) \leq 0\}.
\end{equation*}
\end{corollary}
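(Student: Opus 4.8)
The plan is to obtain the statement as an immediate specialization of Theorem \ref{pre} to the zero potential $g\equiv 0$ on $Y$. First I would recall that the topological entropy $h(\Phi)$ of the semi-flow coincides with the topological pressure $P_{\Phi}(0)$: this is either taken as the definition of $h(\Phi)$ in the present (non-compact) framework, or follows from the variational-principle line of Theorem \ref{pre}, which for $g\equiv 0$ reads
\[
P_{\Phi}(0)=\sup\left\{ h_{\mu}(\Phi):\mu\in\mathcal{E}_{\Phi}\right\},
\]
since the constraint $-\int_Y g\,\mathrm{d}\mu<\infty$ is vacuous when $g\equiv0$; the right-hand side is exactly $h(\Phi)$ by the variational principle for the entropy of the semi-flow.

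Next I would verify that $g\equiv 0$ satisfies the hypotheses of Theorem \ref{pre}. The standing assumptions ($(\Sigma,\sigma)$ topologically mixing and $\tau$ of summable variations) are already in place, and the associated function on the base is
\[
\Delta_g(x)=\int_0^{\tau(x)} 0\,\mathrm{d}t=0\qquad\text{for every }x\in\Sigma,
\]
which is constant, hence trivially locally H\"older. Therefore Theorem \ref{pre} applies and gives
\[
P_{\Phi}(0)=\inf\{t\in\R: P_{\sigma}(\Delta_g-t\tau)\le 0\}=\inf\{t\in\R: P_{\sigma}(-t\tau)\le 0\}.
\]
Combining this with $h(\Phi)=P_{\Phi}(0)$ yields the claimed formula.

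The only point that is not a pure substitution is the identification $h(\Phi)=P_{\Phi}(0)$: in the non-compact setting one must make sure that the notion of topological entropy of the semi-flow being used is the Gurevich/variational one appearing in Theorem \ref{pre} (equivalently, the periodic-orbit limit with $g\equiv 0$), so that the two quantities genuinely agree. Once this is granted — and it is part of the thermodynamic formalism recalled in this section — no estimates remain and the corollary follows.
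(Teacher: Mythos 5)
Your proposal is correct and is exactly the argument the paper intends: the corollary is stated without proof precisely because it is the specialization of Theorem \ref{pre} to $g\equiv 0$, for which $\Delta_g\equiv 0$ is trivially locally H\"older and $h(\Phi)=P_{\Phi}(0)$ by the variational characterization. Your added remark about checking that the notion of entropy used is the Gurevich/variational one is a sensible precaution but does not change the substance.
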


Recall that a measure $\mu \in \mathcal{E}_{\Phi}$ is called an \emph{equilibrium measure} for $g$ if $P_\Phi(g)= h(\mu) + \int g \ d \mu$.  
The following result is a summary of  \cite[Theorem 3.4 and  3.5]{ijt},

\begin{theorem} \label{thm_eqstate}
Let $\Phi$ be a finite entropy suspension semi-flow on $Y$ defined over a countable Markov
shift  $(\Sigma, \sigma)$  and a locally H\"older roof function $\tau$. Let $g \colon Y \to \R$ be a continuous function such
that $\Delta_g$ is locally H\"older. In the following cases there exists an equilibrium measure for $g$;
\begin{enumerate}
\item If $P_\sigma(\Delta_g -P_{\Phi}(g) \tau)=0$ and $\Delta_g -P_{\Phi}(g) \tau$ has an equilibrium measure $\nu_g$ satisfying $\int \tau ~{\rm d} \nu_g < \infty$;
\item   If $P_\sigma(\Delta_g -P_{\Phi}(g) \tau)=0$ and the potential $\Delta_g -P_{\Phi}(g) \tau$ has an infinite Ruelle-Perron-Frobenius measure $\nu_g$ (i.e. $\nu_g=hm$ where $h$ and $m$ are the density and the conformal measure provided by the Ruelle-Perron-Frobenius Theorem with pontential $g$)  and  $\int \tau ~{\rm d}\nu_g < \infty$.
   \end{enumerate}
In any other case the potential $g$ does not have an equilibrium measure. Moreover, every potential $g$, for which $\Delta_g$ is locally H\"older, has at most one equilibrium state.
\end{theorem}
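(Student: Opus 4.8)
The plan is to transfer the problem from the flow $(Y,\Phi)$ to the base $(\Sigma,\sigma)$ via the Abramov--Savchenko formulas, and then to read off the answer from the thermodynamic formalism for countable Markov shifts recalled above. Write $P:=P_{\Phi}(g)$; I treat the case $P<\infty$, the case $P=\infty$ being handled by a similar but simpler argument. The first step is to recall that every $\mu\in\M_{\Phi}$ has the form $\mu=(\nu\times m)|_{Y}/(\nu\times m)(Y)$ for a $\sigma$-invariant, $\sigma$-finite (finite or infinite) measure $\nu$ with $\int_{\Sigma}\tau\,\mathrm{d}\nu<\infty$, and conversely; moreover $(\nu\times m)(Y)=\int_{\Sigma}\tau\,\mathrm{d}\nu$, so the normalisation is legitimate precisely because $\int\tau\,\mathrm{d}\nu<\infty$. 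Combining the Abramov--Savchenko formula $h_{\Phi}(\mu)=h_{\sigma}(\nu)/\int\tau\,\mathrm{d}\nu$ (with $h_{\sigma}$ the Krengel entropy when $\nu$ is infinite) with the identity $\int_{Y}g\,\mathrm{d}\mu=\int_{\Sigma}\Delta_g\,\mathrm{d}\nu/\int_{\Sigma}\tau\,\mathrm{d}\nu$ gives
\[
h_{\Phi}(\mu)+\int_{Y}g\,\mathrm{d}\mu-P=\frac{1}{\int_{\Sigma}\tau\,\mathrm{d}\nu}\left(h_{\sigma}(\nu)+\int_{\Sigma}(\Delta_g-P\tau)\,\mathrm{d}\nu\right).
\]
Hence $\mu$ is an equilibrium measure for $g$ if and only if $h_{\sigma}(\nu)+\int_{\Sigma}(\Delta_g-P\tau)\,\mathrm{d}\nu=0$; when $\nu$ is a probability measure this says exactly that $\nu$ is an equilibrium measure for the locally H\"older potential $\Delta_g-P\tau$, and in particular forces $P_{\sigma}(\Delta_g-P\tau)=0$.

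Next I would invoke Theorem \ref{pre}, together with the monotonicity and convexity of $t\mapsto P_{\sigma}(\Delta_g-t\tau)$, to conclude $P_{\sigma}(\Delta_g-P\tau)\le 0$, and then split into two cases. If $P_{\sigma}(\Delta_g-P\tau)<0$: the variational principle of Theorem \ref{thm:pcms} gives $h_{\sigma}(\nu)+\int(\Delta_g-P\tau)\,\mathrm{d}\nu\le P_{\sigma}(\Delta_g-P\tau)<0$ for every probability $\nu$, and the corresponding inequality for infinite $\sigma$-finite $\nu$ (obtained by exhausting $\Sigma$ with compact invariant sets and using the Krengel entropy) excludes the infinite case as well; by the displayed identity $g$ then has no equilibrium measure, in agreement with the ``in any other case'' clause. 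If $P_{\sigma}(\Delta_g-P\tau)=0$: the identity shows that $g$ has a (normalised) equilibrium measure exactly when $\Delta_g-P\tau$ admits either (i) an equilibrium probability measure $\nu_g$ with $\int\tau\,\mathrm{d}\nu_g<\infty$, which is case~(1) and one simply normalises $\nu_g\times m$; or (ii) an infinite Ruelle-Perron-Frobenius measure $\nu_g=hm$ with $\int\tau\,\mathrm{d}\nu_g<\infty$, which by Sarig's theory of positive recurrent potentials still satisfies $h_{\sigma}(\nu_g)+\int(\Delta_g-P\tau)\,\mathrm{d}\nu_g=0$ in the Krengel sense, so again normalising $\nu_g\times m$ (now legitimate because $(\nu_g\times m)(Y)=\int\tau\,\mathrm{d}\nu_g<\infty$) produces a flow equilibrium; this is case~(2). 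In every remaining situation --- no finite equilibrium, and either no infinite RPF measure or an infinite RPF measure with $\int\tau\,\mathrm{d}\nu_g=\infty$ --- the identity leaves no candidate, so $g$ has no equilibrium measure.

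For uniqueness, the potential $\Delta_g-P\tau$ has summable variations, so by Buzzi--Sarig it has at most one equilibrium probability measure, and by Sarig's theory its conformal/eigen-structure is unique, so there is at most one measure of the form $hm$; since $\nu\mapsto(\nu\times m)|_{Y}/(\nu\times m)(Y)$ is injective on the relevant space of base measures, $g$ has at most one equilibrium state. One also has to rule out the coexistence of a \emph{finite} equilibrium and a \emph{genuinely infinite} RPF measure, which follows from the recurrence dichotomy (transient / null recurrent / positive recurrent) applied to $\Delta_g-P\tau$. The step I expect to be the main obstacle is the careful handling of infinite $\sigma$-finite invariant measures: justifying the Abramov--Savchenko identity and the variational inequality with the Krengel entropy in that regime, and identifying precisely when the RPF measure of a (null or positive) recurrent potential yields a \emph{finite} flow-invariant measure, i.e.\ the exact role of the hypothesis $\int\tau\,\mathrm{d}\nu_g<\infty$ in case~(2).
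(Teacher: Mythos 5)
First, a point of reference: the paper does not actually prove this statement --- it is presented as ``a summary of \cite[Theorem 3.4 and 3.5]{ijt}'', so there is no in-house argument to compare against. Your strategy --- transfer everything to the base via the Ambrose--Kakutani correspondence and the Abramov--Savchenko formula, obtain the identity
\[
h_{\Phi}(\mu)+\int_{Y} g\,\mathrm{d}\mu-P_{\Phi}(g)=\Bigl(\smallint\tau\,\mathrm{d}\nu\Bigr)^{-1}\Bigl(h_{\sigma}(\nu)+\smallint(\Delta_g-P_{\Phi}(g)\tau)\,\mathrm{d}\nu\Bigr),
\]
and reduce to the thermodynamics of the potential $\Delta_g-P_{\Phi}(g)\tau$ --- is exactly the route taken in that reference, so the architecture is right.

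That said, two steps as written are genuine gaps rather than routine omissions. First, you claim $P_{\sigma}(\Delta_g-P_{\Phi}(g)\tau)\le 0$ follows from ``monotonicity and convexity'' of $t\mapsto P_{\sigma}(\Delta_g-t\tau)$. It does not: a convex extended-real-valued function may jump \emph{up} at the left endpoint of its finiteness interval, and the delicate case is precisely $P_{\Phi}(g)=s_{\infty}$. What saves the claim is lower semicontinuity of $t\mapsto P_{\sigma}(\Delta_g-t\tau)$, coming from the approximation property $P_{\sigma}=\sup_{K}P_{K}$ in Theorem \ref{thm:pcms} (each $P_{K}$ being continuous in $t$). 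Without this, your case analysis is incomplete: the sub-case $P_{\sigma}(\Delta_g-P_{\Phi}(g)\tau)\in(0,\infty]$ is never excluded, and your displayed identity would then wrongly appear to permit equilibrium measures in a situation the theorem forbids. Second, everything involving infinite $\sigma$-finite base measures --- the Abramov formula with Krengel entropy, the variational inequality needed to rule out equilibrium measures when $P_{\sigma}(\Delta_g-P_{\Phi}(g)\tau)<0$, the verification that the infinite Ruelle--Perron--Frobenius measure of a recurrent potential satisfies $h_{\sigma}(\nu_g)+\int(\Delta_g-P_{\Phi}(g)\tau)\,\mathrm{d}\nu_g=0$, and the exclusion of a coexisting finite equilibrium and infinite RPF measure --- is asserted rather than proved, and you flag it yourself as the obstacle. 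That is not a side issue: it is the actual content of \cite[Theorem 3.5]{ijt}, handled there via inducing and Sarig's recurrence trichotomy, and without it case (2), the ``in any other case'' clause, and the uniqueness assertion remain unproved.
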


A throughout account of the case in which $\Sigma$ is a compact sub-shift of finite type can be found in \cite{pp}. 

\section{Time change for flows.}
We begin this section showing that two suspension flows defined over the same base system are strongly related. Not only one is orbit equivalent to the other, but one is a time change of the other. Indeed, let $\Sigma$ be a fixed transitive  countable Markov shift and let $\tau_1: \Sigma \to \R$ and $\tau_2: \Sigma \to \R$ be two positive roof functions. Denote by $(Y_1, \Phi_1)$ and $(Y_2, \Phi_2)$ the corresponding semi-flows

\begin{lemma}
The semi-flow $(Y_1, \Phi_1)$ is a time change of  $(Y_2, \Phi_2)$.
\end{lemma}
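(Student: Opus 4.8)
The plan is to exhibit an explicit orbit-preserving homeomorphism $H\colon Y_1 \to Y_2$ and a continuous positive reparametrization of time that conjugates $\Phi_1$ to the time-changed flow $\Phi_2$. First I would work on the base: since both suspension spaces sit over the \emph{same} shift $(\Sigma,\sigma)$, every point of $Y_i$ is represented (uniquely, away from the identification) as $(x,s)$ with $x\in\Sigma$ and $0\le s\le\tau_i(x)$. The natural candidate for $H$ is the fiberwise affine rescaling $H(x,s) = \bigl(x,\ \tfrac{\tau_2(x)}{\tau_1(x)}\, s\bigr)$, which maps the fiber over $x$ in $Y_1$ bijectively onto the fiber over $x$ in $Y_2$, sends $(x,0)$ to $(x,0)$ and $(x,\tau_1(x))$ to $(x,\tau_2(x))$, and therefore respects the identifications $(x,\tau_i(x))\sim(\sigma x,0)$. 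Continuity of $H$ and of its inverse follows from positivity and continuity of $\tau_1,\tau_2$ (note $\tau_1,\tau_2>0$ everywhere, so the ratio is well defined and continuous). Thus $H$ is a homeomorphism carrying $\Phi_1$-orbits onto $\Phi_2$-orbits, because both flows simply move points "up the fiber" and then across the identification, i.e.\ $H$ maps the unparametrized orbit through $(x,0)$ in $Y_1$ onto the unparametrized orbit through $(x,0)$ in $Y_2$.

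Next I would pin down the time-change function. Recall (or adopt) the definition: $(Y_1,\Phi_1)$ is a time change of $(Y_2,\Phi_2)$ if there is a homeomorphism $H$ as above and a continuous cocycle $\alpha\colon Y_1\times\R\to\R$, increasing and onto in the second variable for each fixed first variable, with $\alpha(p,0)=0$, such that $H(\varphi^1_t(p)) = \varphi^2_{\alpha(p,t)}(H(p))$ for all $p\in Y_1$, $t\ge 0$. Equivalently, it suffices to produce a strictly positive continuous $\rho\colon Y_1\to\R$ (the "infinitesimal speed ratio") and define $\alpha(p,t) = \int_0^t \rho(\varphi^1_u(p))\,du$. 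On a fiber over $x$, the $\Phi_1$-flow moves at unit speed through a segment of length $\tau_1(x)$ while $H$ stretches that segment to length $\tau_2(x)$; so the correct local speed is $\rho(x,s) = \tau_2(x)/\tau_1(x)$, independent of $s$. With this choice $\alpha(p,t)$ is continuous in $(p,t)$, strictly increasing in $t$, satisfies $\alpha(p,0)=0$, and one checks $\alpha(p,\cdot)$ is onto $\R_{\ge0}$ using the hypothesis $\sum_{i\ge0}\tau_1(\sigma^i x)=\infty$ (and the analogous divergence for $\tau_2$, which follows since the ratio is bounded on each cylinder but one still needs a genuine argument here) so that infinite $\Phi_1$-time maps to infinite $\Phi_2$-time. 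The verification of the conjugacy identity $H\circ\varphi^1_t = \varphi^2_{\alpha(\cdot,t)}\circ H$ is then a direct computation: for small $t$ it is immediate from the affine rescaling on a single fiber, and for general $t$ one passes through the identifications one fiber at a time, using that both $H$ and $\alpha$ behave consistently under the equivalence $(x,\tau_i(x))\sim(\sigma x,0)$.

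The main obstacle I anticipate is not the algebra of the conjugacy — that is routine once $H$ and $\rho$ are written down — but rather the \emph{completeness/surjectivity of the time change}, i.e.\ showing that $\alpha(p,\cdot)$ maps $[0,\infty)$ onto $[0,\infty)$ so that the time-changed flow is genuinely a flow defined for all time and not merely a local flow. This is exactly where the non-compactness of $\Sigma$ bites: one must rule out the reparametrized clock running to a finite limit in infinite original time, which is controlled by the divergence hypothesis on the roof functions together with the fact that along a forward orbit one accumulates $\int_0^t \rho(\varphi^1_u(p))\,du = \sum (\text{heights }\tau_2(\sigma^i x))$ up to a bounded remainder, and $\sum_i \tau_2(\sigma^i x) = \infty$ is part of the standing assumption on roof functions. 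A secondary minor point is to confirm that $H$ and $H^{-1}$ are honestly continuous at the quotient identifications (not just on each closed fiber), which is a standard quotient-topology check given continuity of $\tau_1/\tau_2$ and $\tau_2/\tau_1$. I would organize the write-up as: (1) define $H$, check it is a well-defined homeomorphism; (2) define $\rho$ and $\alpha$, check the cocycle properties including surjectivity in $t$; (3) verify the conjugacy identity fiber by fiber; (4) conclude.
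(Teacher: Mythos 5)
Your map $H$ is exactly the map $\pi(x,s)=\bigl(x,\tfrac{\tau_2(x)}{\tau_1(x)}s\bigr)$ that the paper uses, and the rest of your write-up just fills in routine verifications (well-definedness at the identifications, the cocycle $\alpha$, surjectivity of the reparametrization) that the paper leaves implicit. Correct, and essentially the same approach.
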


\begin{proof}
The map $\pi:Y_1 \to Y_2$ defined by
\begin{equation*}
\pi(x,s)=\left(x ,  \frac{\tau_2(x)}{\tau_1(x)} s  \right),
\end{equation*}
preserves the orbit structure. It actually send leafs to leafs and corresponds to the time change. 
\end{proof}
It should be stressed though that the spaces $Y_1$ and  $Y_2$ are different. With this time change map we can relate potentials defined in $Y_1$ with those defined in $Y_2$.
Let $\psi_2: Y_2 \to \R$ and define  $\psi_1: Y_1 \to \R$ by $\psi_1(x,r):= \psi_2 \circ \pi (x,r)$. Note that
\begin{equation*}
\Delta_{\psi_2}(x)= \int_0^{\tau_2(x)} \psi_2(x,r) \ dr \text{ and }
\Delta_{\psi_1}(x)= \int_0^{\tau_1(x)} \psi_2\left(x,\frac{\tau_2(x)}{\tau_1(x)}r  \right) \ dr.
\end{equation*}

\begin{remark}
Note that both functions are equal, that is $\Delta_{\psi_2}(x)=\Delta_{\psi_1}(x)$. For simplicity we will denote it by $\Delta_{\psi}$.
\end{remark}

\subsection{The compact setting: preservation of thermodynamic quantities}
This sub-section is devoted to prove that when the space is compact, thermodynamic quantities are preserved by regular time changes. Indeed, we prove that the pressure changes at most by a factor that depends on the  quotient of the roof functions. The closer this quotient is to one the less the pressure changes. Note that  thermodynamic formalism for Axiom A flows was studied by Bowen and Ruelle \cite{br} making use of the fact that these flows can be modeled  by suspension flows over sub-shifts of finite type with a H\"older roof function. Thus, the following result not only shows that thermodynamic quantities are preserved by regular time changes at a symbolic level but  also in  the differentiable category of Axiom A flows.

\begin{theorem} \label{thm: AxiomA}
Let $(\Sigma, \sigma)$ be a transitive sub-shift of finite type defined over a finite alphabet and let $\tau_1: \Sigma \to \R$ and $\tau_2: \Sigma \to \R$ be two positive H\"older roof functions. Denote by $(Y_1, \Phi_1)$ and $(Y_2, \Phi_2)$ the corresponding semi-flows. Let $\psi_2: Y_2 \to \R$ be a potential such that $\Delta_{\psi_2}$ is a H\"older function and define  $\psi_1(x,r):= \psi_2 \circ \pi (x,r)$. Then there exists a constant $C>0$ such that
\begin{equation*}
 \frac{P_{\Phi_1}(\psi_1)}{C} \leq P_{\Phi_2}(\psi_2) \leq CP_{\Phi_1}(\psi_1).
 \end{equation*}
Moreover, both $\psi_1$ and $\psi_2$ have a unique equilibrium state.
\end{theorem}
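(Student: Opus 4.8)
The plan is to reduce everything, via Theorem~\ref{pre}, to the implicit equation defining the pressure of a suspension flow. Recall from the Remark above that $\Delta_{\psi_1}=\Delta_{\psi_2}=:\Delta_\psi$; this is the only place the specific form of the time change enters. Theorem~\ref{pre} then gives
\[
P_{\Phi_i}(\psi_i)=\inf\{t\in\R: P_\sigma(\Delta_\psi-t\tau_i)\le 0\},\qquad i=1,2,
\]
and in the compact transitive subshift case $\Delta_\psi,\tau_1,\tau_2$ are bounded H\"older functions with $\tau_i>0$, so each map $t\mapsto P_\sigma(\Delta_\psi-t\tau_i)$ is finite, continuous and strictly decreasing with limits $+\infty$ and $-\infty$ at $\mp\infty$; hence $p_i:=P_{\Phi_i}(\psi_i)$ is the unique root of $P_\sigma(\Delta_\psi-t\tau_i)=0$.

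First I would fix, using compactness of $\Sigma$ together with positivity and continuity of $\tau_1,\tau_2$, constants $0<a\le b<\infty$ with $a\,\tau_1\le\tau_2\le b\,\tau_1$ on $\Sigma$ (take $a=\min(\tau_2/\tau_1)$, $b=\max(\tau_2/\tau_1)$). Then I would feed this into the monotonicity of the Gurevich pressure in the potential. If $p_1\ge 0$: from $\tau_2\le b\tau_1$ one gets $\Delta_\psi-(p_1/b)\tau_2\ge\Delta_\psi-p_1\tau_1$, hence $P_\sigma(\Delta_\psi-(p_1/b)\tau_2)\ge P_\sigma(\Delta_\psi-p_1\tau_1)=0$, and since $t\mapsto P_\sigma(\Delta_\psi-t\tau_2)$ is decreasing this forces $p_2\ge p_1/b$; symmetrically $\tau_2\ge a\tau_1$ gives $P_\sigma(\Delta_\psi-(p_1/a)\tau_2)\le 0$, hence $p_2\le p_1/a$. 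The case $p_1=0$ is immediate ($p_2=0$), and $p_1<0$ is the same computation with the two bounds interchanged, yielding $p_1/a\le p_2\le p_1/b$; so in all cases $\tfrac1b|P_{\Phi_1}(\psi_1)|\le|P_{\Phi_2}(\psi_2)|\le\tfrac1a|P_{\Phi_1}(\psi_1)|$ and the two pressures share a sign. For $p_1\ge 0$ this is exactly the asserted estimate with $C=\max\{b,1/a\}\ge 1$, and it makes precise the remark that the closer $\tau_2/\tau_1$ is to $1$, the less the pressure changes.

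For the equilibrium states I would invoke Theorem~\ref{thm_eqstate} for $g=\psi_i$ on $(Y_i,\Phi_i)$. Since, as above, $P_\sigma(\Delta_{\psi_i}-P_{\Phi_i}(\psi_i)\tau_i)=0$ in the compact case, condition~(1) there applies as soon as the H\"older potential $\Delta_{\psi_i}-P_{\Phi_i}(\psi_i)\tau_i$ has an equilibrium measure $\nu_i$ with $\int\tau_i\,\mathrm d\nu_i<\infty$; but a H\"older potential on a compact transitive subshift of finite type has a unique equilibrium measure by the classical Ruelle--Bowen--Sinai theory, and $\int\tau_i\,\mathrm d\nu_i<\infty$ automatically since $\tau_i$ is bounded. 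Hence each $\psi_i$ has an equilibrium state (the normalisation of $\nu_i\times m$), and uniqueness is the last assertion of Theorem~\ref{thm_eqstate}; alternatively this is contained in the Bowen--Ruelle theory \cite{br} of suspensions over finite-alphabet subshifts.

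The argument has no deep obstacle, but one point genuinely needs care: the pointwise comparison $a\tau_1\le\tau_2\le b\tau_1$ interacts with the sign of $t$ in $\Delta_\psi-t\tau_i$, so the clean constant $C=\max\{b,1/a\}$ and the direction of the displayed double inequality really correspond to the case of nonnegative pressure (as in the motivating entropy case $\psi\equiv 0$, cf.\ Corollary~\ref{entropia}); for pressures of negative sign the same estimates hold but are most naturally phrased in terms of $|P_{\Phi_i}(\psi_i)|$ as above. A secondary routine check is that the compact transitive (rather than mixing) hypothesis still permits the implicit pressure formula of Theorem~\ref{pre}, which in this compact category is classical.
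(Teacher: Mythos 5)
Your proposal is correct and follows essentially the same route as the paper: compactness gives a two-sided bound on $\tau_2/\tau_1$, monotonicity of the pressure in the potential transfers this to the implicit equation $P_\sigma(\Delta_\psi-t\tau_i)=0$ from Theorem~\ref{pre}, and uniqueness of equilibrium states is the classical Bowen--Ruelle theory. Your explicit treatment of the sign of the pressure (noting that the displayed double inequality as written presupposes $P_{\Phi_1}(\psi_1)\ge 0$, with the bounds reversing otherwise) is in fact more careful than the paper's brief remark that ``an analogous argument holds if $t<0$.''
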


\begin{proof}
Since both $\tau_1$ and $\tau_2$ are positive continuous functions defined over a compact space, there exists $C>0$ such that for every $x \in \Sigma$ we have
\begin{equation*} \label{sim}
\frac{1}{C} < \frac{\tau_2(x)}{\tau_1(x)} < C.
\end{equation*}
Therefore  for every positive $t \in \R$ we have that $-\frac{t}{C}\tau_1 \geq -t \tau_2 \geq -tC \tau_1.$
For any $t \geq P_{\Phi_2}(\psi_2)$ we have that $0 \geq P_{\sigma}(\Delta_{\psi_2} - t \tau_2) \geq P_{\sigma}( \Delta_{\psi_1}-tC \tau_1). $
Therefore $ P_{\Phi_1}(\psi_1) \leq C P_{\Phi_2}(\psi_2).  $
On the other hand, if $t < P_{\sigma}(\psi_2)$ then
\[0 \leq  P_{\sigma}(\Delta_{\psi_2} - t \tau_2) \leq  P_{\sigma}\left( \Delta_{\psi_1}-\frac{t}{C}\tau_1 \right),  \]
thus  for every $t < P_{\Phi_2}(\psi_2)$ we have that  $t/C \leq P_{\Phi_1}(\psi_1)$. Therefore
\[ \frac{P_{\Phi_2}(\psi_2)}{C} \leq P_{\Phi_1}(\psi_1). \]
An analogous argument holds if $t <0$. The fact that both systems have unique equilibrium measures for H\"older potentials was proven by Bowen and Ruelle in \cite{br}.
\end{proof}

\subsection{The non compact setting: non-preservation of thermodynamic quantities} \label{ss:nc}

 This sub-section is devoted to show that if the base of the suspension flow is not assumed to be compact then, despite  the regularity assumed in the roof function or in the potentials considered, no natural thermodynamic quantity is preserved by time changes. We will exhibit explicit  examples showing this. In order to construct such examples we will mostly make use of the techniques developed in sub-section \ref{sb:ex}. Indeed, we have the following results.


\begin{lemma} \label{lem_c1}
Let $(\Sigma, \sigma)$ be the full-shift on a countable alphabet. Let  $a,b>0$ and  $\varphi_{(a,b)}$ the family of locally constant potentials defined, for every $n \in \N$,  by $\varphi_{(a,b)} | C_n := -b\log (n+a)$. Denote  by $(Y, \Phi_{(a,b)})$ the suspension semi flow with base $\Sigma$ and roof function $\tau=-\varphi_{(a,b)}.$  This family of flows has the following properties.
\begin{enumerate}
\item \label{lemma1_one} $s_{\infty}(\Phi_{(a,b)})=\frac{1}{b}$ independently of $a>0.$
\item \label{lemma1_two} $P_{\sigma}(\varphi_{(a,b)})= P_{\sigma}(b\varphi_{(a,1)}),$ in particular $h( \Phi_{(a,b)} )=\frac{h( \Phi_{(a,1)})}{b}.$ 
\item \label{lemma1_three} For every $b,c>0,$ there exists $a> 0$ such that $h( \Phi_{(a,b)} )>c.$
\item \label{lemma1_four} For every $c>1,$ there exists a unique $a>0$ such that $h( \Phi_{(a,1)} )=c.$
\item \label{lemma1_five} For every $0<c<1<d,$ there exists a unique pair $(a,b)$ such that $s_{\infty}(\Phi_{(a,b)})=c$ and $h( \Phi_{(a,b)} )=d.$
\end{enumerate}
\end{lemma}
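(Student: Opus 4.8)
I would work entirely with the Dirichlet-series formula \eqref{Dirichlet}, which here reads $e^{P_\sigma(t\varphi_{(a,b)})}=\sum_{n=1}^\infty (n+a)^{-bt}$, so that $P_\sigma(t\varphi_{(a,b)})<\infty$ iff $bt>1$; this immediately gives $s_\infty(\Phi_{(a,b)})=1/b$, proving \eqref{lemma1_one}, and the substitution $bt\mapsto t'$ shows $\sum (n+a)^{-bt}=e^{P_\sigma(t'\varphi_{(a,1)})}$ with $t'=bt$, giving $P_\sigma(\varphi_{(a,b)})=P_\sigma(b\varphi_{(a,1)})$ and hence \eqref{lemma1_two} after recalling from Corollary \ref{entropia} that $h(\Phi_{(a,b)})=\inf\{t: P_\sigma(-t\tau)\le 0\}=\inf\{t: P_\sigma(t\varphi_{(a,b)})\le 0\}$; the rescaling of the variable divides this infimum by $b$.

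For \eqref{lemma1_three} and \eqref{lemma1_four}, the point is to understand $h(\Phi_{(a,1)})=\inf\{t: \sum_{n=1}^\infty (n+a)^{-t}\le 1\}$ as a function of $a$. Writing $G(a,t):=\sum_{n=1}^\infty(n+a)^{-t}=\zeta(t,a+1)$ (the Hurwitz zeta function of Theorem \ref{Hurwitz_zeta}), for fixed $t>1$ the map $a\mapsto G(a,t)$ is continuous and strictly decreasing, with $G(a,t)\to 0$ as $a\to\infty$ and $G(a,t)\to\zeta(t)-1$... actually $G(a,t)\to+\infty$ as $a\to 0^+$ only fails — let me instead note $\lim_{a\to 0^+}G(a,t)=\zeta(t)$ which blows up as $t\to 1^+$. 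The clean route: fix the target. For \eqref{lemma1_three}, given $b,c>0$, choose $t_0\in(1/b,\,?)$ with $t_0<c b$... the cleanest is: the entropy $h(\Phi_{(a,1)})$ is the unique $t>1$ with $G(a,t)=1$ (uniqueness because $t\mapsto G(a,t)$ is strictly decreasing on $(1,\infty)$ from $+\infty$ to $1$), and one checks $G(a,t)\to+\infty$ as $a\to 0^+$ for each fixed $t>1$, forcing $h(\Phi_{(a,1)})\to\infty$, while $G(a,t)\to$ a small value... — in any case, continuity and monotonicity of $(a,t)\mapsto G(a,t)$ together with the two limits $\lim_{a\to0^+}h(\Phi_{(a,1)})=+\infty$ and $\lim_{a\to\infty}h(\Phi_{(a,1)})=1$ give, via the intermediate value theorem, exactly \eqref{lemma1_four}: for every $c>1$ there is a unique $a>0$ with $h(\Phi_{(a,1)})=c$. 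Then \eqref{lemma1_three} follows from \eqref{lemma1_two}: given $b,c>0$ pick $c'>\max\{1,bc\}$ and the corresponding $a$, so $h(\Phi_{(a,b)})=h(\Phi_{(a,1)})/b=c'/b>c$.

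Finally \eqref{lemma1_five} is a bookkeeping consequence: given $0<c<1<d$, first $s_\infty(\Phi_{(a,b)})=1/b=c$ forces $b=1/c$, uniquely. Then we need $a$ with $h(\Phi_{(a,1/c)})=d$, i.e.\ by \eqref{lemma1_two} with $h(\Phi_{(a,1)})=d/c$; since $d>1>c>0$ we have $d/c>1$, so \eqref{lemma1_four} provides a unique such $a$. Hence $(a,b)$ is uniquely determined.

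**Main obstacle.** The only non-formal input is establishing the monotonicity of $t\mapsto G(a,t)$ on $(1,\infty)$ (easy: termwise monotone) together with the two boundary limits of the entropy, $\lim_{a\to 0^+}h(\Phi_{(a,1)})=\infty$ and $\lim_{a\to\infty}h(\Phi_{(a,1)})=1$. The second is clear since $G(a,t)\le\sum_{n\ge1}(n+a)^{-t}\to 0$ as $a\to\infty$ for any fixed $t>1$, pushing the threshold down to $s_\infty=1$; the first needs that $G(a,t)$ stays $>1$ for $a$ small and $t$ bounded away from $1$, e.g.\ $G(a,t)\ge\sum_{n=1}^{N}(n+a)^{-t}\to\sum_{n=1}^N n^{-t}$, which exceeds $1$ for suitable fixed $N$ and all $t$ in a compact subinterval of $(1,\infty)$ — so the threshold cannot stay bounded. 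I expect these limit estimates, though elementary, to be the part requiring the most care to state cleanly; everything else is algebraic manipulation of \eqref{Dirichlet} and Corollary \ref{entropia}.
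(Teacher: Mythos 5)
Your argument is correct and follows essentially the same route as the paper: the Hurwitz zeta identity $e^{P_\sigma(t\varphi_{(a,b)})}=\zeta(bt,a+1)$, strict monotonicity and continuity of $\zeta(t,a+1)$ in both variables together with the boundary behaviour as $a\to 0^+$ and $a\to\infty$, and the rescaling identity of part (b) to deduce (c) and (e); the only differences are cosmetic (you obtain strict decrease in $a$ termwise where the paper cites Alzer, and your parenthetical ``decreasing from $+\infty$ to $1$'' should read ``to $0$'', which is harmless since the curve still crosses $1$ exactly once).
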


\begin{proof}
Part (\ref{lemma1_one}) follows from the observation that $e^{P_{\sigma}(\varphi_{(a,b)})}=\zeta(b,a+1)$ and an application of  Theorem \ref{Hurwitz_zeta}. Part (\ref{lemma1_two}) follows from definition, indeed:
$$
\begin{aligned}
P_{\sigma}(b\varphi_{(a,1)})&= \log \sum_{n=1}^{\infty} \exp\left(b\varphi_{(a,1)}| C_n\right)= \log \sum_{n=1}^{\infty} \exp\left(-b\log (n+a) \right)\\
&= \log \sum_{n=1}^{\infty} \exp\left(\varphi_{(a,b)}| C_n \right)=P_{\sigma}(\varphi_{(a,b)}).
\end{aligned}
$$
For part (\ref{lemma1_three}), it is sufficient to prove that for every $c>1$ there exists $a>0$ such that $h( \Phi_{(a,1)} )>c,$ which is equivalent to $$\inf\{t\in\mathbb{R}: P_{\sigma}( t \varphi_{(a,1)} )\leq 0\}>c.$$ 
We will prove that there exists $a>0$ such that for $e^{P_{\sigma}(t \varphi_{(a,1)})}=e^{P_{\sigma}(\varphi_{(a,t)})}=\zeta(t,a+1)$ we have that 
$$
 \zeta(t,a+1) >1 \mbox{ for all }t<c.
$$
For any fixed $a>0,$ we have that $\zeta(t,a+1)$ is decreasing in $t>1,$ therefore, it is enough to prove that there exists $a>0$ such that
\begin{equation*}
 \zeta(c,a+1) >1.
\end{equation*}
We notice that for every $c>1,$ $\zeta(c,1)>1,$ this together with the fact that the map $a\mapsto \zeta(c,a+1)$ is continuous, proves the result. 

For  part (\ref{lemma1_four}), let $a^*>0$ such that $h( \Phi_{(a^*,1 )})>c.$ By \cite[ Theorem 2.1]{alzer2015}, the map $a\mapsto \zeta(c,1+a)$ is strictly decreasing in $(0,\infty),$ and its clear that by taking $a$ big enough $ \zeta(c,1+a)<1,$ therefore, there exists a unique $a^{**}>a^*$ such that $\zeta(c,1+a^{**})=1.$ This implies that  $h( \Phi_{(a^{**},1)})=c,$ which completes the proof.

For part (\ref{lemma1_five}), let $a>0$ such that $h( \Phi_{(a,1)})=\frac{d}{c}.$ Then, $h(\Phi_{(a,\frac{1}{c})})=d$ and $s_{\infty}(\Phi_{(a,\frac{1}{c})})=c.$
\end{proof}

We state the following elemental lemma from calculus in order to justify the definition of certain potentials.

\begin{lemma}\label{lem:calc} Let $A(n,\theta):=\frac{1}{(\theta-1)\log^{\theta-1}(n)}$ and $B(n,\gamma):=\sup\{\theta>1: \gamma A(n,\theta) >1\}$ for $n\in\N,\theta>1$ and $\gamma>0.$ 
\begin{enumerate}
\item For every $r=2,3,\ldots$
$$
A(r,\theta)<\sum_{k=r}^{\infty} \frac{1}{k\log^{\theta} k}<A(r-1,\theta).
$$
\item For every $\epsilon>0,$ 
$$4=\min\left\{r\in \{2,3,\ldots\}: A(r-1,\theta)-A(r,\theta) \mbox{ is uniformly bounded for }\theta\in(\epsilon,\infty) \right\}.$$
\item For every $r\geq 4,$ $\gamma>0,$ $B(r,\gamma)<B(r-1,\gamma),$ the map $(1,\infty)\ni\theta\mapsto \sum_{k=r}^{\infty} \frac{\gamma}{k\log^{\theta} k}$ is continuous strictly decreasing with 
$$
\sum_{k=r}^{\infty} \frac{\gamma}{k\log^{\theta} k}=
\begin{cases}
<1 \mbox{ if } \theta >B(r-1,\gamma);\\
>1 \mbox{ if } \theta <B(r,\gamma).
\end{cases}
$$
\end{enumerate}

\end{lemma}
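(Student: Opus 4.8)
The plan is to treat the three parts essentially independently, using only elementary calculus (integral comparison, monotone convergence, and the intermediate value theorem), since each is a quantitative statement about the tail sums $\sum_{k\geq r} (k\log^\theta k)^{-1}$.

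For part (1), I would compare the sum with the integral $\int (x\log^\theta x)^{-1}\,dx = -\frac{1}{(\theta-1)\log^{\theta-1} x}$. Since the integrand $x\mapsto (x\log^\theta x)^{-1}$ is positive and strictly decreasing on $[2,\infty)$, the standard integral test gives
\begin{equation*}
\int_r^\infty \frac{dx}{x\log^\theta x} \;<\; \sum_{k=r}^\infty \frac{1}{k\log^\theta k} \;<\; \int_{r-1}^\infty \frac{dx}{x\log^\theta x},
\end{equation*}
and evaluating both integrals yields exactly $A(r,\theta)$ on the left and $A(r-1,\theta)$ on the right. This is the easy part; the only thing to check is monotonicity of the integrand, which holds for $x\geq 2$, $\theta>1$.

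For part (2), I would study the difference $D(r,\theta):=A(r-1,\theta)-A(r,\theta)=\frac{1}{\theta-1}\left(\log^{-(\theta-1)}(r-1)-\log^{-(\theta-1)}(r)\right)$ as a function of $\theta\in(\epsilon,\infty)$, for each fixed integer $r$. The behaviour as $\theta\to 1^+$ is the crux: writing $u=\log(r-1)$, $v=\log r$, the expression is $\frac{1}{\theta-1}(u^{-(\theta-1)}-v^{-(\theta-1)})$, whose limit as $\theta\to 1^+$ is $\log(v/u)=\log\!\big(\log r/\log(r-1)\big)$, a finite number — so divergence at $\theta\to 1^+$ is \emph{not} the obstruction. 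As $\theta\to\infty$ the difference tends to $0$. So $D(r,\cdot)$ is continuous on the closure and bounded near both ends; the failure of uniform boundedness for small $r$ must come from an interior blow-up, and indeed for $r=2$ one has $\log(r-1)=0$, making $A(r-1,\theta)=+\infty$ identically, so $r=2$ is excluded outright. For $r=3$, $\log(r-1)=\log 2\in(0,1)$, so $\log^{-(\theta-1)}(2)=2^{(\theta-1)\log(1/\log 2)}\cdot(\dots)$ — more carefully, $\log^{-(\theta-1)}(\log 2)$ grows like $(1/\log 2)^{\theta-1}$, which is $>1$ and exponentially large in $\theta$, while dividing by $\theta-1$ does not tame it; hence $D(3,\theta)\to\infty$. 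For $r\geq 4$ one has $\log(r-1)\geq \log 3>1$, so both $\log^{-(\theta-1)}(r-1)$ and $\log^{-(\theta-1)}(r)$ lie in $(0,1]$ and are bounded, and dividing by $\theta-1$ keeps $D(r,\theta)$ bounded on $(\epsilon,\infty)$; one also checks the $\theta\to 1^+$ limit is finite as above. This pins the minimal $r$ at $4$. \textbf{This is the part I expect to be the main obstacle}, because it requires carefully distinguishing which $r$ give a bounded difference, and the relevant threshold $\log(r-1)>1 \iff r>1+e \iff r\geq 4$ has to be identified and the boundedness/unboundedness verified on both sides of it.

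For part (3), fix $r\geq 4$ and $\gamma>0$. Continuity and strict monotonicity of $\theta\mapsto g(\theta):=\sum_{k=r}^\infty \frac{\gamma}{k\log^\theta k}$ on $(1,\infty)$ follow from dominated/monotone convergence: each term $\gamma(k\log^\theta k)^{-1}$ is continuous and strictly decreasing in $\theta$ (as $\log k\geq \log 4>1$), and the series converges locally uniformly for $\theta>1$. Then I would use part (1): $g(\theta)>\gamma A(r,\theta)$, so if $\gamma A(r,\theta)\geq 1$, i.e. $\theta\leq B(r,\gamma)$ (with strict inequality giving $>1$ by the definition of $B$ as a supremum together with monotonicity of $A(r,\cdot)$), then $g(\theta)>1$; and $g(\theta)<\gamma A(r-1,\theta)$, so if $\theta>B(r-1,\gamma)$ then $\gamma A(r-1,\theta)<1$, whence $g(\theta)<1$. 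The inequality $B(r,\gamma)<B(r-1,\gamma)$ is immediate from $A(r,\theta)<A(r-1,\theta)$ (part (1), or directly from $\log(r-1)<\log r$) plus the monotonicity of $A$ in $\theta$: $\gamma A(r,\theta)>1$ forces $\gamma A(r-1,\theta)>1$, so the defining supremum for $B(r-1,\gamma)$ is over a larger set. Strictness of the gap can be extracted from the strict monotonicity of $A(\cdot,\theta)$ at any point where both suprema are finite, or noted to be irrelevant if one only needs $\leq$; I would include a one-line argument that $A(r,\cdot)$ is continuous and strictly decreasing so the sup is attained in the limiting sense and the two thresholds genuinely differ. The rest is bookkeeping.
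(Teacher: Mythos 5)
The paper offers no proof of this lemma at all --- it is stated as an ``elemental lemma from calculus'' and left to the reader --- so there is nothing to compare against except the intended elementary argument, which is exactly what you supply. Your proof is correct: part (1) is the integral test with $\int_r^{\infty}\frac{dx}{x\log^{\theta}x}=A(r,\theta)$; in part (2) you correctly locate the threshold $\log(r-1)>1\iff r\ge 4$, dispose of $r=2$ via $A(1,\theta)=\infty$ and of $r=3$ via the exponential growth of $(1/\log 2)^{\theta-1}$, and check that the apparent singularity of $\frac{1}{\theta-1}\bigl(u^{-(\theta-1)}-v^{-(\theta-1)}\bigr)$ at $\theta=1$ is removable; and part (3) follows from part (1) together with the strict monotonicity of $\theta\mapsto A(n,\theta)$ for $\log n>1$, which makes $B(n,\gamma)$ the unique root of $\gamma A(n,\theta)=1$ and yields both the strict inequality $B(r,\gamma)<B(r-1,\gamma)$ and the two displayed cases. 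The only blemishes are cosmetic (e.g.\ the garbled expression $\log^{-(\theta-1)}(\log 2)$ where you mean $(\log 2)^{-(\theta-1)}$), not mathematical.
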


\begin{tikzpicture}
\begin{axis}[
    axis lines = left,
    xlabel = $\theta$,
    ylabel = {$$},
]
\addplot [
    domain=1.5:8, 
    samples=100, 
    color=red,
]
{5/((x - 1)*ln(4)^(x - 1))};
\addlegendentry{$5A(4,\theta)$}

\addplot [
    domain=1.5:8, 
    samples=100, 
    color=blue,
    ]
    {5/((x - 1)*ln(3)^(x - 1))};
\addlegendentry{$5A(3,\theta)$}

\addplot [
    domain=1.5:8, 
    samples=100, 
    color=green,
    ]
    {1};

\end{axis}

 \draw (2.37,0) coordinate (a_1) -- (2.37,0.7) coordinate (a_2);
 \draw (4,0) coordinate (b_1) -- (4,0.7) coordinate (b_2);

\draw  (2.5,0) node (yaxis) [below] {{\small $B(4,5)$}};
\draw  (4,0) node (yaxis) [below] {{\small $B(3,5)$}};       

\end{tikzpicture}

\begin{lemma}\label{Lem_disc} 
Let $(\Sigma, \sigma)$ be the full-shift on a countable alphabet. Let $\theta\in \mathbb{R}^+, k\in\N, 0<\gamma\leq 2+k$ and  $\varphi_{\theta,k,\gamma}$ the family of locally constant potentials defined, for every $n \in \N$,  by 
$\varphi_{\theta,k,\gamma} | C_n := -\log \left( \frac{(n+k+1)\log^{\theta} (n+k+1)}{\gamma} \right).$ If $\varphi_{\theta,k,\gamma}<0,$ denote by $(Y, \Phi_{\theta,k,\gamma})$ the suspension semi flow with base $\Sigma$ and roof function $\tau=-\varphi_{\theta,k,\gamma}.$  This family of flows has the following properties.
\begin{enumerate}
\item \label{lem11_1} For every $\theta\in\mathbb{R}^+$ and $k\in\N,$  $\varphi_{\theta,k,\gamma} <0$ and  $s_{\infty}(\Phi_{\theta,k,\gamma})=1.$
\item \label{lem11_2} The pressure satisfies
$$
P_{\sigma}(\varphi_{\theta,k,\gamma})=
\begin{cases}
<\infty & \mbox{ if }\theta>1;\\
=\infty & \mbox{ if }\theta\leq 1.
\end{cases}
$$
\item \label{lem11_3} If $\theta>1$ and $k\geq 2,$  $P\left(\varphi_{\theta_{k,\gamma},k,\gamma}\right)=0$ for a unique $\theta_{k,\gamma}\in (B(k+2,\gamma),B(k+1,\gamma)).$ Moreover, 
$$
P\left(\varphi_{\theta,k,\gamma}\right)=
\begin{cases}
\mbox{ positive } & \mbox{ if } \theta <\theta_{k,\gamma};\\
\mbox{ negative } & \mbox{ if } \theta >\theta_{k,\gamma},
\end{cases}
$$
and
$$
h_{top}\left(\Phi_{\theta,k,\gamma}\right)=
\begin{cases}
>1 & \mbox{ if } \theta <\theta_{k,\gamma};\\
=1 & \mbox{ if } \theta \geq \theta_{k,\gamma}.
\end{cases}
$$
\item \label{lem11_4} If $\theta>1,$ the right-derivative of the pressure satisfies
$$
\frac{d}{ds}P_{\sigma}(s\varphi_{\theta,k,\gamma})|_{s=1}=
\begin{cases}
>-\infty & \mbox{ if }\theta>2;\\
=-\infty & \mbox{ if } 1<\theta\leq 2.
\end{cases}
$$
\end{enumerate}
\end{lemma}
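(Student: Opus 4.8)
The plan is to reduce each assertion to an analysis of the Dirichlet series \eqref{Dirichlet}. Put $\lambda_n:=\gamma^{-1}(n+k+1)\log^{\theta}(n+k+1)$, so that $\varphi_{\theta,k,\gamma}|C_n=-\log\lambda_n$; then \eqref{Dirichlet} gives, for every $t>0$,
\[
e^{P_{\sigma}(t\varphi_{\theta,k,\gamma})}=\sum_{n=1}^{\infty}\lambda_n^{-t}=\gamma^{t}\sum_{m=k+2}^{\infty}\bigl(m\log^{\theta}m\bigr)^{-t}.
\]
To begin I would verify the standing requirement $\varphi_{\theta,k,\gamma}<0$ by the elementary observation that $m\mapsto m\log^{\theta}m$ is increasing for $m\ge2$ and $\log(k+2)>1$, so for $n\ge1$ one has $\lambda_n\ge\gamma^{-1}(k+2)\log^{\theta}(k+2)>\gamma^{-1}(k+2)\ge1$, hence $\lambda_n>1$. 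Assertion (a) then follows since $\sum_m(m\log^{\theta}m)^{-t}$ converges for $t>1$ and diverges for $t<1$ independently of $\theta$, so the abscissa of convergence of the series on the right is exactly $1$; by definition this is $s_{\infty}(\Phi_{\theta,k,\gamma})$. Assertion (b) is the $t=1$ case of the displayed identity: $P_{\sigma}(\varphi_{\theta,k,\gamma})=\log\!\bigl(\gamma\sum_{m\ge k+2}(m\log^{\theta}m)^{-1}\bigr)$, which is finite iff $\theta>1$ and is $+\infty$ otherwise.

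For (c) set $g(\theta):=e^{P_{\sigma}(\varphi_{\theta,k,\gamma})}=\gamma\sum_{m=k+2}^{\infty}(m\log^{\theta}m)^{-1}$ for $\theta>1$. Since $k\ge2$, Lemma \ref{lem:calc} applies with $r=k+2\ge4$: part (3) says $g$ is continuous and strictly decreasing with $g(\theta)>1$ for $\theta<B(k+2,\gamma)$ and $g(\theta)<1$ for $\theta>B(k+1,\gamma)$, while part (1) gives the strict sandwich $\gamma A(k+2,\theta)<g(\theta)<\gamma A(k+1,\theta)$. As $\theta\mapsto\gamma A(r,\theta)$ decreases continuously from $+\infty$ to $0$ and takes the value $1$ exactly at $\theta=B(r,\gamma)$, evaluating the sandwich at the two endpoints yields $g(B(k+2,\gamma))>1>g(B(k+1,\gamma))$; the intermediate value theorem plus strict monotonicity produces the unique $\theta_{k,\gamma}\in(B(k+2,\gamma),B(k+1,\gamma))$ with $g(\theta_{k,\gamma})=1$, i.e. $P_{\sigma}(\varphi_{\theta_{k,\gamma},k,\gamma})=0$, and the sign of $P_{\sigma}(\varphi_{\theta,k,\gamma})=\log g(\theta)$ matches the sign of $g(\theta)-1$ (the case $\theta\le1$ giving $+\infty$ is consistent, since $\theta_{k,\gamma}>B(k+2,\gamma)>1$). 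For the entropy, Corollary \ref{entropia} gives $h_{top}(\Phi_{\theta,k,\gamma})=\inf\{t:P_{\sigma}(t\varphi_{\theta,k,\gamma})\le0\}$; the function $t\mapsto P_{\sigma}(t\varphi_{\theta,k,\gamma})$ is $+\infty$ on $(0,1)$, continuous and strictly decreasing on $[1,\infty)$, and tends to $-\infty$ (for $t\ge2$, $\sum_n\lambda_n^{-t}\le\lambda_1^{-(t-2)}\sum_n\lambda_n^{-2}\to0$). Hence if $\theta<\theta_{k,\gamma}$, so $P_{\sigma}(\varphi_{\theta,k,\gamma})>0$, the unique zero lies at some $t^{*}>1$ and $h_{top}=t^{*}>1$; if $\theta\ge\theta_{k,\gamma}$, so $P_{\sigma}(\varphi_{\theta,k,\gamma})\le0$, then $t=1$ already lies in the set while every $t<1$ does not, so $h_{top}=1$.

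For (d), with $\theta>1$ let $F(s):=\sum_n\lambda_n^{-s}$, finite for all $s\ge1$, so $P_{\sigma}(s\varphi_{\theta,k,\gamma})=\log F(s)$ on $[1,\infty)$. For each $n$ the quantity $\lambda_n^{-1}\dfrac{\lambda_n^{-(s-1)}-1}{s-1}$ increases with $s$, hence decreases to its limit $-\lambda_n^{-1}\log\lambda_n$ as $s\downarrow1$; since $F(s)-F(1)=\sum_n\lambda_n^{-1}\bigl(\lambda_n^{-(s-1)}-1\bigr)$, monotone convergence gives $\dfrac{F(s)-F(1)}{s-1}\to-\sum_n\dfrac{\log\lambda_n}{\lambda_n}$, and as $F$ is continuous at $1$ with $F(1)>0$, the right-derivative of the pressure at $s=1$ equals $-F(1)^{-1}\sum_n\log\lambda_n/\lambda_n$. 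Because $\log\lambda_n=\log(n+k+1)+\theta\log\log(n+k+1)-\log\gamma$ is of exact order $\log n$ while $\lambda_n$ is of exact order $n\log^{\theta}n$, the ratio $\log\lambda_n/\lambda_n$ is of exact order $(n\log^{\theta-1}n)^{-1}$, and $\sum_n(n\log^{\theta-1}n)^{-1}$ converges iff $\theta-1>1$. Therefore the right-derivative is finite ($>-\infty$) when $\theta>2$ and equals $-\infty$ when $1<\theta\le2$. (Alternatively, (d) can be read off from the singular expansion of $\sum_m(m\log^{\theta}m)^{-s}$ near $s=1$ furnished by Theorem \ref{thm:log} with $\eta=0$, whose leading term, $\Gamma(\theta+1)(s-1)^{\theta-1}$ for $\theta\notin\N$ and $-(s-1)\log\frac{1}{s-1}$ for $\theta=2$, has finite one-sided derivative at $1$ precisely for $\theta>2$.)

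The step I expect to be most delicate is (d): since $s_{\infty}=1$ only the right-derivative is meaningful, and one must carefully justify the interchange of limit and summation giving the clean formula $-F(1)^{-1}\sum_n\log\lambda_n/\lambda_n$, together with the borderline case $\theta=2$. Part (c) is comparatively routine, the only subtlety being to feed the strict inequalities of Lemma \ref{lem:calc}(1) into the monotonicity of Lemma \ref{lem:calc}(3) so as to locate $\theta_{k,\gamma}$ strictly inside the open interval; parts (a) and (b) are direct consequences of \eqref{Dirichlet}.
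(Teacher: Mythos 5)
Your proof is correct and shares the paper's skeleton: everything is reduced to the Dirichlet--series identity $e^{P_{\sigma}(s\varphi_{\theta,k,\gamma})}=\gamma^{s}\sum_{m\ge k+2}(m\log^{\theta}m)^{-s}$ (you carry the factor $\gamma^{s}$ correctly, where the paper's displayed identity writes $\gamma$), and part (3) is extracted from Lemma \ref{lem:calc} in both treatments, though you supply the sandwich-plus-intermediate-value details that the paper leaves implicit. Where you genuinely diverge is in parts (1), (2) and (4): the paper reads $s_{\infty}=1$ and the behaviour of the right-derivative off the Grabner--Thuswaldner continuation (Theorem \ref{thm:log}), asserting without computation that $\frac{d}{ds}\mathcal{L}_{0,\theta}(s)|_{s=1}$ is finite exactly for $\theta>2$; you instead determine the abscissa of convergence by elementary comparison and compute the right-derivative directly, showing by monotone convergence that it equals $-F(1)^{-1}\sum_{n}\lambda_n^{-1}\log\lambda_n$ with $\log\lambda_n/\lambda_n\asymp(n\log^{\theta-1}n)^{-1}$, so that the threshold $\theta>2$ drops out of the Bertrand series. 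Your route is more self-contained and actually substantiates the paper's terse claim in (4); the paper's route, via the singular expansion at $s=1$, yields more information (the precise nature of the branch-point singularity), which is what the paper uses elsewhere to classify these potentials as being of discontinuous type. One small blemish: in your parenthetical alternative for (d), the leading coefficient furnished by Theorem \ref{thm:log} with $\eta=0$ is $\Gamma(1-\theta)$, not $\Gamma(\theta+1)$; this does not affect your main argument, which stands on its own.
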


\begin{proof}

To prove (\ref{lem11_1}), we notice that for every $\theta>0, k\in\N, 0<\gamma\leq 2+k$ $$\varphi_{\theta,k,\gamma}<-\log \left( \frac{(2+k) \log^{\theta} (2+k)}{\gamma} \right)<-\log ( \log^{\theta} (2+k))<0,$$  and 
\begin{equation}\label{lem11_ident}
e^{ P_{\sigma}(s\varphi_{\theta,k,\gamma}) }=\gamma \mathcal{L}_{0,\theta}(s)-\sum_{n=2}^{k+1}\frac{\gamma}{(k(\log k)^{\theta})^s}=\sum_{n=k+2}^{\infty}\frac{\gamma}{(k(\log k)^{\theta})^s},
\end{equation}
in particular, an immediate consequence of Theorem \ref{thm:log} is $s_{\infty}(\Phi_{\theta,k,\gamma})=1.$ The proof of (\ref{lem11_2}) follows from (\ref{lem11_ident}) and the equivalences $P_{\sigma}(s\varphi_{\theta,k,\gamma})=\infty$ iff $\mathcal{L}_{0,\theta}(s)=\infty$ iff $s<1$ or $s=1$ and $\theta\leq 1.$ The proof of (\ref{lem11_3}) follows from Lemma \ref{lem:calc}. Finally, the proof of (\ref{lem11_4}) follows from right-differentiating in (\ref{lem11_ident}),
indeed, $$
\begin{aligned} 
\frac{d}{ds} P_{\sigma}(s\varphi_{\theta,k,\gamma})|_{s=1}&=\gamma \left(\frac{d}{ds} \mathcal{L}_{0,\theta}(s)|_{s=1}-\frac{d}{ds} \sum_{n=2}^{k+1}\frac{1}{(k(\log k)^{\theta})^s}|_{s=1} \right)e^{ -P_{\sigma}(s\varphi_{\theta,k,\gamma})}\\
&=  \left(\frac{d}{ds} \mathcal{L}_{0,\theta}(s)|_{s=1}+O(1) \right) O(1),
\end{aligned}
$$
and 
$$
\frac{d}{ds}\mathcal{L}_{0,\theta}(s)|_{s=1}=
\begin{cases}
>-\infty & \mbox{ if }\theta>2;\\
=-\infty & \mbox{ if } 1<\theta\leq 2,
\end{cases}
$$
which concludes the proof.

\end{proof}

Motivated by Lemmas \ref{lem_c1} and \ref{Lem_disc} we construct examples showing that no natural thermodynamic quantity is preserved by time changes for suspension semi-flows with base the full shift on $\N$.\\

Let $\Sigma$ be the full shift on $\N.$ We will define in each example particular roof functions $\tau_1,\tau_2:\Sigma \to \R^+$ constant on cylinder of length one and we will denote by $(Y_1, \Phi_1)$ and $(Y_2, \Phi_2)$ the suspension semi-flows with base $\Sigma$ and roof functions $\tau_1$ and $\tau_2,$ respectively.
\begin{example}[\emph{Time change of a flow with a unique maximal entropy measure such that the time changed flow does not have a measure of maximal entropy.}] \label{ex:1}

Consider the flow $\Phi_1$ associated to $\tau_1|C_n:= \log (n(n+1))$. Choose $k \in \N$ such that
\[  \sum_{n=1}^{\infty} \frac{1}{(n+k) \log ^2(n+k)} <1\]
and consider the flow $\Phi_2$ associated to $\tau_2|C_n:=\log ((n+k) \log(n+k))$. Note that
\begin{equation*}
P_{\sigma}(-t\tau_1)= \log \sum_{n=1}^{\infty} \left( \frac{1}{n(n+1)} \right)^t.
\end{equation*}
In particular we  have that the entropy of the flow is (see Corollary \ref{entropia})
\begin{equation*}
h(\Phi_1):= \inf \left\{ t \in \R : P_{\sigma}(-t\tau_1) \leq 0 \right\}=1.
\end{equation*}
Moreover, the normalization of the  measure $\mu_1= \nu_1 \times m$, where $\nu_1$ is the equilibrium measure for $-\tau_1$ and $m$ is the Lebesgue measure, is the measure of maximal entropy for $\Phi_1$. On the other hand,
\begin{equation*}
P_{\sigma}(-t\tau_2)= \log  \sum_{n=1}^{\infty} \left( \frac{1}{(n+k) \log ^2 (n+k)}\right)^t.
\end{equation*}
Thus,
\begin{equation*}
P_{\sigma}(-t \tau_2)=
\begin{cases}
\infty & \text{ if } t <1;\\
\text{negative} & \text{ if } t \geq1.
\end{cases}
\end{equation*}
Therefore, $h(\Phi_2):= \inf \left\{ t \in \R : P_{\sigma}(-t\tau_2) \leq 0 \right\}=1$. But  there is no measure of maximal entropy in virtue of Theorem \ref{thm_eqstate}.
 \end{example}

\begin{example}[\emph{Time change of a finite entropy flow such that the time changed flow has infinite entropy.}]
 The flow $\Phi_1$ associated to $\tau_1 |_{C_n}:= \log (n(n+1))$ has entropy equal to one (see Example \ref{ex:1}), while, the flow $\Phi_2$ associated to $\tau_2|C_n:=1$ has infinite entropy.
\end{example}

\begin{example}[\emph{Time changed flows such that there is no bijection between the spaces of invariant probability measures.}]

Consider the flow $\Phi_1$ associated to $\tau_1 |_{C_n}:= \log ((n(n+1))$. As we saw in Example \ref{ex:1} this flow has entropy equal to one and the measure of maximal entropy is the normalization of the  product measure $\mu_1= \nu_1 \times m$, where $\nu_1$ is the Gibbs (Bernoulli) measure given by
\[ \nu_1(C_n)= \frac{1}{n(n+1)}.\]
Consider now the flow $\Phi_2$ associated to $\tau_2 |C_n:=n.$ Note that
\[P_{\sigma}(-t \tau_2)= \log \sum_{n=1}^{\infty} \left( e^{-t n} \right).\]
We claim that the normalized (with respect to $Y_2$) measure $\nu_1 \times m \notin \M_{\Phi_2}$. Indeed, in order for $(\nu_1 \times m)/ (\nu_1 \times m (Y_2)) \in \M_{\Phi_2}$
a necessary condition that needs to be satisfied is that
\[\int \tau_2 \ d \nu_1< \infty.\]
However,
\begin{equation*}
\int \tau_2 \ d \nu_1= \sum_{n=1}^{\infty} \frac{n}{n(n+1)} = \infty.
\end{equation*}
In particular, the measure $\nu_1 \times m$ is an infinite $\Phi_2-$invariant measure.
\end{example}

In the next two examples we use a tool mentioned in \cite[Section 4.3]{bi1}: Given a suspension semiflow $\Phi$ on $Y$ over $\Sigma,$ we can chose any locally H\"older function $\varphi:\Sigma\to \R,$ and by a result in \cite{brw}, there exists a continuous function $\psi: Y\to \R$ such that $\Delta_{\psi}=\varphi.$ In particular, we can implicitly define a continuous function $\psi: Y\to \R$ by defining $\triangle_{\psi}|C_n:=a_n,$ where $(a_n)_n$ is a sequence of real numbers.

\begin{example}[\emph{Time changed flows such that a potential has an equilibrium measure in one flow and not in the other.}] \label{ex:eqno}
Define $\Delta_{\psi}|C_n := -2\log \log (n+5).$ Consider the flow $\Phi_1$ associated to
$\tau_1|C_n:=2\log (n+5)$. In this case
\begin{equation*}
P_{\sigma}(\Delta_{\psi} -t \tau_1)= \log \sum_{n=1}^{\infty} \frac{1}{(n+5)^{2t} \log ^2 (n+5)}.
\end{equation*}
Therefore
$P_{\Phi}(\psi)=1/2$ and $P_{\sigma}(\Delta_{\psi} -1/2 \tau_1) <0$. That is, there is no equilibrium measure for  $\psi$.
Consider now the flow $\Phi_2$ associated to $\tau_2|C_n:= \log 2^n$. Then
\begin{equation*}
P_{\sigma}(\Delta_{\psi} -t \tau_2)= \log \sum_{n=1}^{\infty} \frac{1}{2^{nt} \log ^2 (n+5)},
\end{equation*}
therefore there exists a unique $t^* >0$ such that $P_{\sigma}(\Delta_{\psi} -t^* \tau_2)=0$. Thus $P_{\Phi}(\psi_2)= t^*$. Moreover, if we denote by $\mu$ the equilibrium measure corresponding to
$\Delta_{\psi} -t^* \tau_1$ we have that
\begin{equation*}
\int \tau_2 \ d \mu  = \sum_{n=1}^{\infty} \frac{n\log 2}{2^{nt^*} \log ^2 (n+5)}< \infty.
\end{equation*}
Therefore, there exits a unique equilibrium measure for $\psi$
\end{example}

\begin{example}[\emph{Time changed flows such that a potential has an equilibrium measure in one flow and the other has \emph{infinite} equilibrium measure.}]
Define $\Delta_{\psi}|C_n :=-\log ((n+1) \log^2(n+1))$. Consider the flow $\Phi_1$ associated to 
$\tau_1|C_n:=2\log (n+1)$. In this case
\begin{equation*}
P_{\sigma}(\Delta_{\psi} -t \tau_1)= \log \sum_{n=1}^{\infty} \frac{1}{(n+1)^{1+2t} \log ^2(n+1)},
\end{equation*}
We thus have
\begin{equation*}
P_{\sigma}(\Delta_{\psi} -P_{\Phi_1}(\psi) \tau_1)=0,
\end{equation*}
and $P_{\Phi_1}(\psi) <1$.  Moreover, if we denote by $\mu$ the equilibrium measure corresponding to
$\Delta_{\psi} -P_{\Phi_1}(\psi) \tau_1$ we have that
\begin{equation*}
 \int \tau_1 d \mu = \sum_{n=1}^{\infty} \frac{\log^2(n+1)}{n^{2P_{\Phi_1}(\psi)+1} \log ^2 (n+1)}=\sum_{n=1}^{\infty} \frac{1}{n^{2P_{\Phi_1}(\psi)+1}}.
\end{equation*}
Since $2P_{\Phi_1}(\psi) -1 < 1$ we have that $\int \tau_1 d \mu= \infty$. Therefore $\psi$ has an infinite equilibrium measure.
Consider the flow $\Phi_2$ associated to $\tau_2|C_n:=\log 2^n,$ then (by Example \ref{ex:eqno}) there exists a unique equilibrium measure for $\psi.$
\end{example}

In the last example, we build a one parameter family of suspension semi-flow $\Phi_t$ for $t\geq 0,$ such that  for some $t^*>0,$ the map $t\mapsto h(\Phi_t)$ is constant in the interval $[0,t^*)$ and it is real analytic (and non constant) in the interval $[t^*,\infty).$

\begin{example}[Non analytic entropy map]
Choose $N \in \N$ such that
\begin{equation*}
\sum_{n>N} \frac{1}{2n \log^2(2n)} <1.
\end{equation*}
Consider the flow $\Phi$ associated to  $\tau |C_n:=  \log ( 2(n + N) \log ^2(n +N)).$
Thus,
\begin{equation*}
P_{\sigma}(-t \tau)= \log \sum_{n >N} \left(\frac{1}{2n \log ^2 (2n)}   \right)^t=
\begin{cases}
\text{infinity} & t<1; \\
\text{negative} & t >1.
\end{cases}
\end{equation*}
The entropy of the flow $\Phi$ is equal to $1$ (moreover, it has no measure of maximal entropy).
Consider now the one parameter family of roof functions given by $\tau_t(x)= \tau(x) +t$. Denote by
$\Phi_t$ the associated suspension semi-flow. Then
\begin{equation*}
h(\Phi_t)=
\begin{cases}
1 & \text{ if } P_{\sigma}(-\tau_t)\leq 0; \\
>1   & \text{ if } P_{\sigma}(-\tau_t)> 0.
\end{cases}
\end{equation*}
The map $(0,\infty)\ni t \mapsto P_{\sigma}(-\tau_t)\in \mathbb{R}$ is continuous, strictly increasing and it is negative at $t=0.$ Let be $t^*>0$ the unique zero of this map. For $ t\geq t^*$ the function $t \to h(\Phi_t)$ is real analytic (and non constant). Moreover, in  that range there exists a unique measure of maximal entropy (whereas for $ t \in [0, t^* )$ there is no measure of maximal entropy).
\end{example}

\subsection{Preservation of thermodynamic quantities.}
We have seen that the thermodynamic formalism of two flows, one obtained from a time change of the other, can be completely different. The following result establishes conditions in order for the thermodynamic formalism to be similar in a sense that would be made precise.

\begin{theorem} \label{thm:pre}
Let $\Sigma$ be a countable Markov shift. Let $\tau_1, \tau_2:\Sigma \to \R^+$ be two roof functions and $\Phi_1:Y_1 \to Y_1$ and  $\Phi_2:Y_2 \to Y_2$ the corresponding suspension  semi-flows. Consider the potentials  $\psi_2: Y_2 \to \R$ and   $\psi_1: Y_1 \to \R$  satisfying $\psi_1(x,r):= \psi_2 \circ \pi (x,r)$. If there exists a positive constant $C >0$ such that for every $x \in \Sigma$ we have
\begin{equation} \label{sim}
\frac{1}{C} < \frac{\tau_2(x)}{\tau_1(x)} < C,
\end{equation}
then
\begin{enumerate}
\item Either both systems have finite entropy or both have infinite entropy.
\item \[ \frac{P_{\Phi_1}(\psi_1)}{C} \leq P_{\Phi_2}(\psi_2) \leq CP_{\Phi_1}(\psi_1).\]
\end{enumerate}
\end{theorem}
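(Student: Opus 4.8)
The plan is to imitate the proof of Theorem~\ref{thm: AxiomA}, the only difference being that \eqref{sim} is now a hypothesis rather than a free consequence of compactness. Throughout write $\Delta_\psi:=\Delta_{\psi_1}=\Delta_{\psi_2}$, as in the Remark. Recall from Theorem~\ref{pre} that, for $i=1,2$,
\[
P_{\Phi_i}(\psi_i)=\inf\{t\in\R:P_{\sigma}(\Delta_\psi-t\tau_i)\le 0\}=\sup\{t\in\R:P_{\sigma}(\Delta_\psi-t\tau_i)\ge 0\}.
\]
From \eqref{sim} we have $\tfrac{1}{C}\tau_1<\tau_2<C\tau_1$, so that for every $t\ge 0$,
\[
\Delta_\psi-tC\tau_1\;\le\;\Delta_\psi-t\tau_2\;\le\;\Delta_\psi-\tfrac{t}{C}\tau_1 ,
\]
and, since the Gurevich pressure is monotone in the potential,
\[
P_{\sigma}\!\left(\Delta_\psi-tC\tau_1\right)\;\le\;P_{\sigma}\!\left(\Delta_\psi-t\tau_2\right)\;\le\;P_{\sigma}\!\left(\Delta_\psi-\tfrac{t}{C}\tau_1\right).
\]

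I would prove part~(2) first, treating the case $P_{\Phi_2}(\psi_2)\ge 0$ (the case $P_{\Phi_2}(\psi_2)<0$ being handled by running the same estimates with $t<0$, for which \eqref{sim} gives the chain $\Delta_\psi-\tfrac{t}{C}\tau_1\le\Delta_\psi-t\tau_2\le\Delta_\psi-tC\tau_1$ instead, exactly as in Theorem~\ref{thm: AxiomA}). If $t>P_{\Phi_2}(\psi_2)$ then $P_{\sigma}(\Delta_\psi-t\tau_2)\le 0$, hence $P_{\sigma}(\Delta_\psi-tC\tau_1)\le 0$, so $tC\ge P_{\Phi_1}(\psi_1)$; letting $t$ decrease to $P_{\Phi_2}(\psi_2)$ gives $P_{\Phi_1}(\psi_1)\le C\,P_{\Phi_2}(\psi_2)$, i.e.\ $\tfrac{1}{C}P_{\Phi_1}(\psi_1)\le P_{\Phi_2}(\psi_2)$. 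Dually, if $0\le t<P_{\Phi_2}(\psi_2)$ then $P_{\sigma}(\Delta_\psi-t\tau_2)\ge 0$, hence $P_{\sigma}(\Delta_\psi-\tfrac{t}{C}\tau_1)\ge 0$, so $\tfrac{t}{C}\le P_{\Phi_1}(\psi_1)$; letting $t$ increase to $P_{\Phi_2}(\psi_2)$ gives $P_{\Phi_2}(\psi_2)\le C\,P_{\Phi_1}(\psi_1)$. Together these are exactly the two inequalities of~(2).

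Part~(1) would then follow by specializing~(2) to $\psi_1=\psi_2=0$: here $\Delta_\psi\equiv 0$ is locally H\"older, $P_{\Phi_i}(0)=h(\Phi_i)$ is the topological entropy of the semi-flow, and~(2) yields $\tfrac{1}{C}h(\Phi_1)\le h(\Phi_2)\le C\,h(\Phi_1)$; since entropy is non-negative, this forces $h(\Phi_1)<\infty$ if and only if $h(\Phi_2)<\infty$. (One can also see this directly from Corollary~\ref{entropia}: if $h(\Phi_1)=\infty$ then $P_{\sigma}(-t\tau_1)>0$ for every $t>0$, whence $P_{\sigma}(-t\tau_2)\ge P_{\sigma}(-tC\tau_1)>0$ for every $t>0$ and $h(\Phi_2)=\infty$; the converse is symmetric.) The point I expect to need the most care is the legitimacy of the infimum/supremum description of $P_{\Phi_i}(\psi_i)$ invoked above: in Theorem~\ref{pre} it rests on $\tau_i$ having summable variation and $\Delta_\psi$ being locally H\"older, which is automatic for $\Delta_\psi\equiv 0$ and is part of the standing regularity assumptions otherwise. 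Alternatively one can bypass it by arguing from the variational principle together with the Abramov formula, noting that under \eqref{sim} the sets $\M_\sigma(\tau_1)$ and $\M_\sigma(\tau_2)$ coincide and that the two normalized measures associated with a fixed $\nu$ share the numerator $h_\sigma(\nu)+\int\Delta_\psi\,\dd\nu$, so that $P_{\Phi_i}(\psi_i)$ is a supremum over the same family of ratios, with denominators $\int\tau_i\,\dd\nu$ comparable up to the factor $C$. Apart from this and the routine sign bookkeeping, I do not anticipate a genuine obstacle: everything reduces to applying monotonicity of $P_{\sigma}$ termwise.
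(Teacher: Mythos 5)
Your proposal is correct and follows essentially the same route as the paper's own proof: both rest on the chain $-tC\tau_1\le -t\tau_2\le -\tfrac{t}{C}\tau_1$ for $t\ge 0$ (reversed for $t<0$), monotonicity of the Gurevich pressure, the $\inf/\sup$ characterization of $P_{\Phi_i}$ from Theorem~\ref{pre}, and the reduction of part~(1) to part~(2) via $h(\Phi)=P_\Phi(0)$. Your extra remarks on the infinite-pressure case and the variational-principle alternative only make explicit what the paper leaves implicit.
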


\begin{proof}
The first claim is a consequence of the second, since $h(\Phi)=P_{\Phi}(0)$. We prove the second claim. For every positive $t \in \R$ we have $-\frac{t}{C}\tau_1 \geq -t \tau_2 \geq -tC \tau_1.$ For any $t \geq P_{\Phi_2}(\psi_2)$ we have that
\begin{equation*}
0 \geq P_{\sigma}(\Delta_{\psi_2} - t \tau_2) \geq P_{\sigma}( \Delta_{\psi_1}-tC \tau_1). 
\end{equation*}
Therefore, $P_{\Phi}(\psi_1) \leq C P_{\Phi}(\psi_2). $ On the other hand, if $t < P_{\Phi}(\psi_2)$ then
\begin{equation*}
0 \leq  P_{\sigma}(\Delta_{\psi_2} - t \tau_2) \leq  P_{\sigma}\left( \Delta_{\psi_1}-\frac{t}{C}\tau_1 \right),  
\end{equation*}
thus  for every $t < P_{\Phi_2}(\psi_2)$ we have that  $t/C \leq P_{\Phi_1}(\psi_1)$. Therefore $\frac{P_{\Phi_2}(\psi_2)}{C} \leq P_{\Phi_1}(\psi_1). $ If $P_{\Phi_1}(\psi_1)= \infty$ the so is $P_{\Phi_2}(\psi_2)$ and vice-versa.
An analogous argument holds if $t <0$.
\end{proof}

The existence of equilibrium measures is a more subtle matter and the condition given in equation \eqref{sim} is not enough to guarantee that if one potential has an equilibrium measure so does the corresponding one in the time changed flow. Indeed,

\begin{example}[\emph{Two flows with roof functions with bounded quotient, the first has a measure of maximal entropy while the second does not.}]
Let $(\Sigma, \sigma)$ be the full-shift. Consider the suspension semi-flow $\Phi_1$ with base $\Sigma$ and roof function $\tau_1 |_{C_n}:= \log (n(\log(2n))^2)$. This flow has entropy equal to one and $P_{\sigma}(-\tau_1)=0$. If we denote by $\nu_1$ the equilibrium measure corresponding to $-\tau_1$ we see that $\mu_1= \nu_1 \times m$ is the measure of maximal entropy for the flow $\Phi_1$.
Let $\Phi_2$ be the  suspension semi-flow  with base $\Sigma$ and roof function $\tau_2 |_{C_n}:= \log ((n+8)(\log(2n))^2)$ this flow has entropy equal to one and  $P_{\sigma}(-\tau_2) <0$. Therefore it does not have a measure of maximal entropy. Note that
\begin{equation*}
\lim_{n \to \infty} \frac{\tau_1|C_n}{\tau_2|C_n}=1,
\end{equation*}
In particular the roof functions satisfies equation \eqref{sim}.
\end{example}


\section{Stability results} \label{sec:sta}
This section is devoted to describe the space $\cS$ of all suspension flows defined over  a countable Markov shift $\Sigma$. We will show that the thermodynamic properties are stable or unstable depending upon the combinatorial structure of the shift space $\Sigma$. Note that the space $\cS$ can clearly be identified with the space
\[  \left\{ \tau:\Sigma \to \R: \tau \text{ is positive, locally H\"older and }  \sum_{i=0}^{\infty}\tau(\sigma^i x)=\infty  \right\}. \]
This space can be made into a topological space in the following manner (see \cite[Section 2.2]{cs}), fix an infinite sequence $\omega= (\omega_n)_n$, with $0\leq \omega_n \leq \infty$. For a potential $\phi:\Sigma \to \R$ we define
\begin{equation*}
\| \phi \|_{\omega} := \sup|\phi| + \sum_{n=1}^{\infty} \omega_n V_n(\phi), \text{ where } 0 \cdot \infty=0,
\end{equation*}
an $\epsilon-$neighbourgood of $\phi$ is given by
\begin{equation*}
B(\phi, \epsilon)= \left\{ \phi' \in \mathcal{S}: \| \phi -\phi'\|_{\omega} < \epsilon \right\}. 
\end{equation*}
The $\omega-$topology is generated by $B(\phi, \epsilon)$. If $\omega=(0,0,0, \dots)$ we obtain the sup norm, if $\omega=(0,1,1,1,1, \dots)$ we obtain the summable variation norm and if $\omega=(0,\theta^{-1}, \theta^{-2}, \dots)$ we obtain the H\"older  norm.
Note that if $\phi$ is a potential which is constant on cylinders of length one then for every $\omega$ we have that $\|\phi\|_{\omega}=\sup |\phi|.$ Also note that if $\|\phi \|_{\omega} < \epsilon$ then $\sup |\phi| < \epsilon$.
We study two cases, the full-shift and the renewal  shift, for which the pressure function is very well understood.

\subsection{The full-shift case}
We begin studying the case in which $\Sigma$ is the full-shift on a countable alphabet. The behaviour of the pressure function $t \mapsto P_{\sigma}(-t \tau)$ is completely understood in this context and when finite is a real analytic function (see subsections \ref{ss:bip} and \ref{sb:ex}).

\begin{remark}
We stress that if $\Sigma$ is the full-shift and $\phi$ is such that $P_{\sigma}(\phi)< \infty$ then
$\|\phi \|_{\omega}= \infty$ for any choice of $\omega$.
\end{remark}

The following theorem characterizes the behaviour of the flow in terms of the finitness (or not) of the entropy. Denote by $\mathcal{F} \subset \cS$ the flows with finite entropy and $\mathcal{I} \subset \cS$ those with infinite entropy.

\begin{theorem} \label{full}

The sets $\mathcal{F}$ and $\mathcal{I}$ are both open with respect to any $\omega-$topology. 
\end{theorem}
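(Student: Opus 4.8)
The plan is to show that both $\mathcal F$ and $\mathcal I$ are open by exhibiting, around each flow in either set, an $\omega$-ball contained in the same set; since every $\omega$-ball contains a sup-norm ball (as $\|\phi\|_\omega \geq \sup|\phi|$), it suffices to work with the sup-norm throughout, which makes the $\omega$ irrelevant. Fix a roof function $\tau$ and recall from Corollary \ref{entropia} that $h(\Phi_\tau) = \inf\{t \in \R : P_\sigma(-t\tau) \leq 0\}$, and that for the full-shift $\Sigma$ with $\tau|C_n = -\varphi|C_n$ we have $e^{P_\sigma(-t\tau)} = \sum_{n=1}^\infty e^{-t\,\tau|C_n}$ by \eqref{Dirichlet}, which is monotone decreasing in $t$ and ranges in $(0,\infty]$. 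The key observation I would use is a simple comparison: if $\tau'$ is another roof function with $\sup|\tau - \tau'| < \epsilon$, then for every $n$ and every $t \geq 0$ we have $e^{-t\epsilon} e^{-t\,\tau|C_n} \leq e^{-t\,\tau'|C_n} \leq e^{t\epsilon} e^{-t\,\tau|C_n}$, hence summing over $n$,
\[
P_\sigma(-t\tau) - t\epsilon \;\leq\; P_\sigma(-t\tau') \;\leq\; P_\sigma(-t\tau) + t\epsilon
\]
(with the convention that $+\infty$ on one side forces $+\infty$ on the other). In other words, perturbing the roof uniformly by $\epsilon$ shifts the pressure graph vertically by at most $t\epsilon$ at height $t$.

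Next I would use this to control the entropy. Suppose $\Phi_\tau \in \mathcal F$, so $h := h(\Phi_\tau) < \infty$; pick any finite $t_1 > h$, so that $P_\sigma(-t_1\tau) < 0$ (strict, since for the full shift the pressure is real-analytic and strictly decreasing where finite, and it is $\leq 0$ at $t_1$; if one worries about equality at the infimum, just take $t_1$ strictly larger). Set $\delta := -P_\sigma(-t_1\tau) > 0$ and choose $\epsilon < \delta/t_1$. Then for any $\tau'$ with $\sup|\tau-\tau'| < \epsilon$ the inequality above gives $P_\sigma(-t_1\tau') \leq P_\sigma(-t_1\tau) + t_1\epsilon < 0$, so $h(\Phi_{\tau'}) \leq t_1 < \infty$, i.e. $\Phi_{\tau'} \in \mathcal F$. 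Conversely, suppose $\Phi_\tau \in \mathcal I$, so $h(\Phi_\tau) = \infty$, meaning $P_\sigma(-t\tau) > 0$ for all $t \in \R$ (including large $t$); here I must be slightly more careful because the perturbation bound $t\epsilon$ grows with $t$. Fix any large $t_2$ with $\eta := P_\sigma(-t_2\tau) > 0$ and choose $\epsilon < \eta/t_2$; then $P_\sigma(-t_2\tau') \geq P_\sigma(-t_2\tau) - t_2\epsilon > 0$, so $h(\Phi_{\tau'}) > t_2$. But this only shows entropy exceeds $t_2$, not that it is infinite, so I would instead argue: for $\Phi_\tau \in \mathcal I$ to fail means $P_\sigma(-t\tau)$ becomes $\leq 0$ for some $t$, equivalently (since in the infinite-entropy case $s_\infty = \infty$, i.e. $\sum_n e^{-t\,\tau|C_n} = \infty$ for every $t$) the defining series diverges for all $t$; and divergence for all $t$ is clearly preserved under a uniform $\epsilon$-perturbation because $\sum_n e^{-t\,\tau'|C_n} \geq e^{-t\epsilon}\sum_n e^{-t\,\tau|C_n} = \infty$. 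Thus $\mathcal I$ is automatically open — indeed any sup-norm perturbation of an infinite-entropy full-shift flow still has a divergent Dirichlet series at every $t$, hence infinite entropy.

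The only genuine subtlety, and the step I expect to be the main obstacle, is the finite-entropy case: I must ensure that the chosen $t_1 > h(\Phi_\tau)$ actually has $P_\sigma(-t_1\tau)$ \emph{strictly} negative, not merely $\leq 0$, so that there is room $\delta > 0$ to absorb the perturbation. For the full shift this is where I invoke the structure theory of subsections \ref{ss:bip} and \ref{sb:ex}: by Theorem \ref{bip} applied to $\phi = -\tau$, the function $t \mapsto P_\sigma(-t\tau)$ is real-analytic (hence strictly decreasing) on $(s_\infty, \infty)$ where it is finite, and at $t = h(\Phi_\tau)$ it equals $0$ whenever that value is attained in the analytic regime; in all cases one can choose $t_1 \in (h(\Phi_\tau), \infty)$ with $P_\sigma(-t_1\tau) < 0$ because the pressure is continuous and strictly decreasing past the entropy and tends to $-\infty$ (or is already negative just above $h$). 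I would spell out this strict-negativity point carefully, then assemble the two halves: $\mathcal F$ open via the vertical-shift bound at a single witness $t_1$, and $\mathcal I$ open via preservation of everywhere-divergence of the series. Both arguments use only the sup-norm, so they hold in every $\omega$-topology, completing the proof.
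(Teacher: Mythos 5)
Your overall strategy (reduce to the sup-norm, show the pressure moves by at most $t\epsilon$ under a $\sup$-perturbation of size $\epsilon$, then use a single witness value of $t$ for $\mathcal F$ and everywhere-divergence for $\mathcal I$) is the same as the paper's, but as written there are two genuine gaps.

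First, your key comparison inequality and your divergence-preservation argument are both derived from the Dirichlet-series formula \eqref{Dirichlet}, i.e.\ from $e^{P_\sigma(-t\tau)}=\sum_n e^{-t\,\tau|C_n}$. That identity is only valid for potentials that are \emph{constant on cylinders of length one}, whereas $\cS$ consists of all positive locally H\"older roof functions. So your proof covers only a special subclass. The statement you want, $|P_\sigma(-t\tau)-P_\sigma(-t\tau')|\le |t|\sup|\tau-\tau'|$ (with the convention that finiteness on one side forces finiteness on the other), is true in general, but it must be obtained from the approximation property $P_\sigma(\phi)=\sup\{P_K(\phi):K\in\cK\}$ of Theorem \ref{thm:pcms} combined with the classical bound $|P_K(\phi)-P_K(\psi)|\le\sup|\phi-\psi|$ on compact invariant sets; this is exactly how the paper argues, both for the finite-entropy estimate and for showing that a finite pressure value for $\tau_2$ contradicts $P_K$-approximation of an infinite pressure value for $\tau'$.

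Second, your treatment of $\mathcal I$ rests on the parenthetical claim that infinite entropy implies $\sum_n e^{-t\,\tau|C_n}=\infty$ (equivalently $P_\sigma(-t\tau)=\infty$) for \emph{every} $t$, and you offer no proof of it. This is precisely the delicate point: a priori the pressure could be finite and strictly positive for all $t$, decreasing to a positive limit, in which case the entropy is infinite yet a small downward perturbation could push the pressure below zero at some finite $t$. The paper excludes this by invoking Kempton's observation that, on the full shift, the hypothesis $\sum_{i}\tau(\sigma^i x)=\infty$ forces $\tau$ to be bounded away from zero, and then the ergodic-optimization result of \cite{jmu} to conclude that wherever the pressure is finite its asymptotic slope is bounded away from zero, so a single finite value would drive $P_\sigma(-t\tau)$ to $-\infty$ and contradict infinite entropy. (The same ingredients are what legitimately give you, in the $\mathcal F$ case, a $t_1$ with $P_\sigma(-t_1\tau)$ strictly negative; ``real-analytic'' alone does not give strictly decreasing.) Also, a small slip: an $\omega$-ball is \emph{contained in} the sup-ball of the same radius, not the other way around; your conclusion that sup-norm openness suffices is nevertheless correct.
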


\begin{proof}
Let $(Y, \Phi)$ be a suspension semi-flow defined over the full-shift with locally H\"older roof function $\tau$ and with finite entropy. Since the system has finite entropy it means that there exists $t_0 >0 $ such that $P_{\sigma}(-t_0 \tau)= A <0$.  Recall that we are assuming that $\sum_{i=0}^{\infty}\tau(\sigma^i x)=\infty$, Kempton \cite[p.40]{keth} showed that for the full-shift  this condition implies that $\tau$ is bounded away from zero.
In particular, by ergodic optimization results (see \cite[Theorem 1]{jmu}) we have that the asymptotic slope of the pressure is bounded away from zero. That is, there exist $A<0$ such that,
\[\lim_{t \to \infty} P_{\sigma}'(-t \tau) < A < 0.\]
This implies that $\lim_{t \to \infty} P_{\sigma}(- t \tau)= -\infty$.   Let $\epsilon >0$ and consider $\tau' \in \mathcal{S}$ such that
$t_0 \| \tau -\tau'\|_{\omega}< \epsilon$. For every every $K \subset \Sigma$ compact and invariant we have that  (see \cite[Theorem 9.7 (iv)]{wa}) 
\begin{equation*}
|P_K(-t_0\tau) -P_K(-t_0 \tau')| \leq  t_0 \sup |\tau -\tau'|  \leq  \|t_0 \tau -t_0\tau'\|_{\omega} < \epsilon. \end{equation*}
It is a direct consequence of the approximation property of the Gurevich pressure (see Theorem \ref{thm:pcms}) that
\begin{equation*}
|P_{\sigma}(-t_0\tau) -P_{\sigma}(-t_0 \tau')| < \epsilon.
\end{equation*}
This implies that there exists $t_1 \in \R$ such that $P_{\sigma}(-t_1 \tau')<0$. In particular the flow
$(Y, \Phi')$ with base $(\Sigma,\sigma) $ and roof function $\tau'$ has finite entropy.  

Let us consider now $(Y', \Phi)$  a suspension semi-flow defined over the full-shift with locally H\"older roof function $\tau'$ with infinite entropy. Let  $\epsilon >0$ and $\tau_2 \in \mathcal{S}$  such that $\|\tau' -\tau_2\|_{\omega}< \epsilon$. We will show that the suspension semi-flow with base the full-shift and roof function $\tau_2$ has infinite entropy. Note that since $\sum_{i=0}^{\infty}\tau'(\sigma^i x)=\infty$ we have that $\tau'$ is bounded away from zero. Therefore, for every $t \in \R$ we have that $P_{\sigma}(-t \tau')= \infty$. Indeed, assume by way of contradiction that there exists $t'>0$ such that $P_{\sigma}(-t' \tau')< \infty$. Then ergodic optimization results (\cite[Theorem 1]{jmu}) imply that the asymptotic derivative of the pressure is a strictly negative constant. Thus, there exists $t'' >0$ such that $P_{\sigma}(-t'' \tau')< 0$. But this would imply that the entropy of the corresponding semi-flow is not infinite. Therefore, in order to prove that the suspension semi-flow with base the full-shift and roof function $\tau_2$ has infinite entropy it suffices to prove that for every $t \in \R$, $P_{\sigma}(-t\tau_2)= \infty$. Assume by way of contradiction that there exists $t_1 \in \R$ such that $P_{\sigma}(-t_1 \tau_2) < \infty$. Let $K \subset \Sigma$ be compact and invariant. Then
\begin{equation} \label{ine}
|P_{K}(-t_1 \tau') - P_{K}(-t_1 \tau_2)| \leq |t_1| \sup |\tau' - \tau_2| \leq  |t_1| \|\tau' - \tau_2 \|_{\omega} \leq |t_1| \epsilon.
\end{equation}
It follows form the approximation property of the Gurevich pressure and the fact that $P_{\sigma}(-t_1 \tau_2) < \infty$ and
$P_{\sigma}(-t_1 \tau') = \infty$ that the inequality in equation \eqref{ine} can not hold for an arbitrary compact invariant set $K$.
This contradiction proves that the set  $\mathcal{I}$ is open with respect to any $\omega-$topology. 
\end{proof}

Consider now the subset $\mathcal{C}\subset  \mathcal{F}$ of flows having measures of maximal entropy and  $\mathcal{N}\subset  \mathcal{F}$ of those not having such a measure. 

\begin{proposition}
Neither $\mathcal{C}$ nor $\mathcal{N}$ are open sets with respect to any $\omega-$topology.
\end{proposition}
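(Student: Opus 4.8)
The plan is to show that neither $\mathcal{C}$ nor $\mathcal{N}$ is open by producing, in each case, a flow in the set together with arbitrarily small perturbations (in any $\omega$-topology) landing in the complementary set. Recall that a flow with finite entropy $h(\Phi)$ associated to roof function $\tau$ has a measure of maximal entropy precisely when $P_\sigma(-h(\Phi)\tau)=0$ and that potential has an equilibrium measure with $\int\tau\,d\nu<\infty$; it fails to have one when $P_\sigma(-h(\Phi)\tau)<0$ (Theorem \ref{thm_eqstate} and Corollary \ref{entropia}). Since we work over the full shift and the roof functions will be constant on one-cylinders, the $\omega$-norm of any perturbation that is also constant on one-cylinders equals its sup norm, so it suffices to perturb the values $\tau|C_n$ uniformly by an arbitrarily small amount. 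The key mechanism, already visible in Example \ref{ex:1} and the example following Theorem \ref{thm:pre}, is that shifting a roof function by a tiny additive (or multiplicative) constant can tip $P_\sigma(-h\tau)$ strictly below $0$ while leaving the entropy value unchanged, or conversely can move a strictly negative value of $P_\sigma(-h\tau)$ up to $0$.

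First I would prove $\mathcal{N}$ is not open. Take the flow $\Phi_1$ of Example \ref{ex:1} with $\tau_1|C_n=\log((n+k)\log(n+k))$ and entropy $h(\Phi_1)=1$ with $P_\sigma(-\tau_1)<0$; this flow lies in $\mathcal{N}$. Now perturb to $\tau_t|C_n := \tau_1|C_n + t$ for small $t>0$, which is a perturbation of $\omega$-norm $t$. The map $t\mapsto P_\sigma(-\tau_t)=\log\sum_n (n+k)^{-1}(\log(n+k))^{-2}e^{-t}$ is continuous and strictly increasing in $|t|$... more precisely one should perturb so as to \emph{increase} the pressure at $t=1$: take instead $\tau_t|C_n:=\tau_1|C_n - t$, giving $P_\sigma(-\tau_t)=P_\sigma(-\tau_1)+t$ as a sum over a convergent series scaled by $e^{t}$, hence $P_\sigma(-\tau_t)$ increases continuously from the negative value $P_\sigma(-\tau_1)$ and crosses $0$ at some small $t^*>0$. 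Strictly speaking $h(\Phi_{t^*})$ need not equal $1$, so I would argue directly: for $t$ slightly less than the value where $P_\sigma(-\tau_t)$ would reach $0$, the entropy is still determined by $P_\sigma(-t'\tau_t)\le 0$, and by choosing the perturbation family correctly one arranges $P_\sigma(-h(\Phi_{t^*})\tau_{t^*})=0$ exactly. The cleanest route is to mimic the last example of subsection \ref{ss:nc}: the family $\tau_t=\tau_1+t$ has $h(\Phi_t)=1$ as long as $P_\sigma(-\tau_t)\le 0$, and there is a threshold $t^*>0$ where $P_\sigma(-\tau_{t^*})=0$; for $0<t<t^*$ the flow $\Phi_t$ lies in $\mathcal{N}$ but $\Phi_{t^*}\in\mathcal{C}$, and since $\|\tau_t-\tau_{t^*}\|_\omega=|t-t^*|\to 0$ as $t\to t^*$, the flow $\Phi_{t^*}\in\mathcal{C}$ is a limit of flows in $\mathcal{N}$; so $\mathcal{C}$ is not open. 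Symmetrically, $\Phi_{t^*}\in\mathcal{C}$ is approximated by $\Phi_t$ with $t>t^*$, which have $P_\sigma(-\tau_t)>0$ hence entropy $>1$ and — needing a short check via Theorem \ref{thm_eqstate} that $P_\sigma(-h(\Phi_t)\tau_t)<0$ for such $t$ — lie in $\mathcal{N}$, showing $\mathcal{N}$ is not open either.

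Thus a single one-parameter family $\tau_t=\tau+t$ built on the Example \ref{ex:1} roof function realizes both failures at once: at the critical parameter the flow has a measure of maximal entropy, and on both sides of it the flow does not, with all these flows mutually close in every $\omega$-topology. I would write the proof as: (i) fix $k$ and $\tau|C_n=\log((n+k)\log(n+k))$; (ii) note $P_\sigma(-\tau)<0$, $h(\Phi)=1$, so $\Phi\in\mathcal{N}$; (iii) set $\tau_t=\tau+t$, observe $P_\sigma(-t'\tau_t)$ is computed by the explicit Dirichlet-type series and that $(0,\infty)\ni t\mapsto P_\sigma(-\tau_t)$ is continuous, strictly increasing, negative at $t=0$, with $P_\sigma(-\tau_t)\to 0^-$ is false — it tends to $\infty$, so there is a unique $t^*$ with $P_\sigma(-\tau_{t^*})=0$; (iv) conclude $h(\Phi_t)=1$ for $t\in[0,t^*]$ with a measure of maximal entropy exactly at $t^*$ and none for $t\in[0,t^*)$, and $h(\Phi_t)>1$ with no measure of maximal entropy for $t>t^*$; (v) since $\|\tau_t-\tau_{t^*}\|_\omega=|t-t^*|$, both $\mathcal{C}$ and $\mathcal{N}$ fail to be open.

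The main obstacle is the bookkeeping in step (iv): one must verify that at parameters $t>t^*$ the finite entropy $h(\Phi_t)$ satisfies $P_\sigma(-h(\Phi_t)\tau_t)<0$ (strict), so that Theorem \ref{thm_eqstate} genuinely rules out a measure of maximal entropy — this uses that near $t^*$ the pressure function $t'\mapsto P_\sigma(-t'\tau_t)$ has a strictly negative right-derivative at its zero, equivalently that $\int \tau_t\,d\nu=\infty$ for the relevant equilibrium measure $\nu$, which is exactly the divergence $\sum_n (\log(n+k))^{2}/((n+k)(\log(n+k))^{2})=\sum_n 1/(n+k)=\infty$ phenomenon from Example \ref{ex:1}. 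Getting this divergence to persist for the perturbed roof functions (it does, since adding a constant $t$ to $\tau|C_n$ multiplies the summand by a bounded factor $e^{-h t'}$ and does not affect convergence of the $\int\tau\,d\nu$ integral by more than a constant) is the point that needs a careful but routine estimate.
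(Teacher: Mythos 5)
Your overall strategy --- a one-parameter additive perturbation of a roof function constant on $1$-cylinders, so that the $\omega$-norm of the perturbation is just its sup norm --- is the same as the paper's, but the specific family you chose does not do what you claim, and the failure occurs exactly at the point you flag as ``the main obstacle''. With $\tau|C_n=\log\bigl((n+k)\log^2(n+k)\bigr)$ and $\tau_t=\tau-t$, at the critical parameter $t^*$ where $P_\sigma(-\tau_{t^*})=0$ the equilibrium (Gibbs) measure $\nu$ for $-\tau_{t^*}$ has $\nu(C_n)=e^{t^*}\bigl((n+k)\log^2(n+k)\bigr)^{-1}$, so
\begin{equation*}
\int\tau_{t^*}\,d\nu \;\asymp\; \sum_{n}\frac{\log\bigl((n+k)\log^2(n+k)\bigr)}{(n+k)\log^2(n+k)}\;\asymp\;\sum_n\frac{1}{(n+k)\log(n+k)}=\infty,
\end{equation*}
and by Theorem \ref{thm_eqstate} the flow $\Phi_{t^*}$ has \emph{no} measure of maximal entropy: $\Phi_{t^*}\in\mathcal{N}$, not $\mathcal{C}$. (Your roof is the borderline case $\theta=2$ of part (\ref{lem11_4}) of Lemma \ref{Lem_disc}, where the one-sided derivative of the pressure at $s_\infty=1$ is $-\infty$.) Conversely, for $t>t^*$ the entropy $h(\Phi_t)=s_t$ is the unique zero of $s\mapsto P_\sigma(-s\tau_t)$ in the region $s>1$ where that function is finite, real analytic and strictly decreasing to $-\infty$; there $P_\sigma(-s_t\tau_t)=0$ exactly (not $<0$), and $\int\tau_t\,d\nu_{s_t}=-\tfrac{d}{ds}P_\sigma(-s\tau_t)\big|_{s=s_t}<\infty$, so these flows lie in $\mathcal{C}$. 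Your family therefore realizes $\mathcal{N}$ on $[0,t^*]$ and $\mathcal{C}$ on $(t^*,\infty)$: it does prove that $\mathcal{N}$ is not open (the point $\tau_{t^*}\in\mathcal{N}$ is a limit of elements of $\mathcal{C}$), but it produces no element of $\mathcal{C}$ approximated by elements of $\mathcal{N}$, so it does not prove that $\mathcal{C}$ fails to be open.

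The missing ingredient is the one the paper uses: start from a roof of discontinuous type whose pressure has a \emph{finite} one-sided derivative at $s_\infty$, i.e.\ $\tau|C_n=\log\bigl((n+k)\log^\theta(n+k)\bigr)$ with $\theta>2$ tuned via part (\ref{lem11_3}) of Lemma \ref{Lem_disc} so that $P_\sigma(-\tau)=0$. Then $P_\sigma(-s\tau)=\infty$ for $s<1$ pins the entropy at $1$, the finite derivative gives $\int\tau\,d\nu<\infty$ and hence $\Phi_\tau\in\mathcal{C}$; while $\tau+\epsilon$ still has entropy $1$ (the infinite branch is unchanged) but $P_\sigma(-(\tau+\epsilon))=-\epsilon<0$, hence no measure of maximal entropy, at $\omega$-distance $\epsilon$ from $\tau$. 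This single pair handles both sets at once. Your $\log^2$ choice cannot be repaired by the additive perturbation alone, because along that family no flow ever lies in $\mathcal{C}$ with its entropy pinned at the critical exponent.
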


\begin{proof}
There exists a locally constant (on cylinders of length one), positive function $\tau_1:\Sigma \to \R$ such that
\begin{eqnarray*}
P_{\sigma}(-t \tau_1)=
\begin{cases}
\infty & \text{ if } t < t_0;\\
0 & \text{ if } t = t_0;\\
\text{negative} & \text{ if } t > t_0.
\end{cases}
\end{eqnarray*}
and $\lim_{t \to t_0^+} P_{\sigma}'(-t \tau_1) < \infty$ (see Lemma \eqref{Lem_disc}). The suspension semi-flow $\Phi_1$ with base $(\Sigma, \sigma)$ and roof function $\tau_1$ has entropy $h(\Phi_1)=t_0$ and the equilibrium measure $\mu$ corresponding to $-t_0 \tau_1$  is such that $\int \tau_1 d \mu < \infty$ (because of the assumption on the derivative of the pressure). Therefore, the measure
\[ \frac{\mu \times m}{\int \tau_1 d \mu},\]
is the measure of maximal entropy for the flow $\Phi_1$. Let $\tau_2:\Sigma \to \R^+$ be a locally constant potential (on cylinders of length one) such that for every $x \in \Sigma$, 
 $\tau_1(x)-\tau_2(x)=\epsilon.$ Since both potentials are constant on cylinders of length one we have that $\| \tau_1 - \tau_2\|_{\omega} =
 \sup | \tau_1 - \tau_2 | \leq \epsilon$ and
  \begin{eqnarray*}
P_{\sigma}(-t \tau_2)=
\begin{cases}
\infty & \text{ if } t < t_0;\\
\text{negative} & \text{ if } t \geq t_0.
\end{cases}
\end{eqnarray*}
Therefore, the  suspension semi-flow $\Phi_2$ with base $(\Sigma, \sigma)$ and roof function $\tau_2$ has entropy $h(\Phi_2)=t_0$ and no measure of maximal entropy. This implies that the set $\mathcal{C}$ is not open with respect to any $\omega-$topology. The same argument shows that the set $\mathcal{N}$ is not open   with respect to any $\omega-$topology. 

\end{proof}

The number $s_{\infty}(\Phi)$ is a relevant dynamical quantity of the suspension flow. It  was first used in \cite{ij} to construct examples of regular potentials exhibiting phase transitions. In \cite{irv} geodesic flows defined over non-compact manifolds of variable pinched negative curvature were studied. The manifolds considered are Hadamard manifold with an extended Schottky group (which contains parabolic elements). It turns out that these manifolds can be coded by suspension flows defined over countable Markov shifts satisfying the BIP property and with a regular roof function (see \cite[Section 4.2]{irv}). The number $s_{\infty}(\Phi)$ is related to the amount of entropy that measures escaping through the cups of the manifold can carry and with the  largest parabolic critical exponent of the extended Schottky group (see \cite[Theorem 4.12]{irv}). It is therefore of interest to understand how does this quantity varies with time changes of the flow. First note that from Lemma \ref{lem_c1} it follows that:

\begin{lemma} \label{lem_s}
Let $(\Sigma, \sigma)$ be the full-shift on a countable alphabet. For every $\alpha > 0$ there exists a positive locally H\"older potential $\tau$ such that the suspension flow $(Y, \Phi)$ with base $\Sigma$ and roof function $\tau$ satisfies $s_{\infty}(\Phi)=\alpha.$ 
\end{lemma}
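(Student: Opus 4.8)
The plan is to reduce Lemma \ref{lem_s} to part (\ref{lemma1_one}) of Lemma \ref{lem_c1}, which already produces, for the full-shift on a countable alphabet, a family of locally constant roof functions whose associated flows realize any prescribed value of $s_\infty$. Concretely, given $\alpha>0$, set $b=1/\alpha$ and pick any $a>0$; the roof function $\tau=-\varphi_{(a,b)}$, with $\varphi_{(a,b)}|C_n=-b\log(n+a)$, is positive (since $\log(n+a)>0$ for $n\ge 1$ once $a>1-e^{-1}$, say $a=1$), locally constant on cylinders of length one, hence trivially locally H\"older. By Lemma \ref{lem_c1}(\ref{lemma1_one}) we have $s_\infty(\Phi_{(a,b)})=1/b=\alpha$, which is exactly the assertion.

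First I would record the standing requirement on a roof function for a suspension flow: $\tau$ must be positive, locally H\"older, and satisfy $\sum_{i=0}^\infty \tau(\sigma^i x)=\infty$ for every $x$. For $\tau|C_n=b\log(n+a)$ with $a\ge 1$ the first two are immediate, and the divergence of the Birkhoff sums holds because each term $b\log(n+a)\ge b\log 2>0$ is bounded below by a positive constant, so the series diverges. Thus $\tau$ genuinely defines a suspension flow in the sense of Section 3, and $s_\infty(\Phi)$ is well defined for it.

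Second, I would invoke Lemma \ref{lem_c1}(\ref{lemma1_one}) verbatim: for the family $\varphi_{(a,b)}$ one has $s_\infty(\Phi_{(a,b)})=1/b$ independently of $a>0$. Taking $b=1/\alpha$ yields $s_\infty(\Phi_{(a,1/\alpha)})=\alpha$. Since $\alpha>0$ was arbitrary, this exhibits the required potential for every $\alpha>0$, completing the proof. I would also remark that Lemma \ref{lem_c1}(\ref{lemma1_one}) is itself proved via the identity $e^{P_\sigma(\varphi_{(a,b)})}=\zeta(b,a+1)$ and Theorem \ref{Hurwitz_zeta} (the Hurwitz zeta function), so no new analytic input is needed here.

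There is essentially no obstacle: the content is already contained in Lemma \ref{lem_c1}, and the only thing to check is the bookkeeping that the chosen $\tau$ satisfies the three defining conditions of a roof function — in particular that it is bounded away from zero so the Birkhoff-sum divergence holds. The statement is therefore a direct corollary, and the proof is a single short paragraph.

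\begin{proof}
Given $\alpha>0$, apply Lemma \ref{lem_c1} with $a=1$ and $b=1/\alpha$, and let $\tau:=-\varphi_{(1,1/\alpha)}$, that is $\tau|C_n=\tfrac{1}{\alpha}\log(n+1)$ for $n\in\N$. Then $\tau$ is positive, it is constant on cylinders of length one and hence locally H\"older, and since $\tau|C_n\ge \tfrac{1}{\alpha}\log 2>0$ we have $\sum_{i=0}^\infty \tau(\sigma^i x)=\infty$ for every $x\in\Sigma$; thus $\tau$ is an admissible roof function. By Lemma \ref{lem_c1}(\ref{lemma1_one}), the suspension flow $(Y,\Phi)$ with base $\Sigma$ and roof function $\tau$ satisfies $s_\infty(\Phi)=1/b=\alpha$.
\end{proof}
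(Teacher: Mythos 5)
Your proof is correct and follows exactly the route the paper intends: the paper states Lemma \ref{lem_s} as an immediate consequence of Lemma \ref{lem_c1}(\ref{lemma1_one}) without writing out the details, and your choice of $b=1/\alpha$ together with the routine verification that $\tau=-\varphi_{(1,1/\alpha)}$ is an admissible roof function is precisely the intended argument.
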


We will use the following notation, if $\tau \in \cS$ we denote by $s_{\infty}(\Phi_{\tau})$ the number $s_{\infty}$ for the suspension flow $(Y, \Phi)$ with base the full-shift and roof function $\tau$. In order to study the properties of $s_{\infty}$ in $\cS$ we define an equivalence relation.  We say that $\tau, \tilde{\tau} \in \cS$ are related  if and only if  $s_{\infty}(\Phi_{\tau})=s_{\infty}(\Phi_{\tilde{\tau}})$. By Lemma \ref{lem_s} we can identify each equivalence class $[\tau]$ with a number $\alpha> 0,$ say $[\tau]=[\alpha]$ if and only if $s_{\infty}(\Phi_{\tau})=\alpha$.  We also define 
\begin{equation*}
[\infty]:=\{ \tau \in \cS: P_{\sigma}(-t\tau) =\infty \mbox{ for every } t> 0 \}.
\end{equation*}
For any $\alpha > 0,$ the class $[\alpha]$ can be partitioned in two sets 
$$
[\alpha]=\mathcal{A}^{[\alpha]}_{cont} \sqcup \mathcal{A}^{[\alpha]}_{disc}
$$
where 
$$
\mathcal{A}^{[\alpha]}_{cont}:=\{ \tau \in \cS: s_{\infty}(\Phi_{\tau})=\alpha \text{ and } -\tau \text{ is of continuous type}\}, \mbox{ and }
$$
$$
\mathcal{A}^{[\alpha]}_{disc}:=\{ \tau \in \cS: s_{\infty}(\Phi_{\tau})=\alpha \text{ and } -\tau \text{ is of discontinuous type}\}.
$$
The space of suspension flows over the full-shift can be partitioned as:
$$
\bigcup_{\alpha > 0 \vee \alpha=\infty}  [\alpha]=[\infty] \cup \bigcup_{\alpha > 0}\left(\mathcal{A}^{[\alpha]}_{cont} \sqcup \mathcal{A}^{[\alpha]}_{disc}\right)=[\infty] \sqcup \mathcal{A}_{cont} \sqcup \mathcal{A}_{disc},
$$
where $\mathcal{A}_{cont}:=\bigcup_{\alpha > 0}\mathcal{A}^{[\alpha]}_{cont}$ and  $ \mathcal{A}_{disc}:=\bigcup_{\alpha > 0}\mathcal{A}^{[\alpha]}_{disc}.$ The topological properties of this partition will describe the behaviour of the number $s_{\infty}$ in $\cS.$ 
The next result shows that if two suspension flows have different corresponding $s_{\infty}$ then their roof functions are at infinite distance with respect to any $\omega-$norm.

\begin{theorem} \label{ts} If $\tau,\tilde{\tau}\in \bigcup_{\alpha \in\mathbb{R}}[\alpha]$ and $\|\tau-\tilde{\tau} \|_w<\infty,$ then $[\tau]=[\tilde{\tau}].$
\end{theorem}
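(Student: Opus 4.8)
The plan is to show that the condition $\|\tau-\tilde\tau\|_\omega<\infty$ forces $\sup|\tau-\tilde\tau|<\infty$, and that a uniformly bounded perturbation of the roof function cannot change $s_\infty$. The key point is that for the full-shift with a locally constant potential (or, after a Hölder control, any roof function of the class $\cS$), the value $s_\infty$ is determined by the exponential growth rate of the $\lambda_n$'s, where $-\tau|C_n \approx -\log\lambda_n$, via $s_\infty=\inf\{t:\sum_n\lambda_n^{-t}<\infty\}$ as noted after \eqref{Dirichlet}. A uniformly bounded change $|\tau-\tilde\tau|\le M$ only multiplies the $n$-th summand by a factor in $[e^{-tM},e^{tM}]$, which does not affect convergence or divergence of the series for any fixed $t>0$, hence leaves $s_\infty$ unchanged.

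First I would recall that $\|\tau-\tilde\tau\|_\omega=\sup|\tau-\tilde\tau|+\sum_{n\ge1}\omega_nV_n(\tau-\tilde\tau)$, so the hypothesis immediately gives a constant $M:=\sup|\tau-\tilde\tau|<\infty$, and therefore $-\tilde\tau-M\le -\tau\le -\tilde\tau+M$ pointwise on $\Sigma$. Next, using the variational/approximation characterization of the Gurevich pressure in Theorem \ref{thm:pcms} (monotonicity: if $\phi_1\le\phi_2$ then $P_\sigma(\phi_1)\le P_\sigma(\phi_2)$, and the translation identity $P_\sigma(\phi+c)=P_\sigma(\phi)+c$ for a constant $c$), I would deduce for every $t>0$
\begin{equation*}
P_\sigma(-t\tilde\tau)-tM \le P_\sigma(-t\tau)\le P_\sigma(-t\tilde\tau)+tM.
\end{equation*}
In particular $P_\sigma(-t\tau)=\infty$ if and only if $P_\sigma(-t\tilde\tau)=\infty$; combined with the fact (Theorems \ref{bip} and the discussion in \ref{sb:ex}) that for the full-shift $t\mapsto P_\sigma(-t\tau)$ is $\equiv\infty$ for $t<s_\infty(\Phi_\tau)$ and finite for $t>s_\infty(\Phi_\tau)$, this gives $s_\infty(\Phi_\tau)=s_\infty(\Phi_{\tilde\tau})$, i.e.\ $[\tau]=[\tilde\tau]$. (The hypothesis $\tau,\tilde\tau\in\bigcup_{\alpha\in\R}[\alpha]$ ensures we are in the finite-$s_\infty$ regime so that these thresholds are well-defined real numbers; the case where one of them lies in $[\infty]$ is excluded by that hypothesis, but the displayed inequality handles it anyway.)

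The only mild subtlety — and the step I'd expect to need the most care — is justifying the pressure inequality when $\tau$ is merely locally Hölder rather than locally constant, since then the two-sided bound $-\tilde\tau\mp M\le -\tau$ still holds pointwise but one must invoke monotonicity of $P_\sigma$ for summable-variation potentials rather than the clean Dirichlet-series formula \eqref{Dirichlet}. This is exactly the content of the variational principle in Theorem \ref{thm:pcms} together with the additivity under adding a constant, so it goes through; alternatively one can argue at the level of the compact approximants $P_K$ (Theorem \ref{thm:pcms}, last line, and the classical monotonicity of topological pressure \cite[Theorem 9.7]{wa}) and pass to the supremum over $K\in\cK$. Either route completes the proof.
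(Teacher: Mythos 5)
Your proposal is correct and follows essentially the same route as the paper: extract $M=\sup|\tau-\tilde\tau|<\infty$ from the $\omega$-norm, obtain the Lipschitz bound $|P_\sigma(-t\tau)-P_\sigma(-t\tilde\tau)|\le tM$ via the compact approximants $P_K$ and the approximation property of the Gurevich pressure, and conclude that the finiteness thresholds $s_\infty$ coincide. The paper phrases this as a one-sided argument (assuming $\alpha>\beta$ and showing $P_\sigma(-(\beta+\epsilon)\tau)<\infty$ for every $\epsilon>0$), but the content is identical to your two-sided inequality.
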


\begin{proof}
Assume that $\tau \in [\alpha], \tilde{\tau} \in [\beta]$  and $\alpha>\beta.$ If $\|\tau-\tilde{\tau} \|_w<\infty,$ then $\sup | \tau  -\tilde{\tau} | <M<\infty$ for some $M.$
Let $\epsilon>0.$ For every  $K \subset \Sigma$ compact and invariant set we have that 
$$
|P_K( -(\beta+\epsilon) \tau) -P_K( -(\beta+\epsilon) \tilde{\tau} )| \leq (\beta+\epsilon)  M.
$$

Because of the approximation property of the Gurevich pressure we have that
\begin{equation*}
|P_{\sigma}( -(\beta+\epsilon) \tau)|  \leq (\beta+\epsilon)  M+ |P_{\sigma}(-(\beta+\epsilon) \tilde{\tau} )|<\infty,
\end{equation*}
and this implies that $\alpha \leq \beta+\epsilon.$
\end{proof}

From this result, we have the following consequences that show that the  classification is stable under perturbations with respect to the norm $\| \cdot  \|_{\omega}.$

\begin{corollary}
For every $\alpha > 0,$ the set $[\alpha]$ is open with respect to any $\omega$-topology
and $diam([\alpha])=\infty$, where the diameter is computed with respect to the distance induced by the $\| \cdot \|_{\omega}$ norm.
\end{corollary}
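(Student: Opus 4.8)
The plan is to deduce both assertions from Theorem~\ref{ts} together with the explicit family of roof functions constructed in Lemma~\ref{lem_c1}. For \emph{openness}, fix $\alpha\in(0,\infty)$ and $\tau\in[\alpha]$; I would show that $B(\tau,\epsilon)\subseteq[\alpha]$ for every finite $\epsilon>0$. Let $\tilde\tau\in B(\tau,\epsilon)$, so that $\tilde\tau\in\cS$ and $\sup|\tau-\tilde\tau|\le\|\tau-\tilde\tau\|_{\omega}<\epsilon$. By Theorem~\ref{ts} it suffices to check that $\tilde\tau$ lies in some class $[\beta]$ with $\beta$ finite, equivalently that $\tilde\tau\notin[\infty]$: then $[\tilde\tau]=[\tau]=[\alpha]$. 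To see this, pick $t_0>\alpha$; since $s_{\infty}(\Phi_{\tau})=\alpha$ we have $P_{\sigma}(-t_0\tau)<\infty$, and the elementary comparison estimate $P_{\sigma}(\phi)\le P_{\sigma}(\psi)+\sup(\phi-\psi)$ (immediate from the variational formula in Theorem~\ref{thm:pcms}, or from the $P_K$--approximation argument used in the proof of Theorem~\ref{full}), applied with $\phi=-t_0\tilde\tau$ and $\psi=-t_0\tau$, gives $P_{\sigma}(-t_0\tilde\tau)\le P_{\sigma}(-t_0\tau)+t_0\epsilon<\infty$. Hence $s_{\infty}(\Phi_{\tilde\tau})\le t_0<\infty$, so $\tilde\tau\notin[\infty]$, as needed, and $[\alpha]$ is open in every $\omega$-topology.

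For the \emph{diameter}, fix $\alpha\in(0,\infty)$ and, for each $a>0$, set $\tau_a:=-\varphi_{(a,1/\alpha)}$, i.e. $\tau_a|C_n=\tfrac{1}{\alpha}\log(n+a)$. Each $\tau_a$ is positive, bounded away from zero (so $\sum_{i=0}^{\infty}\tau_a(\sigma^i x)=\infty$) and constant on cylinders of length one, hence locally H\"older and an element of $\cS$; by part~(\ref{lemma1_one}) of Lemma~\ref{lem_c1} we have $s_{\infty}(\Phi_{\tau_a})=\alpha$, so $\tau_a\in[\alpha]$ for all $a>0$. Since $\tau_a-\tau_{a'}$ is constant on cylinders of length one, $\|\tau_a-\tau_{a'}\|_{\omega}=\sup_{n\ge1}\tfrac{1}{\alpha}\bigl|\log(n+a)-\log(n+a')\bigr|$, and for $a>a'$ the map $n\mapsto\log(n+a)-\log(n+a')$ is positive and decreasing, so this supremum is attained at $n=1$ and equals $\tfrac{1}{\alpha}\bigl(\log(1+a)-\log(1+a')\bigr)$. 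Taking $a'=1$ and letting $a\to\infty$ yields $\mathrm{diam}([\alpha])\ge\sup_{a>1}\tfrac{1}{\alpha}\bigl(\log(1+a)-\log 2\bigr)=\infty$.

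I do not expect a genuine obstacle here: once Theorem~\ref{ts} is available the corollary is essentially bookkeeping. The only point that actually requires an argument is that a finite $\omega$-perturbation of a roof function sitting in a \emph{finite} class $[\alpha]$ cannot escape into $[\infty]$ — this is precisely what upgrades ``the classes are pairwise at infinite distance'' to ``each finite class is open'' — and it is dispatched by the pressure comparison estimate above; the infinite-diameter part is handled entirely by the explicit Hurwitz-type family $\tau_a$ from Lemma~\ref{lem_c1}.
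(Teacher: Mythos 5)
Your proof is correct and follows the route the paper intends: the paper states this corollary without proof as an immediate consequence of Theorem \ref{ts}, and your argument is exactly that deduction, supplemented by the explicit Hurwitz-type family from Lemma \ref{lem_c1}(\ref{lemma1_one}) for the diameter claim. The one step you add that the paper silently glosses over --- showing via the pressure comparison $P_{\sigma}(-t_0\tilde\tau)\le P_{\sigma}(-t_0\tau)+t_0\sup|\tau-\tilde\tau|$ that a finite $\omega$-perturbation of $\tau\in[\alpha]$ cannot land in $[\infty]$, which is needed because Theorem \ref{ts} only applies when both roof functions lie in $\bigcup_{\alpha}[\alpha]$ --- is a genuine and correctly handled point.
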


\begin{corollary}
If $\alpha,  \beta > 0$ and $\alpha \neq \beta,$ then $dist([\alpha],[\beta])=\infty, $ with respect to the distance induced by the
$\| \cdot \|_{\omega}$ norm.
\end{corollary}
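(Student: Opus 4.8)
The plan is to read this off directly from Theorem \ref{ts}, of which the statement is essentially the contrapositive packaged as a statement about the distance between equivalence classes. Recall that
\[
dist([\alpha],[\beta]) = \inf\left\{ \|\tau-\tilde\tau\|_{\omega} : \tau \in [\alpha],\ \tilde\tau \in [\beta] \right\},
\]
so to prove the infimum equals $\infty$ it suffices to show that every term in the set being minimized equals $\infty$, provided the set is nonempty. Nonemptiness is immediate: by Lemma \ref{lem_s} there exist positive locally H\"older roof functions realizing any prescribed value of $s_{\infty}$, so both $[\alpha]$ and $[\beta]$ contain at least one element.

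First I would fix arbitrary $\tau \in [\alpha]$ and $\tilde\tau \in [\beta]$. Since $\alpha,\beta > 0$ we have $\tau,\tilde\tau \in \bigcup_{\gamma \in \mathbb{R}}[\gamma]$, so the hypotheses of Theorem \ref{ts} are available for this pair. Because $\alpha \neq \beta$ we have $s_{\infty}(\Phi_{\tau}) = \alpha \neq \beta = s_{\infty}(\Phi_{\tilde\tau})$, i.e.\ $[\tau] \neq [\tilde\tau]$. Theorem \ref{ts} asserts that $\|\tau-\tilde\tau\|_{\omega} < \infty$ forces $[\tau] = [\tilde\tau]$; since that conclusion fails here, we must have $\|\tau-\tilde\tau\|_{\omega} = \infty$.

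Finally I would note that $\tau \in [\alpha]$ and $\tilde\tau \in [\beta]$ were arbitrary, so the set $\{\|\tau-\tilde\tau\|_{\omega} : \tau\in[\alpha],\ \tilde\tau\in[\beta]\}$ consists solely of the value $+\infty$ (and is nonempty), hence its infimum is $+\infty$, which is exactly $dist([\alpha],[\beta]) = \infty$. There is no real obstacle here: the entire content lies in Theorem \ref{ts}, and the only point requiring a word is the nonemptiness of the classes, which Lemma \ref{lem_s} supplies.
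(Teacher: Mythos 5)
Your argument is correct and is exactly the derivation the paper intends: the corollary is the contrapositive of Theorem \ref{ts} applied to an arbitrary pair $\tau\in[\alpha]$, $\tilde\tau\in[\beta]$, with Lemma \ref{lem_s} guaranteeing the classes are nonempty so the statement is not vacuous. The paper offers no separate proof (it presents the corollary as an immediate consequence of Theorem \ref{ts}), and your write-up supplies precisely the routine details that were omitted.
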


Finally, from the last inequality in the proof of the Theorem \ref{ts}, we have that

\begin{corollary}
For every number $\alpha > 0,$ the sets $\mathcal{A}^{[\alpha]}_{cont}$ and $\mathcal{A}^{[\alpha]}_{disc}$ are open with respect to any $\omega$-topology.
\end{corollary}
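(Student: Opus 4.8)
The plan is to derive both openness statements, for a fixed $\alpha>0$, from the pointwise pressure estimate that already underlies the proof of Theorem~\ref{ts}. If $\tau,\tilde\tau\in\cS$ and $M:=\sup|\tau-\tilde\tau|<\infty$, then for every compact $\sigma$-invariant $K\subset\Sigma$ and every $t>0$ the standard continuity of topological pressure on $K$ (\cite[Theorem 9.7 (iv)]{wa}) gives $|P_K(-t\tau)-P_K(-t\tilde\tau)|\le tM$; taking the supremum over $K\in\cK$ and invoking the approximation property of the Gurevich pressure (Theorem~\ref{thm:pcms}) yields, for every $t>0$,
\[
P_\sigma(-t\tilde\tau)\le P_\sigma(-t\tau)+tM\qquad\text{and}\qquad P_\sigma(-t\tau)\le P_\sigma(-t\tilde\tau)+tM,
\]
with the usual conventions when one side is infinite. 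In particular $P_\sigma(-t\tau)<\infty$ if and only if $P_\sigma(-t\tilde\tau)<\infty$, so $s_\infty(\Phi_\tau)=s_\infty(\Phi_{\tilde\tau})$ whenever $\|\tau-\tilde\tau\|_\omega<\infty$ (this is also the content of Theorem~\ref{ts}).

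First I would fix $\alpha>0$ and $\tau\in\mathcal{A}^{[\alpha]}_{cont}$; the argument for $\mathcal{A}^{[\alpha]}_{disc}$ is identical. Choose any $\varepsilon>0$ and take $\tilde\tau\in B(\tau,\varepsilon)$, so $M:=\sup|\tau-\tilde\tau|\le\|\tau-\tilde\tau\|_\omega<\varepsilon$; by the previous paragraph $s_\infty(\Phi_{\tilde\tau})=\alpha$, i.e.\ $\tilde\tau\in[\alpha]$. It then suffices to show that $-\tilde\tau$ is of continuous type. Since $\tau>0$, monotonicity of $P_\sigma$ in the potential makes $t\mapsto P_\sigma(-t\tau)$ non-increasing, so $\lim_{t\to\alpha^+}P_\sigma(-t\tau)=\sup_{t>\alpha}P_\sigma(-t\tau)$ exists in $(-\infty,\infty]$, and likewise for $\tilde\tau$. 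Using monotonicity again one records that, for a roof function $\rho\in\cS$ with $s:=s_\infty(\Phi_\rho)\in(0,\infty)$, the potential $-\rho$ is of continuous type exactly when $\lim_{t\to s^+}P_\sigma(-t\rho)=\infty$ and of discontinuous type exactly when this limit is finite (the case $P_\sigma(-s\rho)<\infty$ already forces finiteness of the limit). Now apply the displayed estimate with $t\in(\alpha,\alpha+1)$, where $tM\le(\alpha+1)M$: taking the supremum over such $t$ and using the monotone limits gives $\lim_{t\to\alpha^+}P_\sigma(-t\tilde\tau)\le\lim_{t\to\alpha^+}P_\sigma(-t\tau)+(\alpha+1)M$ together with the reverse inequality, so the two limits are simultaneously finite or simultaneously infinite. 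Hence $-\tilde\tau$ is of continuous type, $\tilde\tau\in\mathcal{A}^{[\alpha]}_{cont}$, and $B(\tau,\varepsilon)\subset\mathcal{A}^{[\alpha]}_{cont}$; replacing ``infinite'' by ``finite'' throughout shows $\mathcal{A}^{[\alpha]}_{disc}$ is open.

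The one delicate point is the bookkeeping at the endpoint $t=\alpha$: the estimate from Theorem~\ref{ts} is pointwise in $t$ and its error $tM$ does not shrink, so it does not yield continuity of $t\mapsto P_\sigma(-t\tilde\tau)$ up to $\alpha$ --- only that the limiting value is finite for $\tilde\tau$ precisely when it is finite for $\tau$. What saves the argument is that the continuous/discontinuous dichotomy depends solely on that finiteness; accordingly one must phrase the type of $-\tilde\tau$ through $\lim_{t\to\alpha^+}P_\sigma(-t\tilde\tau)$ rather than through $P_\sigma(-\alpha\tilde\tau)$, whose value is not controlled by the estimate. Everything else --- the identification of the class $[\alpha]$, the monotonicity of the pressure function, and the comparison inequality itself --- is routine given the results already established above.
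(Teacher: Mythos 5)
Your proof is correct and takes essentially the same route as the paper: the paper's one-line proof simply invokes the last inequality in the proof of Theorem~\ref{ts} (the comparison $|P_\sigma(-t\tau)-P_\sigma(-t\tilde{\tau})|\le tM$ with $M=\sup|\tau-\tilde{\tau}|\le\|\tau-\tilde{\tau}\|_\omega$), and you apply exactly that estimate for $t$ near $\alpha$, together with monotonicity of $t\mapsto P_\sigma(-t\tau)$, to conclude that finiteness of $\lim_{t\to\alpha^+}P_\sigma(-t\,\cdot\,)$ --- which is precisely what distinguishes continuous from discontinuous type --- is preserved whenever $\|\tau-\tilde{\tau}\|_\omega<\infty$. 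Your explicit bookkeeping at the endpoint $t=\alpha$ is a correct elaboration of what the paper leaves implicit.
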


We conclude this sub-section describing the topological properties of the sets  having measures of maximal entropy $\mathcal{C}$ and not having such a measure $\mathcal{N}$ when restricted to the classes $\mathcal{A}_{cont}$ and $\mathcal{A}_{disc}.$

\begin{proposition}  With respect to any $\omega$-topology we have that
\begin{enumerate}
\item The set $\mathcal{C} \cap \mathcal{A}_{cont}$  is open in $\mathcal{A}_{cont}$ and $\mathcal{C} \cap \mathcal{A}_{disc}$
is not open $\mathcal{A}_{disc}.$
\item The set $\mathcal{N} \cap \mathcal{A}_{cont}$ is empty and $\mathcal{N} \cap \mathcal{A}_{disc}$ is not open in $\mathcal{A}_{disc}.$.
\end{enumerate}
\end{proposition}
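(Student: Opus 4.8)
The plan is to treat $\mathcal{A}_{cont}$ and $\mathcal{A}_{disc}$ separately, reading off membership in $\mathcal{C}$ or $\mathcal{N}$ from the behaviour of $t\mapsto P_{\sigma}(-t\tau)$ near $s_{\infty}$ via Theorems \ref{bip} and \ref{thm_eqstate}. For the continuous class I would prove the stronger statement $\mathcal{A}_{cont}\subseteq\mathcal{C}$, which gives both assertions about $\mathcal{A}_{cont}$ at once. Fix $\tau\in\mathcal{A}_{cont}$ with $s_{\infty}(\Phi_{\tau})=\alpha$. By definition of continuous type, $P_{\sigma}(-t\tau)=\infty$ for $t\le\alpha$, $\lim_{t\to\alpha^{+}}P_{\sigma}(-t\tau)=+\infty$, and $t\mapsto P_{\sigma}(-t\tau)$ is real analytic and strictly decreasing on $(\alpha,\infty)$; since $\tau$ is bounded away from $0$ on the full shift, the Kempton bound and the ergodic-optimization input used in the proof of Theorem \ref{full} give $P_{\sigma}(-t\tau)\to-\infty$. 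Hence there is a unique $t^{*}>\alpha$ with $P_{\sigma}(-t^{*}\tau)=0$, so $h(\Phi_{\tau})=t^{*}<\infty$ and $\tau\in\mathcal{F}$; and since $t^{*}>\alpha=s_{\infty}$, Theorem \ref{bip} produces a unique equilibrium measure $\nu$ for $-t^{*}\tau$, with $\int\tau\,d\nu=-\tfrac{d}{dt}P_{\sigma}(-t\tau)\big|_{t^{*}}<\infty$ by analyticity at $t^{*}$, so the first case of Theorem \ref{thm_eqstate} yields a measure of maximal entropy. Thus $\mathcal{C}\cap\mathcal{A}_{cont}=\mathcal{A}_{cont}$, which is open in $\mathcal{A}_{cont}$, and $\mathcal{N}\cap\mathcal{A}_{cont}=\emptyset$.

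For the discontinuous class the key fact is that, for $\tau\in\mathcal{A}_{disc}$ with $s_{\infty}=\alpha$, one has $h(\Phi_{\tau})=\alpha$ exactly when $L:=\lim_{t\to\alpha^{+}}P_{\sigma}(-t\tau)\le 0$, and in the borderline case $L=0$ a measure of maximal entropy persists or fails according as the Bernoulli equilibrium measure at $s_{\infty}$ does or does not integrate $\tau$ --- precisely the right-derivative dichotomy of Lemma \ref{Lem_disc}\,(\ref{lem11_4}). To see $\mathcal{C}\cap\mathcal{A}_{disc}$ is not open, I would exhibit a point of it at this border. Take $\tau_{1}:=-\varphi_{\theta_{k,\gamma},k,\gamma}$ of Lemma \ref{Lem_disc} with $k\ge 2$ and $\gamma=k+2$; then $\gamma A(k+2,2)=(k+2)/\log(k+2)>1$, so by continuity of $A(k+2,\cdot)$ one has $B(k+2,\gamma)>2$ and hence $\theta_{k,\gamma}>2$. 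By Lemma \ref{Lem_disc}, $s_{\infty}(\Phi_{\tau_{1}})=1$ and $P_{\sigma}(-\tau_{1})=0<\infty$ (while $P_{\sigma}(-s\tau_{1})=\infty$ for $s<1$), so $-\tau_{1}$ is of discontinuous type, $\tau_{1}\in\mathcal{A}_{disc}$, $L=0$ and $h(\Phi_{\tau_{1}})=1$. The Bernoulli equilibrium measure $\nu_{1}$ for $-\tau_{1}$ is well defined (its Dirichlet series sums to $e^{P_{\sigma}(-\tau_{1})}=1$) and satisfies $\int\tau_{1}\,d\nu_{1}=-\tfrac{d}{ds}P_{\sigma}(s\varphi_{\theta_{k,\gamma},k,\gamma})\big|_{s=1^{+}}<\infty$ by Lemma \ref{Lem_disc}\,(\ref{lem11_4}), so the first case of Theorem \ref{thm_eqstate} gives a measure of maximal entropy and $\tau_{1}\in\mathcal{C}$. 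Now set $\tau_{2}:=\tau_{1}+\epsilon$. Adding a constant shifts the pressure by a constant, $P_{\sigma}(-t\tau_{2})=P_{\sigma}(-t\tau_{1})-t\epsilon$, so $s_{\infty}(\Phi_{\tau_{2}})=1$ and $P_{\sigma}(-\tau_{2})=-\epsilon<\infty$, whence $-\tau_{2}$ is again of discontinuous type; but $P_{\sigma}(-t\tau_{2})<0$ for all $t\ge 1$, so $h(\Phi_{\tau_{2}})=1$ with $P_{\sigma}(-h(\Phi_{\tau_{2}})\tau_{2})=-\epsilon\ne 0$, and Theorem \ref{thm_eqstate} then rules out a measure of maximal entropy: $\tau_{2}\in\mathcal{N}\cap\mathcal{A}_{disc}$. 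Since both roof functions are constant on cylinders of length one, $\|\tau_{1}-\tau_{2}\|_{\omega}=\epsilon$ for every $\omega$, so letting $\epsilon\downarrow 0$ shows $\tau_{1}$ is not interior to $\mathcal{C}\cap\mathcal{A}_{disc}$.

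Non-openness of $\mathcal{N}\cap\mathcal{A}_{disc}$ is the mirror image. I would take $\tau_{1}:=-\varphi_{\theta_{k,\gamma},k,\gamma}$ with $k\ge 2$ and $\gamma\le\log(k+1)$ (e.g.\ $k=2$, $\gamma=1$); then for every $\theta\ge 2$ one has $(\theta-1)(\log(k+1))^{\theta-1}\ge\log(k+1)\ge\gamma$, so $\gamma A(k+1,\theta)\le 1$, forcing $B(k+1,\gamma)\le 2$ and hence $1<\theta_{k,\gamma}<2$. As before $P_{\sigma}(-\tau_{1})=0$, $h(\Phi_{\tau_{1}})=s_{\infty}=1$, $\tau_{1}\in\mathcal{A}_{disc}$, but now $\int\tau_{1}\,d\nu_{1}=+\infty$ by Lemma \ref{Lem_disc}\,(\ref{lem11_4}); and since $-\tau_{1}$ is positive recurrent at $s_{\infty}=1$ (its Dirichlet series converges there, so the Ruelle--Perron--Frobenius measure is finite) the second case of Theorem \ref{thm_eqstate} is vacuous too, so there is no measure of maximal entropy and $\tau_{1}\in\mathcal{N}\cap\mathcal{A}_{disc}$. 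The perturbation $\tau_{2}:=\tau_{1}-\epsilon$ is a genuine roof function for $\epsilon$ small (as $\tau_{1}$ is bounded away from $0$), is again of discontinuous type, and satisfies $P_{\sigma}(-\tau_{2})=\epsilon>0$ while $P_{\sigma}(-t\tau_{2})\to-\infty$ (again because $\tau_{2}$ is bounded away from $0$, cf.\ the proof of Theorem \ref{full}); hence the pressure has a zero at some $t^{*}>1=s_{\infty}$, $h(\Phi_{\tau_{2}})=t^{*}$, and a measure of maximal entropy sits at the analytic point $t^{*}$ by the first case of Theorem \ref{thm_eqstate}, so $\tau_{2}\in\mathcal{C}\cap\mathcal{A}_{disc}$. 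Again $\|\tau_{1}-\tau_{2}\|_{\omega}=\epsilon$, so $\tau_{1}$ is not interior to $\mathcal{N}\cap\mathcal{A}_{disc}$.

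The two delicate points are: first, establishing for continuous type that the zero of the pressure lies strictly beyond $s_{\infty}$ --- so that the equilibrium measure sits at an analytic point of the pressure and therefore automatically has finite $\tau$-integral; this leans on Kempton's lower bound for $\tau$ over the full shift and on the ergodic-optimization input already used in Theorem \ref{full}. Second, the bookkeeping with Theorem \ref{thm_eqstate} exactly at $t=s_{\infty}$ in the discontinuous case: one must identify the relevant candidate equilibrium measure with the Bernoulli measure furnished by the convergent Dirichlet series, and verify that $-\tfrac{d}{ds}P_{\sigma}(s\varphi)\big|_{s=1^{+}}$ is exactly its $\tau$-integral, so that Lemma \ref{Lem_disc}\,(\ref{lem11_4}) literally decides $\mathcal{C}$ versus $\mathcal{N}$. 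Once these are settled, the only genuine computation is placing $\theta_{k,\gamma}$ above or below $2$ by comparing $\gamma$ with $\log(k+2)$, respectively $\log(k+1)$, at $\theta=2$, and the constant perturbation $\tau\mapsto\tau\pm\epsilon$ does the rest, since it moves $\|\cdot\|_{\omega}$ by exactly $\epsilon$ while preserving both $s_{\infty}$ and the continuity type.
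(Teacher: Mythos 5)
Your proof is correct, and while it follows the same basic template as the paper (reduce the continuous case to $\mathcal{A}_{cont}\subseteq\mathcal{C}$, and in the discontinuous case perturb the roof functions of Lemma \ref{Lem_disc} by a constant $\epsilon$, which moves $\|\cdot\|_{\omega}$ by exactly $\epsilon$ while fixing $s_{\infty}$ and the continuity type), it differs from the paper's argument in ways that actually repair it. First, the paper uses the single witness $\tau=-\varphi_{\theta_{k,1},k,1}$ and asserts $\tau\in\mathcal{C}\cap\mathcal{A}_{disc}$; but with $\gamma=1$ one has $\theta_{k,1}<B(k+1,1)\le 2$, so by Lemma \ref{Lem_disc}\,(\ref{lem11_4}) the equilibrium measure for $-\tau$ has infinite $\tau$-integral and $\tau$ in fact lies in $\mathcal{N}$ --- exactly as the paper itself observes in the paragraph following the proposition. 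Your choice $\gamma=k+2$, forcing $\theta_{k,\gamma}>B(k+2,\gamma)>2$, is what is needed to manufacture a genuine point of $\mathcal{C}\cap\mathcal{A}_{disc}$ sitting at the critical parameter. Second, the paper computes $P_{\sigma}(-(\tau-\epsilon))=-\epsilon$, whereas subtracting $\epsilon$ from the roof function raises the pressure: $P_{\sigma}(-(\tau-\epsilon))=P_{\sigma}(-\tau)+\epsilon=\epsilon>0$, which sends $\tau-\epsilon$ into $\mathcal{C}$ (the zero of the pressure moves to an analytic point $t^{*}>s_{\infty}$), not into $\mathcal{N}$; your $\tau_{2}=\tau_{1}+\epsilon$ is the perturbation that produces $P_{\sigma}(-\tau_{2})=-\epsilon<0$ and hence a point of $\mathcal{N}$. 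Third, non-openness of a set requires a \emph{fixed} point of that set with complementary points arbitrarily close, so the two claims really do need two separate witnesses; your two constructions (a $\mathcal{C}$-point with $\gamma=k+2$ perturbed into $\mathcal{N}$, and an $\mathcal{N}$-point with $\gamma=1$ perturbed into $\mathcal{C}$) supply both, where the paper's single pair only directly witnesses one. The only places where you should add a line of justification are the identification $\int\tau\,d\nu=-\tfrac{d}{ds}P_{\sigma}(s\varphi)\big|_{s=1^{+}}$ at the critical exponent (differentiating the convergent Dirichlet series term by term against the Bernoulli--Gibbs measure), and the fact, already used in Theorem \ref{full}, that $P_{\sigma}(-t\tau)\to-\infty$ because $\tau$ is bounded away from zero on the full shift; both are exactly the computations the paper itself relies on in Lemma \ref{Lem_disc}, so nothing new is required.
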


\begin{proof}
From the definition of $\mathcal{A}_{cont}$ is clear that $\mathcal{C} \cap \mathcal{A}_{cont}$  is open in $\mathcal{A}_{cont}$
and that $\mathcal{N} \cap \mathcal{A}_{cont}= \emptyset$.
The proofs that the set $\mathcal{C} \cap \mathcal{A}_{disc}$
is not open $\mathcal{A}_{disc}$ and that the set $\mathcal{N} \cap \mathcal{A}_{disc}$ is not open in $\mathcal{A}_{disc}$ follow from Lemma \ref{Lem_disc} and the observations that for every $\theta>1,k\geq 2,$ given   $\epsilon>0$ small enough, we have that $\tau:=-\varphi_{\theta_{k,1},k,1}>0$ and $\tau-\epsilon>0,$ moreover $[\tau]=[\tau-\epsilon]=[1],$  
$ P_{\sigma}(-\tau)=0,$ $P_{\sigma}(-(\tau-\epsilon))=-\epsilon<0$ and $\left\| \tau- (\tau-\epsilon) \right\|_{\omega}=\epsilon.$ As $\tau\in \mathcal{C} \cap \mathcal{A}_{disc}$ and $\tau-\epsilon\in \mathcal{N} \cap \mathcal{A}_{disc},$ this implies that there are elements of $\mathcal{N} \cap \mathcal{A}_{disc}$ arbitrarily close to $\mathcal{C} \cap \mathcal{A}_{disc},$ which concludes the proof.

\end{proof}

As observed in Theorem \ref{thm_eqstate} a suspension flow $(Y, \Phi)$ with base the full-shift and roof function $\tau$ can fail to have a measure of maximal entropy for two reasons. One, which we have already studied, is that $P_{\sigma}(-h(\Phi)\tau) \neq 0$.
The other is that despite the fact that  $P_{\sigma}(-h(\Phi)\tau) = 0$, the Gibbs measure $\nu$ corresponding to $-h(\Phi)\tau$ is such that $\int h(\Phi)\tau d \nu = \infty$. This means that for any $\epsilon>0,$ the derivative of the map $t\mapsto P_{\sigma}(-t \tau)$ is unbounded in the interval $(h(\Phi),h(\Phi)+\epsilon].$ The roof functions studied in Lemma \ref{Lem_disc} provide examples of this type. For example, when $k=2 $ and $\gamma =1,$ we have that $1<\theta_{2,1}<2,$ and $\tau:=-\varphi_{\theta_{2,1},2,1}>0$ satisfies $h_{top}(\Phi_{\tau})=1,$ $P_{\sigma}(-\tau)=0$ and there exists a unique equilibrium measure $\nu$ for $-\tau$ with $\int \tau d \nu=\infty.$

\subsection{The renewal shift case}
We know consider another countable Markov shift. For the alphabet $\N\cup\{0\}$,
consider the
transition matrix $T=(t_{ij})_{i,j \in \N\cup\{0\}}$ with $t_{0,0}= t_{0,n}=
t_{n,n-1}=1$ for each $n \ge 1$ and with all other entries equal to
zero. The \emph{renewal shift} is the Markov shift
$(\Sigma_R, \sigma)$ defined by the transition matrix $T$, that~is,
the shift map $\sigma$ on the space
\[
\Sigma_R = \left\{ (x_i)_{i \ge 0} : x_i \in \N\cup\{0\} \text{ and } a_{x_i
x_{i+1}}=1 \text{ for each } i \geq 0\right\}.
\]
This shift has entropy equal to $\log 2$. The pressure function in this context is also very well understood. Indeed, Sarig \cite{Sar01a} proved the following (the version of this result for every $t \in \R$, appears in \cite[Proposition 3]{bi2}).

\begin{proposition}\label{gene-sar}
Let $(\Sigma_R, \sigma)$ be the renewal shift. For each bounded
$\phi \in \mathcal{R}$ there exist $t_{c}^+ \in (0, +\infty]$ and
 such that:
\begin{enumerate}
\item $t \mapsto P_\sigma(t\phi)$ is strictly convex and real analytic in~$(0,
t_{c}^{+})$.
\item
$P_G(t\phi)=Mt$ for
$t>t_{c}^{+}$. $M:= \sup \left\{ \int_{\Sigma}\phi \, d \mu : \mu \in
\mathcal{M} \right\}.$
\item At  $t_{c}^{+}$ the function $q \mapsto P_\sigma(t\phi)$
is continuous but not analytic.
\item For each $t \in (0,t_c^+)$ there is a unique equilibrium measure
$\mu_{t}$ for~$t\phi$.
\item For each $t >t_c^+$ there
is no equilibrium measure for $t\phi$.
\end{enumerate}
\end{proposition}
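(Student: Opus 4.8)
The plan is to reduce the statement to Sarig's first-return (loop) analysis, which for the renewal shift is especially transparent, and then read off each item. First I would induce the shift on the cylinder $C_0$. From the transition matrix one sees that for every $m\ge 1$ there is exactly one first-return word to $C_0$ of length $m$, namely $0\,(m-1)\,(m-2)\cdots(1)$; hence the first-return map is topologically conjugate to the full shift on the alphabet $\N$, and we may let $x_m\in C_0$ be the unique periodic point of period $m$ tracing the $m$-th loop, with loop weight $Z_m(t):=\exp\big(\sum_{i=0}^{m-1}(t\phi)(\sigma^i x_m)\big)$. For bounded $\phi\in\mathcal R$ the induced potential stays in the regularity class on the full shift, so Sarig's inducing formula (\cite{sa1,Sar01a}) expresses the Gurevich pressure via the generating function $\Lambda(t,z):=\sum_{m\ge1}Z_m(t)z^m$: with $z=e^{-p}$,
\[
P_\sigma(t\phi)=\inf\{\,p\in\R:\ \Lambda(t,e^{-p})\le 1\,\},
\]
and $t\phi$ is \emph{positive recurrent} precisely when the value of $\Lambda(t,\cdot)$ at the boundary of its disk of convergence (possibly $+\infty$) exceeds $1$; in that case $P_\sigma(t\phi)=-\log z_\ast(t)$ where $z_\ast(t)$ is the unique solution of $\Lambda(t,z)=1$ inside the disk.

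Next I would analyze $\Lambda$. Its radius of convergence is $e^{-p_\infty(t)}$ with $p_\infty(t)=\limsup_m\tfrac1m\log Z_m(t)=t\limsup_m\int\phi\,d\mu_m$, where $\mu_m\in\M_\sigma$ is the periodic measure on the $m$-th loop. Since $\int\phi\,d\mu_m\le M:=\sup\{\int\phi\,d\mu:\mu\in\M_\sigma\}$ we get $p_\infty(t)\le tM$; conversely, whenever $t\phi$ is not positive recurrent one has $P_\sigma(t\phi)=p_\infty(t)$, and the variational principle gives $P_\sigma(t\phi)\ge tM$, so $p_\infty(t)=tM$ there. Let $t_c^+$ be the supremum of the $t>0$ for which $t\phi$ is positive recurrent. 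At $t=0$ one has $\Lambda(0,z)=z/(1-z)$, whose value-$1$ point is $z=\tfrac12$, recovering $P_\sigma(0)=\log 2$ and showing $t\phi$ is positive recurrent for $t$ near $0$; analysing the boundary value $\Lambda(t,e^{-tM})$ in $t$ (log-convexity of $t\mapsto\Lambda(t,\cdot)$ together with the monotone behaviour of the loop-averages $\int\phi\,d\mu_m$) shows that the positive-recurrent parameters form an interval $(0,t_c^+)$ with $t_c^+\in(0,+\infty]$. For $t>t_c^+$ this already gives $P_\sigma(t\phi)=p_\infty(t)=Mt$, which is item (2).

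On $(0,t_c^+)$ I would obtain items (1) and (4). For analyticity: on $\{p>tM\}$ the series $\Lambda(t,e^{-p})$ and its $p$-derivative converge locally uniformly and are jointly real-analytic in $(t,p)$ (each $Z_m$ is log-affine in $t$); since $\partial_p\Lambda<0$ and $P_\sigma(t\phi)>tM$ in this range, the implicit function theorem applied to $\Lambda(t,e^{-P})=1$ makes $t\mapsto P_\sigma(t\phi)$ real-analytic. Convexity of $t\mapsto P_\sigma(t\phi)$ is the general convexity of pressure along the segment $\{t\phi\}$; strict convexity on $(0,t_c^+)$ holds because there the second derivative equals the asymptotic variance of $\phi$ with respect to the equilibrium state, which is positive unless $\phi$ is cohomologous to a constant (impossible for $\phi\in\mathcal R$, as that would make $P_\sigma(\cdot\,\phi)$ globally affine). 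Finally, positive recurrence together with the Ruelle--Perron--Frobenius theorem for countable Markov shifts (\cite{sa1,Sar01a}) yields an invariant Gibbs measure $\mu_{t\phi}$; since $\phi$ is bounded, $\int\phi\,d\mu_{t\phi}$ is finite and $\mu_{t\phi}$ is an equilibrium measure for $t\phi$, unique by Buzzi--Sarig \cite{busa}.

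It remains to treat items (3) and (5), and to flag the main difficulty. Continuity of $t\mapsto P_\sigma(t\phi)$ at $t_c^+$ is immediate from convexity and finiteness on $(0,\infty)$; non-analyticity there (when $t_c^+<\infty$) follows since an analytic function affine on $(t_c^+,\infty)$ would be affine throughout, contradicting strict convexity on $(0,t_c^+)$. For $t>t_c^+$ there is no equilibrium measure: by the dichotomy for equilibrium states of summable-variation potentials with finite pressure, the existence of one would force $t\phi$ to be positive recurrent, which fails in this range; alternatively, an equilibrium $\mu$ would satisfy $h(\mu)+t\int\phi\,d\mu=tM$ with $\int\phi\,d\mu\le M$ and $h(\mu)\ge 0$, forcing $\int\phi\,d\mu=M$ and $h(\mu)=0$, whereas the supremum defining $M$ is not attained on $\Sigma_R$ once $t_c^+<\infty$. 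The main obstacle is the second step: establishing that the positive-recurrent parameters form a single interval and that $p_\infty(t)=tM$ off it. This rests on the fine asymptotics of the renewal loop-averages $\int\phi\,d\mu_m$ and their relation to $M$ — precisely where Sarig's renewal-shift computation does the work — while the remaining steps are standard manipulations with the full-shift pressure formula and the positive-recurrence theory; see also \cite[Proposition 3]{bi2} for the extension to all $t\in\R$.
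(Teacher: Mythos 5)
The paper does not actually prove this proposition: it is quoted verbatim from Sarig \cite{Sar01a} (with the extension to all $t\in\R$ in \cite[Proposition 3]{bi2}), so there is no in-text argument to compare yours against. What you have written is a reconstruction of Sarig's original proof, and the skeleton is the right one: induce on $C_0$, observe that the renewal shift has exactly one first-return loop of each length $m$ so the induced system is the full shift on $\N$, and read everything off the generating function $\Lambda(t,z)=\sum_m Z_m(t)z^m$ together with the inducing formula $P_\sigma(t\phi)=\inf\{p:\Lambda(t,e^{-p})\le 1\}$. One remark: the step you single out as the main obstacle (that the positive-recurrent parameters form an interval and $p_\infty(t)=tM$ off it) is in fact the easiest part of the argument. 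Writing $d(t):=\Lambda(t,e^{-tM})=\sum_m\exp\bigl(tm(\smallint\phi\,d\mu_m-M)\bigr)$, every term is non-increasing in $t>0$ because $\smallint\phi\,d\mu_m\le M$, so $d$ is non-increasing, $d(0^+)=\infty$, and $\{t>0:d(t)>1\}$ is an interval $(0,t_c^+)$; on it $P_\sigma(t\phi)>tM\ge p_\infty(t)$ and the implicit function theorem applies, off it the inducing formula gives $P_\sigma(t\phi)=p_\infty(t)=tM$ exactly as you say.

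The genuine soft spots are elsewhere. First, for $\phi\in\mathcal{R}$ that is not locally constant, $\Lambda$ built from periodic-orbit weights only represents the induced pressure up to bounded multiplicative distortion, so the equation $\Lambda(t,e^{-p})=1$ does not literally locate $P_\sigma(t\phi)$; one must run the argument with the induced Gurevich pressure $p\mapsto P(\overline{t\phi}-p\tau)$ itself (which is still decreasing, convex and analytic where finite), not with its periodic-orbit surrogate. Second, your strict-convexity argument is circular as stated: the variance vanishes precisely when $\phi$ is cohomologous to a constant, and nothing in "bounded $\phi\in\mathcal{R}$" as reproduced here visibly excludes that case (for constant $\phi$ the pressure is globally affine and items (1) and (3) fail); this non-degeneracy is part of how Sarig's class $\mathcal{R}$ is set up and has to be invoked, not derived. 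Third, in item (5) the claim that the supremum defining $M$ is not attained by a zero-entropy measure when $t_c^+<\infty$ is asserted rather than proved; the cleaner route is the one you mention first, namely that an equilibrium measure for a summable-variations potential with finite pressure forces positive recurrence (Buzzi--Sarig \cite{busa} plus \cite{sa1}), which fails for $t>t_c^+$, though one should also dispose of the null-recurrent boundary behaviour at $t=t_c^+$ before concluding. With those three points repaired, your outline is a faithful account of the cited proof.
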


We consider now the set $\mathcal{S}$ of suspension flows with renewal shift for base.
These flows are known as \emph{renewal flows} and have been studied in \cite[Section 6]{ijt}. There exists renewal flows with infinite entropy (see  \cite[Example 6.4]{ijt}). Again, we call  $\mathcal{F} \subset \cS$ the flows with finite entropy and $\mathcal{I} \subset \cS$ those with infinite entropy. 

\begin{lemma} \label{lem_A}
The set  $\mathcal{F}$ contains an open set.
\end{lemma}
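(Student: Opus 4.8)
The plan is to exhibit an explicit renewal flow with finite entropy and show that an entire $\omega$-ball around its roof function stays inside $\mathcal{F}$. First I would recall that, by Proposition \ref{gene-sar} applied to $\phi = -\tau$, the pressure map $t \mapsto P_\sigma(-t\tau)$ for a bounded (more precisely, locally H\"older and bounded) roof function $\tau$ behaves eventually like $-Mt$ where $M = \sup\{\int \tau \, d\mu : \mu \in \M_\sigma\} \geq \essinf \tau > 0$; hence it eventually becomes negative, so every renewal flow with a \emph{bounded} locally H\"older roof function automatically has finite entropy by Corollary \ref{entropia}. Thus $\mathcal{F}$ contains the set of all $\tau \in \cS$ that are bounded above. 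So the task reduces to producing one bounded $\tau \in \cS$ (i.e.\ positive, locally H\"older, with $\sum_{i}\tau(\sigma^i x) = \infty$ — on the renewal shift, unlike the full shift, a bounded roof function is perfectly admissible since the divergence of the Birkhoff sums is automatic for a positive bounded function) and then checking that a small $\omega$-perturbation of it remains bounded.

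The second step is the perturbation argument, which is where one must be slightly careful but which is genuinely short. Take, for instance, $\tau \equiv 1$ (constant roof function): it is trivially positive, locally H\"older with all $V_n(\tau) = 0$, and for every $x$, $\sum_{i=0}^\infty \tau(\sigma^i x) = \infty$, so $\tau \in \cS$. For any $\omega$, $\|\tau\|_\omega = \sup|\tau| = 1$. Now if $\tau' \in \cS$ satisfies $\|\tau - \tau'\|_\omega < \tfrac12$, then in particular $\sup|\tau - \tau'| < \tfrac12$ (as noted in the paper, $\|\phi\|_\omega \geq \sup|\phi|$ always), so $\tau' \leq \tau + \tfrac12 = \tfrac32$ is bounded above, and $\tau' > \tfrac12 > 0$ is bounded away from zero. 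Hence $\tau'$ is a bounded locally H\"older positive function, so the associated renewal flow has finite entropy by the discussion above. Therefore $B(\tau, \tfrac12) \subset \mathcal{F}$, exhibiting an open subset of $\mathcal{F}$.

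For the pressure-negativity step I would spell out: since $\tau'$ is bounded, $-t\tau'$ is a bounded locally H\"older potential and Proposition \ref{gene-sar} gives $P_\sigma(-t\tau') = -Mt$ for $t > t_c^+$, with $M = \sup\{\int \tau' \, d\mu\} \geq \essinf \tau' \geq \tfrac12 > 0$; choosing any $t > \max\{t_c^+, 0\}$ one gets $P_\sigma(-t\tau') < 0$, whence $h(\Phi_{\tau'}) = \inf\{t : P_\sigma(-t\tau') \leq 0\} < \infty$ by Corollary \ref{entropia}. (Alternatively, one can bypass Proposition \ref{gene-sar} entirely: $P_\sigma(-t\tau') \leq P_\sigma(0) - t\,\essinf\tau' = \log 2 - t/2$, using that $\tau' \geq 1/2$ and the variational principle, which is $< 0$ once $t > 2\log 2$.) The main obstacle — really the only subtlety — is making sure the chosen base point $\tau$ actually lies in $\cS$, i.e.\ that the summability condition $\sum_i \tau(\sigma^i x) = \infty$ holds; for a positive bounded function on the renewal shift this is immediate, so in fact there is no serious obstacle, and the constant roof function is the cleanest witness. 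One should just be mindful that this easy boundedness trick works precisely because the renewal shift (unlike the full shift, by Kempton's result quoted earlier) admits bounded roof functions.
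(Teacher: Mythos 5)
Your argument is correct and is essentially the paper's: the paper likewise takes a roof function bounded away from zero by some $A>0$, bounds $P_{\sigma}(-t\tau)\le \log 2 - tA$ via the variational principle to get finite entropy, and observes that any $\tau'$ with $\|\tau-\tau'\|_{\omega}<\epsilon<A$ is still bounded below by $A-\epsilon>0$, hence also lies in $\mathcal{F}$. The only differences are cosmetic --- you fix the witness $\tau\equiv 1$ and first detour through Proposition \ref{gene-sar} (which needs boundedness above and whose linear regime is vacuous if $t_c^+=+\infty$) --- but your parenthetical direct bound $P_{\sigma}(-t\tau')\le\log 2 - t/2$ is exactly the paper's argument and renders that detour unnecessary.
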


\begin{proof}
Let $\tau$ be a roof function bounded away from zero. That is, there exists $A>0$ such that for every $x \in \Sigma_R$ we have $\tau(x)>A>0$. Therefore, for every $t \geq 0$
\begin{equation*}
P_{\sigma}(-t \tau) \leq \log(2) - t A.
\end{equation*}
In particular, there exists $h >0$ such that $P_{\sigma}(-h \tau ) =0$. Thus, the suspension flow with roof function $\tau$ has finite entropy. This readily implies that the set $\mathcal{F}$ contains an open set with respect to any  $\omega-$topology. Indeed, for every $\epsilon >0$ such that $A-\epsilon >0$ and any roof function $\tau_1$ for which $\|\tau - \tau_1\|_{\omega} < \epsilon$ we have that $\tau_1 > A - \epsilon >0$. Indeed, 
\begin{equation*}
\sup \left\{	|\tau(x) -\tau_1(x)| : x \in \Sigma_R		\right\} \leq     \|\tau - \tau_1 \|_{\omega} <  \epsilon.
\end{equation*}
Therefore the suspension flow with roof function $\tau_1$ has finite entropy.
\end{proof}

\begin{theorem}
Neither $\mathcal{F}$ nor $\mathcal{I}$ are open sets with respect to any $\omega-$topology.
 \end{theorem}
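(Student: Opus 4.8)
The plan is to exhibit, for each of the two sets, a roof function in the set that is approximated arbitrarily well in any $\|\cdot\|_\omega$-norm by roof functions in the complementary set. The key structural feature of the renewal shift that makes this possible (and that fails for the full shift) is that, by Proposition \ref{gene-sar}, for a bounded potential $\phi \in \mathcal{R}$ the pressure $t \mapsto P_\sigma(t\phi)$ becomes \emph{linear} with slope $M = \sup_\mu \int \phi \, d\mu$ for $t > t_c^+$; in particular, if $\sup\{\int(-\tau)\,d\mu : \mu \in \mathcal{M}\} = 0$, i.e. if $\tau$ is not bounded away from zero in the relevant averaged sense, the pressure function $t\mapsto P_\sigma(-t\tau)$ can stay nonnegative for all $t$, giving infinite entropy even though $\tau > 0$ everywhere. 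Conversely one can make tiny perturbations that push the asymptotic slope strictly below zero, forcing finite entropy.

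First I would handle $\mathcal{I}$ (infinite entropy) not being open. I would construct a roof function $\tau \in \mathcal{S}$ with $\tau > 0$ everywhere on $\Sigma_R$ but with $\inf \tau = 0$ along the renewal loop in such a way that $\sup\{\int \tau\, d\mu_n\} \to 0$ for a sequence of invariant measures, so that $\lim_{t\to\infty}\tfrac1t P_\sigma(-t\tau) = M = 0$ and $P_\sigma(-t\tau) \geq 0$ for all $t$, whence $h(\Phi) = \infty$ (cf.\ \cite[Example 6.4]{ijt}). Then for $\epsilon > 0$ set $\tau_\epsilon := \tau + \epsilon$. One checks $\|\tau - \tau_\epsilon\|_\omega = \epsilon$ since the difference is a constant (so all variations $V_n$ vanish). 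Now $P_\sigma(-t\tau_\epsilon) = P_\sigma(-t\tau) - t\epsilon$, so for every $t \geq 0$ the pressure of $-t\tau_\epsilon$ equals that of $-t\tau$ shifted down by $t\epsilon$; since $P_\sigma(-t\tau)$ has asymptotic slope $0$ and is finite for $t$ large enough (this requires a small argument: $\phi = -\tau \in \mathcal{R}$ is bounded, so Proposition \ref{gene-sar} applies and $P_\sigma(-t\tau)$ is finite for $t > 0$ provided $P_\sigma(-t\tau) < \infty$ for some $t$, which one arranges in the construction), we get $P_\sigma(-t\tau_\epsilon) \to -\infty$, hence there is $t_1$ with $P_\sigma(-t_1\tau_\epsilon) < 0$ and $\Phi_{\tau_\epsilon}$ has finite entropy. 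Thus $\tau_\epsilon \in \mathcal{F}$ arbitrarily close to $\tau \in \mathcal{I}$, so $\mathcal{I}$ is not open.

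Next I would show $\mathcal{F}$ is not open. Here I want $\tau \in \mathcal{S}$ with $\tau > 0$ and finite entropy (so $P_\sigma(-h\tau) = 0$ for some $h > 0$), but such that subtracting a small constant produces infinite entropy. The natural candidate is a $\tau$ for which the asymptotic slope $M = \sup\{\int(-\tau)\,d\mu\}$ of the pressure is exactly $0$: then $P_\sigma(-t\tau) = 0$ for $t$ large (the linear branch has slope $0$ and the function is continuous, decreasing where analytic, so it can flatten out at height $0$), giving finite entropy $h$; but $\tau_\epsilon := \tau - \epsilon$ (still positive if $\inf\tau > \epsilon$ — one must ensure $\tau$ is bounded below by a positive constant, which is compatible with asymptotic slope $0$ because the slope is governed by $\sup_\mu \int \tau\,d\mu$, not $\inf\tau$, and on the renewal shift the loop measures at $0$ can have small $\tau$-average only if... ) — this is the delicate point; see below. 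Granting it, $P_\sigma(-t\tau_\epsilon) = P_\sigma(-t\tau) + t\epsilon \geq t\epsilon > 0$ for all $t > 0$ is false in general, but on the linear branch $P_\sigma(-t\tau_\epsilon) = t\epsilon > 0$, and on the analytic branch one checks it stays $\geq 0$; so $h(\Phi_{\tau_\epsilon}) = \infty$. Again $\|\tau - \tau_\epsilon\|_\omega = \epsilon$, so $\mathcal{I}$-points accumulate on $\tau \in \mathcal{F}$ and $\mathcal{F}$ is not open.

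The main obstacle, and the step requiring real care, is the construction in the $\mathcal{F}$-not-open direction: one needs a single roof function $\tau$ that is simultaneously (i) bounded \emph{below} by a positive constant (so that $\tau - \epsilon$ is still an admissible roof function), (ii) locally Hölder, (iii) of finite entropy, yet (iv) with $\sup\{\int \tau \, d\mu : \mu \in \mathcal{M}_{\sigma_R}\}$ small enough — more precisely such that the linear branch of $t\mapsto P_\sigma(-t\tau)$ sits at height exactly $0$, equivalently $h(\Phi) = t_c^+$ and $P_\sigma(-t_c^+\tau) = 0$ with $M = 0$. Since on the renewal shift one has explicit control of the pressure via the generating-function/Sarig formalism (the loop at the vertex $0$ contributes $\sum_n e^{-t(\tau_0 + \cdots + \tau_{n-1})}$-type series), I would pick $\tau$ constant on cylinders with $\tau|_{C_n}$ chosen so that the first-return series to $C_0$ converges for $t$ past a critical value and the associated "slope'' vanishes; concretely one can borrow the discontinuous-type constructions of Lemma \ref{Lem_disc} transplanted to the renewal loop, e.g.\ return times $\sim \log(n \log^\theta n)$ with $\theta$ tuned to the critical case, while adding a harmless positive constant to ensure $\tau$ is bounded away from zero. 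Verifying that this $\tau$ indeed lies in $\mathcal{F}$ and that every constant downward shift lands in $\mathcal{I}$ is the heart of the argument; the perturbation estimate $\|\tau - (\tau\pm\epsilon)\|_\omega = \epsilon$ is immediate because constants have zero variation.
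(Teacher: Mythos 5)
Your treatment of $\mathcal{I}$ is correct and is even a slight shortcut over the paper's: for any $\tau\in\mathcal{I}$ and $\epsilon>0$ the function $\tau+\epsilon$ is a valid roof function with $\|\tau-(\tau+\epsilon)\|_{\omega}=\epsilon$, and since the renewal shift has Gurevich entropy $\log 2$ and $-t\tau\leq 0$ one gets $P_{\sigma}(-t(\tau+\epsilon))=P_{\sigma}(-t\tau)-t\epsilon\leq \log 2-t\epsilon<0$ for $t$ large, so $\tau+\epsilon\in\mathcal{F}$. (The paper instead uses a tail-splicing construction for both halves, but your constant upward shift is fine here.)

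The half concerning $\mathcal{F}$ has a genuine gap: the premise you ask the reader to grant is impossible. You want $\tau\in\mathcal{F}$ with $\inf\tau>\epsilon$ and with the linear branch of $t\mapsto P_{\sigma}(-t\tau)$ at height $0$, i.e.\ $M=\sup_{\mu}\int(-\tau)\,d\mu=0$. But $M=-\inf_{\mu}\int\tau\,d\mu\leq-\inf\tau$, so $M=0$ forces $\inf\tau=0$; the asymptotic slope is \emph{not} governed by $\sup_{\mu}\int\tau\,d\mu$ as you assert. Worse, any roof function bounded away from zero is an \emph{interior} point of $\mathcal{F}$ --- this is precisely the proof of Lemma \ref{lem_A}: if $\tau>A>0$ then $P_{\sigma}(-t\tau)\leq\log 2-tA$, and every $\tau'$ with $\|\tau-\tau'\|_{\omega}<\epsilon<A$ is still bounded below by $A-\epsilon$, hence also in $\mathcal{F}$. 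So no witness of the non-openness of $\mathcal{F}$ can be bounded away from zero, and therefore no perturbation of the form $\tau-\epsilon$ is admissible (it would fail to be positive). The paper's resolution is to perturb only on the tail rather than by a constant: take $\tau_2\in\mathcal{F}$ and $\tau_1\in\mathcal{I}$, both constant on $1$-cylinders with $\tau_i|C_n\to 0$, choose $N$ so that both are smaller than $\epsilon/2$ on $C_n$ for $n>N$, and let $\tau$ agree with $\tau_2$ on $C_0,\dots,C_N$ and with $\tau_1$ on the remaining cylinders. Then $\tau$ is still a positive roof function, $\|\tau-\tau_2\|_{\omega}=\sup|\tau-\tau_2|<\epsilon$, and the tail (which determines the behaviour of $P_{\sigma}(-t\tau)$ for large $t$) forces $P_{\sigma}(-t\tau)>0$ for all $t$, so $\tau\in\mathcal{I}$. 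Your argument needs this (or some equivalent tail modification) in place of the constant subtraction.
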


\begin{proof}
Let $\tau_1$ be a roof function such that it is constant in cylinders of length one, for every $t \in \R$ we have that $P_{\sigma}(-t \tau)>0$ and $\lim_{n \to \infty} \tau_1 |C_n=0$ (for an example see  \cite[Example 6.4]{ijt}). Let
 $\tau_2$ be a roof function constant in cylinders of length one with
 \begin{equation*}
P_{\sigma}(-t\tau_2)=
\begin{cases}
\text{positive}& \text{ if } t <1;\\
0 & \text{ if } t \geq 1
\end{cases}
\end{equation*}
and $\lim_{n \to \infty} \tau_2 |C_n=0.$
Examples of this type of functions were first constructed by Hofbauer \cite{ho} (other examples can be found in \cite{it,Sar01a}). The suspension flow associated to $\tau_1$ belongs to $\mathcal{I}$ and the one corresponding to $\tau_2$ to $\mathcal{F}$.

Let $\epsilon >0$ and consider $N \in \N$ such that for any $n >N$ we have that if $x \in C_n$ then
\[ 0\leq \tau_i(x)< \frac{\epsilon}{2} \text{ for } i=1,2.\]
Define a new roof function $\tau:\Sigma_R \to \R$ by
\begin{equation*}
\tau(x):=
\begin{cases}
\tau_2(x) & \text{ if } x \in C_n \text{ for } n \in \{0, 1, \dots N\}; \\
\tau_1(x) & \text{ if } x \in C_n  \text{ for } n > N.
\end{cases}
\end{equation*}
Note that for every $t \in \R$ we have that  $P_{\sigma}(-t \tau) >0$. Indeed, the tail of $\tau$ determines the behaviour of the pressure for large values of $t$. 
Moreover
\[ \|\tau - \tau_2\|_{\omega} < \epsilon .\]
Therefore the set $\mathcal{F}$ is not open. An analogous construction shows that $\mathcal{I}$ is not open.
\end{proof}



\begin{thebibliography}{BolMc}


\bibitem[Ab]{a}   L. M. Abramov, \emph{On the entropy of a flow.} Dokl. Akad. Nauk SSSR \textbf{128} (1959) 873--875.


\bibitem[Al]{alzer2015}  
H. Alzer, \emph{The Hurwitz zeta function: monotonicity, convexity and inequalities.} Aequat. Math. \textbf{89} (2015) 1401--1414 .

\bibitem[AK]{ak}  W. Ambrose  and  S. Kakutani, \emph{Structure and continuity of measurable flows.} Duke Math. J.  \textbf{9}  (1942) 25--42.


\bibitem[AFU]{afu} A. Avila; G. Forni and C. Ulcigrai,  \emph{Mixing for time-changes of Heisenberg nilflows.} J. Differential Geom. 89 (2011), no. 3, 369--410.

\bibitem[BI1]{bi1} L. Barreira  and G. Iommi, \emph{Suspension flows over countable Markov shifts.}
Journal of Statistical Physics \textbf{124} no.1, 207--230 (2006).

\bibitem[BI2]{bi2} L. Barreira and G. Iommi, \emph{Multifractal analysis and phase transitions for hyperbolic and parabolic horseshoes,}  Israel J. Math. \textbf{181}  (2011), 347--379.

\bibitem[BRW]{brw}
L. Barreira, L. Radu and C. Wolf, \emph{Dimension of measures for suspension flows}, Dyn. Syst. \textbf{19} (2004), 89--107.



\bibitem[Bo1]{bo1}
R. Bowen, \emph{Symbolic dynamics for hyperbolic flows}, Amer. J.
Math. \textbf{95} (1973), 429--460.

%


\bibitem[BR]{br}
R. Bowen and D. Ruelle, \emph{The ergodic theory of Axiom A flows},
Invent. Math. \textbf{29} (1975), 181--202.
%
%

\bibitem[BG]{bg}  A.I. Bufetov and B.M. Gurevich, \emph{On a measure with maximum entropy for the Teichm\"uller flow on the moduli space of abelian differentials.} Funktsional. Anal. i Prilozhen. 42 (2008), no. 3, 75--77; translation in Funct. Anal. Appl. 42 (2008), no. 3, 224--226.


\bibitem[BS]{busa}
J. Buzzi and O. Sarig, \emph{Uniqueness of equilibrium measures for
countable Markov shifts and multidimensional piecewise expanding
maps}, Ergodic Theory Dynam. Systems \textbf{23} (2003), 1383--1400.

%



\bibitem[CS]{cs} V.\ Cyr and  O.\ Sarig, \emph{Spectral Gap and Transience for Ruelle Operators on Countable Markov Shifts,}
    Comm. Math. Phys. \textbf{292} (2009) 637--666.


\bibitem[DP]{dp}
F. Dal'bo and M. Peign\'e, \emph{Some negatively curved manifolds with cusps, mixing and counting.} Journal f\"ur die Reine und Angewandte Mathematik \textbf{497} (1998), 141--169.



\bibitem[Ei]{eie} 
M. Eie,  \emph{On a Dirichlet series associated with a polynomial,} Proc. Amer. Math. Soc. 110 (1990), no. 3, 583--590. 




\bibitem[FU]{fu} G. Forni and C. Ulcigrai, \emph{Time-changes of horocycle flows.} J. Mod. Dyn. 6 (2012), no. 2, 251Ð273.

\bibitem[GM]{gm}    K. Gelfert and  A. E. Motter  \emph{(Non)Invariance of dynamical quantities for orbit equivalent flows.}  Commun. Math. Phys. 300 (2010), 411--433.


\bibitem[GT]{Grabner_Thuswaldner96}  P.J. Grabner and J.M. Thuswaldner, \emph{Analytic continuation of a class of Dirichlet series.} Abh. Math. Sem. Univ. Hamburg 66 (1996), 281--287.


\bibitem[Gu1]{gu1} B.M.\ Gurevi\v c,
\emph{Topological entropy for denumerable Markov chains,}
Dokl. Akad. Nauk SSSR {\bf 10} (1969) 911--915.

\bibitem[Gu2]{gu2} B.M.\ Gurevi\v c,
\emph{Shift entropy and Markov measures in the path space of a denumerable graph,}
Dokl. Akad. Nauk SSSR {\bf 11} (1970) 744--747.


\bibitem[Ha]{h} U. Hamenst\"adt,  \emph{Symbolic dynamics for the Teichmueller flow.} Preprint available at http://www.math.uni-bonn.de/people/ursula/papers.html

\bibitem[Ho]{ho}  F. Hofbauer, \emph{Examples for the nonuniqueness of the equilibrium state.} Trans. Amer. Math. Soc. 228 (1977), no. 223--241.

\bibitem[IJ]{ij} G. Iommi and T. Jordan, \emph{Phase transitions for suspension flows.} Comm. Math. Phys. Physics 320, 475--498 (2013).

\bibitem[IJT]{ijt} G. Iommi, T. Jordan and M. Todd, \emph{Recurrence and transience for suspension flows } Israel Journal of Mathematics  vol. 209, Issue 2, 547--592 (2015).


\bibitem[IT]{it}  G. Iommi and M. Todd, \emph{Transience in dynamical systems.} Ergodic Theory Dynam. Systems 33 (2013), no. 5, 1450--1476.

\bibitem[IRV] {irv} G. Iommi, F. Riquelme and A. Velozo \emph{Entropy in the cusp and phase transitions for geodesic flows}  Israel J. Math. 225 (2018), no. 2, 609--659.

\bibitem[JKL]{jkl} J. Jaerisch, M. Kesseb\"ohmer and S. Lamei, \emph{Induced topological pressure for countable state Markov shifts}  Stoch. Dyn. 14 (2014), no. 2, 1350016, 31 pp.


\bibitem[JMU]{jmu} O. Jenkinson, R.D.  Mauldin and  M. Urba\'nski, \emph{Zero temperature limits of Gibbs-equilibrium states for countable alphabet subshifts of finite type.} J. Stat. Phys. 119 (2005), no. 3-4, 765--776.

\bibitem[KU]{ku} S. Katok and I. Ugarcovici, \emph{Symbolic dynamics for the modular surface and beyond.} Bull. Amer. Math. Soc. (N.S.) 44 (2007), no. 1, 87--132. 

\bibitem[Ke1]{keth} T. Kempton,\emph{Thermodynamic formalism for symbolic dynamical systems.} University if Warwick Ph.D thesis (2011).


\bibitem[Ke2]{ke} T. Kempton, \emph{Thermodynamic formalism  for suspension flows over  countable Markov shifts} Nonlinearity \textbf{24} (2011) 2763--2775.

%
%
%
%
%
  

\bibitem[LS]{ls}  Y. Lima and O. Sarig, \emph{Symbolic dynamics for three dimensional flows with positive topological entropy }
to appear in J. Eur. Math. Soc.

\bibitem[Ma]{ma}    B. Marcus,  \emph{Ergodic properties of horocycle flows for surfaces of negative curvature.} 
Ann. of Math. (2) 105 (1977), no. 1, 81--105. 

\bibitem[MU]{mu}  R. Mauldin and M. Urba\'nski,  \emph{Graph directed Markov systems: geometry and dynamics of limit sets,} Cambridge tracts in mathematics 148, Cambridge University Press, Cambridge 2003. 
  
%


\bibitem[P1]{par}  W. Parry, \emph{Synchronisation of canonical measures for hyperbolic attractors.} Comm. Math. Phys. 106 (1986), no. 2, 267--275.

\bibitem[P2]{par2}  W. Parry, \emph{Topics in ergodic theory.} Reprint of the 1981 original. Cambridge Tracts in Mathematics, 75. Cambridge University Press, Cambridge, 2004. x+110 pp.

\bibitem[PP]{pp}
W. Parry and M. Pollicott, \emph{Zeta Functions and the Periodic
Orbit Structure of Hyperbolics Dynamics}, Ast\'erisque 187--188,
1990.

%
%
%
%

\bibitem[Ra]{ra}
M.~Ratner, \emph{Markov partitions for Anosov flows on
$n$-dimensional manifolds}, Israel J. Math. \textbf{15} (1973),
92--114.

\bibitem[Sav]{sav}
S. Savchenko, \emph{Special flows constructed from countable
topological Markov chains}, Funct. Anal. Appl. \textbf{32} (1998),
32--41.



\bibitem[Sa1]{sa1}
O. Sarig, \emph{Thermodynamic formalism for countable Markov
shifts}, Ergodic Theory Dynam. Systems \textbf{19} (1999),
1565--1593.


 \bibitem[Sa2]{Sar01a} O. Sarig,  \emph{Phase transitions for countable Markov shifts,}  Comm. Math. Phys.  \textbf{217}  (2001), 555--577.

\bibitem[Sa3]{sar3000} O. Sarig, \emph{Existence of Gibbs measures for countable Markov shifts,} Proc. Amer. Math.
Soc. 131 (2003), 1751--1758.
%
%
\bibitem[Ti]{ti} R. Tiedra De Aldecoa,  \emph{Spectral analysis of time changes of horocycle flows.}  J. Mod. Dyn. 6 (2012), no. 2, 275--285. 
%

\bibitem[To]{to} H. Totoki, \emph{Time changes of flows.} Mem. Fac. Sci. Kyushu Univ. Ser. A 20 (1966) 27--55. 


\bibitem[Wa]{wa}
P.\ Walters, \emph{An Introduction to Ergodic Theory}, Graduate Texts
in Mathematics 79, Springer, 1981.


\bibitem[Zi]{zieger} E. Zeidler, \emph{Quantum Field Theory I: Basics in Mathematics and Physics. A Bridge Between Mathematicians and Physicists}, Springer-Verlag, Berlin-Heidelberg-New York, 2006.

\end{thebibliography}
\end{document}